\newtheorem{thm}{Theorem}[section]
\newtheorem{lemma}[thm]{Lemma}
\newtheorem{cor}[thm]{Corollary}
\newtheorem{prop}[thm]{Proposition}
\newtheorem{obs}[thm]{Observation}
\newtheorem{fact}{Fact}
\newtheorem{claim}{Claim}
\newtheorem*{claim*}{Claim}
\theoremstyle{definition}
\newtheorem*{AlgBasic}{The Basic Algorithm}
\newtheorem*{MainAlg}{The Main Algorithm}
\newtheorem{defn}[thm]{Definition}
\newcommand{\ds}{\displaystyle}
\newcommand{\G}{\mathcal{G}}
\newcommand{\C}{\mathbb{C}}
\newcommand{\Cs}{\mathbb{C}^*}
\newcommand{\Gh}{\hat{G}}
\newcommand{\SF}{\mathrm{SF}}
\newcommand{\chit}{\chi_{T}}
\newcommand{\rng}{\mathrm{range}}
\renewcommand{\Re}{\mathrm{Re}}
\def\B{\mathcal{B}}
\def\F{\mathcal{F}}
\def\HH{\mathcal{H}}
\def\P{\mathcal{P}}
\def\Q{\mathcal{Q}}
\def\N{\mathbb{N}}
\def\Pr{\mathbb{P}}
\def\ZZ{\mathbb{Z}}
\def\le{\leqslant}
\def\ge{\geqslant}
\def\eps{\varepsilon}
\def\<{\langle}
\def\>{\rangle}
\def\Bin{\textup{Bin}}
\title{Counting sum-free sets in Abelian groups}
\author{Noga Alon}
\address{School of Mathematical Sciences, Tel Aviv University, Tel Aviv 69978, Israel} \email{nogaa@post.tau.ac.il}
\author{J\'ozsef Balogh}
\address{Department of Mathematics, University of Illinois, 1409 W. Green Street, Urbana, IL 61801} \email{jobal@math.uiuc.edu}
\author{Robert Morris} 
\address{IMPA, Estrada Dona Castorina 110, Jardim Bot\^anico, Rio de Janeiro, RJ, Brasil} \email{rob@impa.br}
\author{Wojciech Samotij}
\address{School of Mathematical Sciences, Tel Aviv University, Tel Aviv 69978, Israel; and Trinity College, Cambridge CB2 1TQ, UK} \email{ws299@cam.ac.uk}
\thanks{Research supported in part by: (NA) ERC Advanced Grant DMMCA, 
a USA-Israeli BSF grant and the Israeli I-Core program; 
(JB) NSF CAREER Grant DMS-0745185, UIUC Campus Research Board Grant 11067, and OTKA Grant K76099; (RM) a CNPq bolsa de Produtividade em Pesquisa; (WS) ERC Advanced Grant DMMCA and a Trinity College JRF}
\begin{document}

\begin{abstract}
In this paper we study sum-free sets of order $m$ in finite Abelian groups. We prove a general theorem on 3-uniform hypergraphs, which allows us to deduce structural results in the sparse setting from stability results in the dense setting. As a consequence, we determine the typical structure and asymptotic number of sum-free sets of order $m$ in Abelian groups $G$ whose order is divisible by a prime $q$ with $q \equiv 2 \pmod 3$, for every $m \ge C(q) \sqrt{n \log n}$, thus extending and refining a theorem of Green and Ruzsa. In particular, we prove that almost all sum-free subsets of size $m$ are contained in a maximum-size sum-free subset of $G$. We also give a completely self-contained proof of this statement for Abelian groups of even order, which uses spectral methods and a new bound on the number of independent sets of size $m$ in an $(n,d,\lambda)$-graph.
\end{abstract}

\maketitle

\section{Introduction}

An important trend in Combinatorics in recent years has been the formulation and proof of various `sparse analogues' of classical extremal results in Graph Theory and Additive Combinatorics. Due to the recent breakthroughs of Conlon and Gowers~\cite{CG} and Schacht~\cite{Sch}, many such theorems, e.g., the theorems of Tur{\'a}n~\cite{Turan} and Erd{\H{o}}s and Stone~\cite{ErSt} in extremal graph theory, and the theorem of Szemer{\'e}di~\cite{SzAP} on arithmetic progressions, are now known to extend to sparse random sets. For structural and enumerative results, such as the theorem of Kolaitis, Pr{\"o}mel and Rotshchild~\cite{KoPrRo} which states that almost all $K_{r+1}$-free graphs are $r$-colorable, perhaps the most natural sparse analogue is a corresponding statement about subsets of a given fixed size $m$, whenever $m$ is not too small. In this paper, we prove such a result in the context of sum-free subsets of Abelian groups and provide a general framework for solving problems of this type. To be precise, we obtain a sparse analogue of a result of Green and Ruzsa~\cite{GR05}, which describes the structure of a typical sum-free subset of an Abelian group.

Sparse versions of classical extremal and Ramsey-type results were first proved for graphs by Babai, Simonovits and Spencer~\cite{BSS}, and for additive structures by Kohayakawa, {\L}uczak and R\"odl~\cite{KLR}, and in recent years there has been a tremendous interest in such problems (see, e.g,~\cite{FRRT,RR1,RR2}). Mostly, these results have been in the random setting; for example, Graham, R\"odl and Ruci\'nski~\cite{GRR} showed that if $p \gg 1/\sqrt{n}$, and $B \subseteq \ZZ_n$ is a $p$-random subset\footnote{A $p$-random subset of a set $X$ is a random subset of $X$, where each element is included with probability $p$, independently of all other elements.}, then with high probability every $2$-colouring of $B$ contains a monochromatic solution of $x + y = z$. The extremal version of this question was open for fifteen years, until it was recently resolved by Conlon and Gowers~\cite{CG} and Schacht~\cite{Sch}. 

For problems of the type we are considering, results are known only in a few special cases. Most notably, Osthus, Pr\"omel and Taraz~\cite{OPT}, confirming (and strengthening) a conjecture of Pr\"omel and Steger~\cite{PS96}, proved that if $m \ge \big( \frac{\sqrt{3}}{4} + \eps \big) n^{3/2} \sqrt{\log n}$ then almost all triangle-free graphs with $m$ edges are bipartite; moreover, the constant $\sqrt{3}/4$ is best possible. This result can be seen as a sparse version of the classical theorem of Erd\H{o}s, Kleitman and Rothschild~\cite{EKR}, which states that almost all triangle-free graphs are bipartite. A similarly sharp result was proved by Friedgut, R\"odl, Ruci\'nski and Tetali~\cite{FRRT} for the existence of monochromatic triangles in two-colourings of $G_{n,p}$. It is an interesting open problem to prove such a sharp threshold in the setting of Theorem~\ref{thm:groups}, below.

A set $A \subseteq G$, where $G$ is an Abelian group, is said to be \emph{sum-free} if $(A + A) \cap A = \emptyset$, or equivalently, if there is no solution to the equation $x + y = z$ with $x,y,z \in A$. Sum-free subsets of Abelian groups are central objects of interest in Additive Combinatorics, and have been studied intensively in recent years. The main questions are as follows: What are the largest sum-free subsets of $G$? How many sum-free sets are there? And what does a typical such set look like? Over forty years ago, Diananda and Yap~\cite{DY} determined the maximum density $\mu(G)$ of a sum-free set in $G$ whenever $|G|$ has a prime factor $q \not\equiv 1 \pmod 3$, but it was not until 2005 that Green and Ruzsa~\cite{GR05} completely solved this extremal question for all finite Abelian groups. On the second and third questions, Lev, \L uczak and Schoen~\cite{LLS} and Sapozhenko~\cite{Sap02} determined the asymptotic number of sum-free subsets in an Abelian group of even order by showing that almost all such sets\footnote{We say that almost all sets in a family $\F$ of subsets of $G$ satisfy some property $\P$ if the ratio of the number of sets in $\F$ that have $\P$ to the number of all sets in $\F$ tends to $1$ as $|G|$ tends to infinity.} lie in the complement of a subgroup of index~$2$. Green and Ruzsa~\cite{GR05} extended this result to Abelian groups which have a prime factor $q \equiv 2 \pmod 3$, and showed also that a general finite Abelian group $G$ has $2^{(1 + o(1))\mu(G)|G|}$ sum-free subsets.

We say that $G$ is of Type~I if $|G|$ has a prime divisor $q$ with $q \equiv 2 \pmod 3$, and Type~I($q$) if $q$ is the smallest such prime. Diananda and Yap~\cite{DY} proved that if $G$ is of Type~I($q$), then $\mu(G) = (q+1)/(3q)$; moreover, they described all sum-free subsets of $G$ with $\mu(G)|G|$ elements. Green and Ruzsa~\cite{GR05} determined the asymptotic number of sum-free subsets of an Abelian group $G$ of Type I, by showing that almost every sum-free set in $G$ is contained in some sum-free set of maximum size. Balogh, Morris and Samotij~\cite{BMS} studied $p$-random subsets of such groups, and showed that if $p \ge C(q) \sqrt{\log n / n}$, $G$ is an Abelian group of Type I($q$) and order $n$, and $G_p$ is a $p$-random subset of $G$, then with high probability every maximum-size sum-free subset of $G_p$  is contained in some sum-free subset of $G$ of maximum size. In the case $G = \ZZ_{2n}$, they showed that if $p \ge \big( \frac{1}{\sqrt{3}} + \eps \big) \sqrt{n \log n}$ and $A \subseteq G$ is a $p$-random subset, then with high probability the unique largest sum-free subset of is $A \cap O_{2n}$, where $O_{2n} \subseteq \ZZ_{2n}$ denotes the set of odd residues modulo $2n$. Moreover, the constant $1/\sqrt{3}$ in this result is best possible. 

Let us denote by $\SF(G,m)$ the collection of sum-free subsets of size $m$ in a finite Abelian group $G$. The following theorem refines the result of Green and Ruzsa~\cite{GR05} to sum-free sets of fixed size $m$, provided that $m \ge C(q) \sqrt{n \log n}$. 

\begin{thm}\label{thm:groups}
For every prime $q \equiv 2 \pmod 3$, there exists a constant $C(q) > 0$ such that the following holds. Let $G$ be an Abelian group of Type $I(q)$ and order $n$, and let $m \ge C(q) \sqrt{n \log n}$. Then almost every sum-free subset of $G$ of size $m$ is contained in a maximum-size sum-free subset of $G$, and hence
$$|\SF(G,m)| \, = \, \lambda_q  \cdot \left(\# \big\{ \text{elements of $G$ of order $q$} \big\} + o(1)\right) {\mu(G)n \choose m}$$ 
as $n \to \infty$, where $\lambda_q = 1$ if $q = 2$ and $\lambda_q = 1/2$ otherwise.
\end{thm}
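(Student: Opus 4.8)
The plan is to pass to a hypergraph formulation, invoke the general $3$-uniform hypergraph theorem with the dense stability theorem of Green and Ruzsa (which rests on the Diananda--Yap classification) as input, and then carry out an essentially combinatorial count. Let $H = H(G)$ be the $3$-uniform hypergraph on vertex set $G$ whose edges are the \emph{Schur triples} $\{x,y,z\}$ with $x + y = z$; the $O(n)$ degenerate solutions (e.g.\ $2x = y$ or $2x = x$) are dealt with by a minor modification and may be ignored. Then $A \subseteq G$ is sum-free precisely when $A$ is independent in $H$, so $\SF(G,m)$ is exactly the family of independent sets of $H$ of size $m$, and the task is to estimate its size.

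First I would verify the hypotheses of the general theorem for $H$. The co-degree bound is immediate: a pair $\{x,y\}$ extends to at most two Schur triples, the third element being one of $x+y$, $x-y$, $y-x$. The supersaturation input --- every $A \subseteq G$ with $|A| \ge (\mu(G)+\eps)n$ spans $\Omega_\eps(n^2)$ Schur triples --- follows from the solution of the dense extremal problem by a routine averaging argument. The key ingredient is a robust stability statement: if $A \subseteq G$ spans $o(n^2)$ Schur triples and $|A| \ge (\mu(G)-\eps)n$, then $|A \setminus B| \le \eps' n$ for some maximum-size sum-free set $B$; this is the stability form of the Green--Ruzsa theorem, again resting on Diananda--Yap. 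Feeding these into the general theorem gives that, provided $m \ge C(q)\sqrt{n\log n}$ with $C(q)$ large, almost every sum-free subset of $G$ of size $m$ is contained in a maximum-size sum-free subset of $G$. Internally this is a container argument: one produces a family of at most $\exp\big(O(\sqrt{n\log n})\big)$ containers for the sum-free sets, each either of size at most $(\mu(G)-\delta)n$ or within symmetric difference $\eps n$ of a maximum-size sum-free set, and the sum-free $m$-sets lying only in containers of the first type number at most $\exp\big(O(\sqrt{n\log n})\big)\cdot(1-\delta/\mu(G))^m\binom{\mu(G)n}{m} = o\big(\binom{\mu(G)n}{m}\big)$ --- it is here that the hypothesis $m \ge C(q)\sqrt{n\log n}$ is used.

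It then remains to count the sum-free $m$-sets contained in a maximum-size sum-free set. Let $\mathcal{B}$ be the family of maximum-size sum-free subsets of $G$, so $|B| = \mu(G)n$ for every $B \in \mathcal{B}$, and put $N = |\mathcal{B}|$. Writing $\binom{B}{m}$ for the family of $m$-element subsets of $B$, the previous step gives $|\SF(G,m)| = \big|\bigcup_{B\in\mathcal{B}}\binom{B}{m}\big| + o\big(\binom{\mu(G)n}{m}\big)$. By Diananda--Yap, distinct maximum-size sum-free subsets of $G$ intersect in at most $(1-c)\mu(G)n$ elements for an absolute $c > 0$, so inclusion--exclusion gives $\big|\bigcup_B \binom{B}{m}\big| = N\binom{\mu(G)n}{m} + O\big(n^2 (1-c)^m \binom{\mu(G)n}{m}\big)$, with error $o\big(\binom{\mu(G)n}{m}\big)$ since $m \ge C(q)\sqrt{n\log n}$. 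Hence $|\SF(G,m)| = \big(N + o(1)\big)\binom{\mu(G)n}{m}$.

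Finally I would identify $N$. By Diananda--Yap, the maximum-size sum-free subsets of a Type~$I(q)$ group $G$ are exactly the sets $\varphi^{-1}(I)$ with $\varphi\colon G\to\ZZ_q$ a surjective homomorphism and $I \subseteq \ZZ_q$ a maximum-size sum-free subset of $\ZZ_q$ (an interval of size $(q+1)/3$). Since $\ZZ_q^*$ acts freely on the surjections $G\to\ZZ_q$, hence on the pairs $(\varphi,I)$, and two pairs give the same subset of $G$ exactly when they share an orbit, and since $\ZZ_q$ has one maximum-size sum-free interval if $q=2$ and $(q-1)/2$ of them if $q>2$, we get $N = \lambda_q\cdot\#\{\text{surjections }G\to\ZZ_q\}$ with $\lambda_q$ as in the statement. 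A nonzero homomorphism $G\to\ZZ_q$ is automatically surjective ($q$ prime), so $\#\{\text{surjections }G\to\ZZ_q\} = |\Hom(G,\ZZ_q)| - 1 = |G[q]| - 1 = \#\{\text{elements of }G\text{ of order }q\}$, where $G[q] = \{g\in G: qg=0\}$; this gives the claimed formula. The main obstacle is the hypergraph step: proving the general theorem with a container family of size $\exp\big(O(\sqrt{n\log n})\big)$ (rather than merely $\exp(o(n))$) and establishing the robust stability statement in the form the container machinery needs. The threshold $m \ge C(q)\sqrt{n\log n}$ is essentially best possible for a structural result of this kind, since for $m = o(n^{1/3})$ a uniformly random $m$-subset of $G$ is sum-free with probability $1-o(1)$, so almost all sum-free $m$-sets are then \emph{not} contained in a maximum-size sum-free subset.
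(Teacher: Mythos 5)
Your overall architecture matches the paper's: formulate sum-free sets as independent sets in the Schur-triple hypergraph, verify the hypotheses of the general $3$-uniform hypergraph theorem (co-degree bound, supersaturation, Green--Ruzsa stability), invoke it, and then count via Diananda--Yap and inclusion--exclusion. The final identification of $N = |\SF_0(G)|$ via orbits of $\ZZ_q^*$ acting on pairs $(\varphi, I)$ and the passage from surjections to elements of order $q$ is correct and essentially the same bookkeeping the paper carries out.

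However, there is a genuine gap in the step you black-box as ``the general theorem'', and it concerns exactly the place where the $\sqrt{n\log n}$ threshold earns its keep. Your description --- produce a container family where each container is either small or within symmetric difference $\eps n$ of some $B \in \SF_0(G)$, then discard the $m$-sets living only in small containers --- only yields that almost every sum-free $m$-set $I$ satisfies $|I \setminus B| \le \eps n$ for \emph{some} $B$. This is strictly weaker than the claimed conclusion that $I \subseteq B$: a set could still use anywhere from $1$ to $\eps n$ elements outside $B$. To close this gap the paper runs a second, independent argument (Proposition~\ref{prop:Janson}): for $1 \le |S| = k \le \delta m$ outside $B$, look at the Cayley graph $\G_S$ restricted to $B$, use the hypothesis $\delta(\HH_n,\B_n) = \Omega(n)$ to see that $\G_S[B]$ has $\Omega(kn)$ edges and maximum degree $O(k)$, and apply the hypergeometric Janson inequality to bound the number of independent $(m-k)$-sets in $\G_S[B]$. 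Summing over $S$ and $B$ shows that the contribution from sets with $1 \le |I\setminus B| \le \delta m$ is $o(1)$ times the main term \emph{precisely because} $m \ge C\sqrt{n\log n}$. In fact the algorithmic/container step in the paper (Theorem~\ref{algprop}) already works for $m \ge C\sqrt{n}$; the extra $\sqrt{\log n}$ is needed for this Janson step, and the sharpness remark in the introduction is built exactly around sets using one element outside $B$. Your attribution of the threshold to the container count, and your tightness observation via random $m$-sets for $m = o(n^{1/3})$, both reflect this missing piece --- they show failure at a much smaller scale than $\sqrt{n\log n}$ and do not explain why the paper's threshold is the right one.

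Two smaller points: your container count $\exp\big(O(\sqrt{n\log n})\big)$ understates the actual $\binom{n}{O(\sqrt{n\log n})} = \exp\big(O(\sqrt{n}\log^{3/2} n)\big)$; the paper sidesteps this by pairing $\binom{n}{t}$ with $\binom{(\alpha-\beta)n}{m-t}$ rather than $\binom{(\alpha-\beta)n}{m}$, which you should do too. And the co-degree of a pair $\{x,y\}$ in the Schur hypergraph is at most $3$ (the third vertex is one of $x+y$, $x-y$, $y-x$), not $2$; you list all three but state the wrong bound.
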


Although the factor $\lambda_q \cdot \# \{\text{elements of } G \text{of order } q \}$ above may appear mysterious, it is a natural consequence of the characterization of maximum-size sum-free sets in groups of
Type I, see Theorem~\ref{thm:SFG-structure}.  We remark that the lower
bound $m \ge C(q)\sqrt{n \log n}$ is sharp up to a constant factor,
since there are at least $(n/2) \big( \mu(G) n - 3m\big)^{m-1} / (m-1)!$
sum-free subsets of $G$ which contain exactly one element outside a given
maximum-size sum-free subset of $G$, and this is larger than $m^{1/5}
{\mu(G) n \choose m}$ if $m \le \frac{1}{5} \sqrt{n \log n}$. (Here,
and throughout, $\log$ denotes the natural logarithm.) Hence, assuming
$m^{1/5}$ is much larger than the number of elements of order $q$ in $G$, almost no sum-free subset of $G$ of this size is contained in a maximum-size sum-free subset of $G$.

We shall prove Theorem~\ref{thm:groups} using a new theorem (see Section~\ref{GenThmSec}) which describes the structure of a typical independent set in a 3-uniform hypergraph $\HH$ that satisfies a certain natural `stability' property, see Definition~\ref{def:aB}. The key ingredient in the proof of this theorem is a new method of enumerating independent sets in 3-uniform hypergraphs. We shall also use a simplified version of this method to prove a new bound on the number of independent sets in a certain class of expander graphs known as $(n,d,\lambda)$-graphs.

First, let us recall the definition of $(n,d,\lambda)$-graphs, which are
an important class of expanders; for a detailed introduction to expander
graphs, we refer the reader to~\cite{AS} or~\cite{HLW}. Given a graph
$\G$, let $\lambda_1 \ge \ldots \ge \lambda_n$ denote the eigenvalues of
the adjacency matrix of $\G$. We call $\max\{ |\lambda_2|,|\lambda_n|\}$
the \emph{second eigenvalue} of $\G$.

\begin{defn}[$(n,d,\lambda)$-graphs]
A graph $\G$ is an $(n,d,\lambda)$-graph if it is $d$-regular, 
has $n$ vertices, and the absolute value 
of each of its nontrivial eigenvalues  is
at most $\lambda$. 
\end{defn}

Alon and R{\"o}dl~\cite{AR} gave an upper bound on the number of independent sets in an $(n,d,\lambda)$-graph, and used their result to give sharp bounds on multicolour Ramsey numbers. When $\lambda = \Omega(d)$ (as $n \to \infty$), Theorem~\ref{thm:graphs} below provides a significantly stronger bound than that of Alon and R{\"o}dl, for a wider range of $m$; it is moreover asymptotically sharp. In fact, we will not assume anything about the second eigenvalue of a graph $\G$ as our bound on the number of independent sets of $\G$ will depend only on the \emph{smallest} eigenvalue of $\G$. Given a graph $\G$, let $\lambda(\G)$ be the smallest eigenvalue of the adjacency matrix of $\G$ (denoted by $\lambda_n$ above) and let $I(\G,m)$ be the number of independent sets of size $m$ in $\G$. Observe that $\lambda(\G) < 0$ for every non-empty $\G$ and that, by definition, every $(n,d,\lambda)$-graph satisfies $\lambda(\G) \ge -\lambda$.

\begin{thm}\label{thm:graphs}
  For every $\eps > 0$, there exists a constant $C = C(\eps)$ such that the following holds. If $\G$ is an $n$-vertex $d$-regular graph with $\lambda(\G) \ge -\lambda$, then
  \[
  I(\G,m) \le {\left( \frac{\lambda}{d+\lambda} + \eps \right) n \choose m}
  \]
  for every $m \ge Cn/d$.
\end{thm}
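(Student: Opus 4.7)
My plan is to run a Kleitman--Winston-style container algorithm driven by a single Hoffman-type inequality on the large induced subgraphs of $\G$. Fix $\eps>0$ and set $N := (\lambda/(d+\lambda) + \eps)n$. The spectral input is that $A(\G) + \lambda\,\mathrm{Id} \succeq 0$; decomposing the indicator vector $\mathbf{1}_A$ of any $A\subseteq V(\G)$ along the Perron direction $\mathbf{1}/\sqrt{n}$ yields
\[
2e(\G[A]) + \lambda|A| \;=\; \mathbf{1}_A^{T}\bigl(A(\G)+\lambda\,\mathrm{Id}\bigr)\mathbf{1}_A \;\ge\; \frac{(d+\lambda)|A|^{2}}{n}.
\]
A short computation shows that whenever $|A| > N$, the average degree of $\G[A]$ exceeds $\eps(d+\lambda) \ge \eps d$, so in particular $\G[A]$ contains a vertex of degree at least $\eps d$.

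To each independent set $I$ with $|I| = m$ I attach a fingerprint $T \subseteq I$ as follows. Initialise $A \leftarrow V(\G)$, $T \leftarrow \emptyset$; while $|A| > N$, let $v$ be the canonically first vertex of maximum degree in $\G[A]$. If $v \in I$, put $v$ into $T$ and delete $\{v\}\cup N(v)$ from $A$; otherwise just delete $v$ from $A$. Independence of $I$ guarantees that an ``$\in$''-step never removes a vertex of $I \setminus T$, so at termination $I \setminus T \subseteq A$. Moreover the trajectory, and hence the terminal set $A=A(T)$, is a function of $T$ alone, since a decoder knowing $T$ can replay the algorithm by selecting the same canonical $v$ at each step and branching on the checkable condition $v \in T$. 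The Hoffman bound above forces every ``$\in$''-step to remove at least $1+\eps d$ vertices of $A$, so summing deletions forces $|T| \le n/(\eps d)$.

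Every independent set of size $m$ is thus encoded by a pair $(T, I \setminus T)$ with $|T| \le n/(\eps d)$ and $I\setminus T \subseteq A(T)$, $|A(T)| \le N$, so
\[
I(\G,m) \;\le\; \sum_{t=0}^{\lfloor n/(\eps d)\rfloor}\binom{n}{t}\binom{N}{m-t}.
\]
I would close with the Vandermonde inequality $\binom{N+\eps n}{m}\ge\binom{\eps n}{t}\binom{N}{m-t}$ together with the crude estimate $\binom{n}{t}/\binom{\eps n}{t}\le(2/\eps)^{t}$ (valid for $t \le \eps n/2$), reducing the right-hand side to $(2/\eps)^{O(n/(\eps d))}\binom{N+\eps n}{m}$. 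Using the inequality $\binom{N+2\eps n}{m}/\binom{N+\eps n}{m} \ge \exp(\Omega(\eps m))$, this overshoot is absorbed precisely when $\eps m \gg n\log(1/\eps)/(\eps d)$, that is, whenever $m \ge C(\eps)\,n/d$ for $C(\eps) = \Theta\bigl(\log(1/\eps)/\eps^{2}\bigr)$. Rescaling $\eps$ by a constant factor at the end yields the theorem as stated.

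The main obstacle is the tightness of this final bookkeeping: the Hoffman-type first step delivers the sharp average degree $\eps(d+\lambda)$ with no slack to spare, so the entropy of the fingerprint, $\Theta\bigl(n\log(1/\eps)/(\eps d)\bigr)$ bits, is only barely paid for by the slack inside the target binomial coefficient, $\Theta(\eps m)$ bits. This is exactly where the threshold $m \ge C(\eps)\,n/d$ enters; any weaker lower bound on the maximum degree inside $\G[A]$ (for instance, one that only exploits the second eigenvalue rather than the Perron-direction identity above) would force either $C(\eps)$ or the container size to blow up, so the displayed Hoffman inequality really is the right lemma to feed the algorithm.
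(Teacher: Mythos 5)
Your proof is correct and follows essentially the same route as the paper: the Alon--Chung/Hoffman-type quadratic form bound (the paper cites it as Lemma~\ref{lemma:AC}) to force large induced average degree on sets of size exceeding $\big(\lambda/(d+\lambda)+\eps\big)n$, a Kleitman--Winston fingerprint algorithm, and a binomial-coefficient estimate that absorbs the fingerprint overhead into a slightly enlarged container. The paper phrases the algorithm in bursts (repeatedly skipping non-$I$ vertices until one of $I$ is hit) and closes the counting with ratio estimates rather than Vandermonde plus a $(2/\eps)^t$ overshoot, but these are only bookkeeping differences within the same argument.
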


We remark that the constant $\frac{\lambda}{d+\lambda}$ in Theorem~\ref{thm:graphs} is best possible, since there exist $n$-vertex $d$-regular graphs with $\lambda(\G) \ge -\lambda$ and $\alpha(\G) = \frac{\lambda}{d+\lambda} n$ for many values of $n$, $d$ and $\lambda$ (here $\alpha(\G)$ denotes the independence number of $\G$). For example, consider a blow-up of the complete graph $K_{t+1}$, where each vertex is replaced by a set of size $n/(t+1)$ and each edge is replaced by a random $d/t$-regular bipartite graph with colour classes of size $n/(t+1)$ each. This blown-up graph $\G$ is $d$-regular and, with high probability, it satisfies $\lambda(\G) = -d/t$ and $\alpha(\G) = n/(t+1)$.


In Section~\ref{q=2Sec}, we shall use Theorem~\ref{thm:graphs}, together with some basic facts about characters of finite Abelian groups, to give a completely self-contained proof of Theorem~\ref{thm:groups} in the case $q = 2$. For previous results relating the problem of estimating the number of sum-free subsets of groups, to that of estimating the number of independent sets in regular graphs, see for example~\cite{Noga,LLS,Sap02}. For other results on counting independent sets in graphs and hypergraphs, see Balogh and Samotij~\cite{BSmm,BSst}, Carroll, Galvin and Tetali~\cite{CGT}, Galvin and Kahn~\cite{GK}, Kahn~\cite{Kahn,K02}, Peled and Samotij~\cite{PS}, Sapozhenko~\cite{Sap01} and Zhao~\cite{Zhao}.

The rest of the paper is organised as follows. In Section~\ref{GenThmSec}, we state our structural theorem for 3-uniform hypergraphs, and in Section~\ref{GraphSec} we prove Theorem~\ref{thm:graphs}. In Sections~\ref{AlgSec} and~\ref{JansonSec}, we prove the structural theorem, and in Sections~\ref{GroupSec} we shall apply it to prove Theorem~\ref{thm:groups}. Finally, in Section~\ref{q=2Sec}, we shall prove Theorem~\ref{thm:groups} again in the case $q = 2$.

\section{A structural theorem for 3-uniform hypergraphs}\label{GenThmSec}

In this section we shall introduce our main tool: a theorem which allows us to deduce structural results for sparse sum-free sets from stability results in the dense setting. Since we wish to apply our result to sum-free sets in various Abelian groups, we shall use the language of general $3$-uniform (sequences of) hypergraphs $\HH = (\HH_n)_{n \in \N}$, where $|V(\HH_n)| = n$. Throughout the paper, the reader should think of $\HH$ as encoding the Schur triples (that is, triples $(x,y,z)$ with $x + y = z$) in an additive structure.

We now define the stability property with which we shall be able to work. Let $\alpha \in (0,1)$ and let $\B = (\B_n)_{n \in \N}$, where $\B_n$ is a family of subsets of $V(\HH_n)$. We shall write $|\B_n|$ for the number of sets in $\B_n$, and set $\|\B_n\| = \max\{ |B| : B \in \B_n\}$. 

\begin{defn}\label{def:aB}
A sequence of hypergraphs $\HH = (\HH_n)_{n \in \N}$ is said to be $(\alpha,\B)$-stable if for every $\gamma > 0$ there exists $\beta > 0$ such that the following holds. If $A \subseteq V(\HH_n)$ with $|A| \ge (\alpha - \beta) n$, then either $e(\HH_n[A]) \ge \beta e(\HH_n)$, or $|A \setminus B| \le \gamma n$ for some $B \in \B_n$.
\end{defn}

Roughly speaking, a sequence of hypergraphs $(\HH_n)$ is $(\alpha, \B)$-stable if for every $A \subseteq V(\HH_n)$ such that $|A|$ is almost as large as the extremal number for $\HH_n$ (i.e., the size of the largest independent set), the set $A$ is either very close to an extremal set $B \in \B_n$, or it contains many (i.e., a positive fraction of all) edges of $\HH_n$. Observe that classical stability results, such as that of Erd\H{o}s and Simonovits~\cite{ES1,ES2}, are typically of this form.

We shall need two further technical conditions on $\HH$. Let
$$\Delta_2(\HH_n) \,=\, \max_{T \subseteq V(\HH_n), \, |T| = 2} \big| \big\{ e \in \HH_n \colon T \subseteq e \big\} \big|,$$
and note that if $\HH$ encodes Schur triples then $\Delta_2(\HH_n) \le 3$. Also define 
$$\delta(\HH_n,\B_n) \, := \, \min_{B \in \B_n} \min_{v \in V(\HH_n) \setminus B} \left| \big\{ e \in \HH_n \,:\, |e \cap B| = 2 \text{ and } v \in e \big\} \right|,$$
and, as usual, write $\alpha(\HH_n)$ for the size of the largest independent set in $\HH_n$.

The following theorem is the key step in the proof of Theorem~\ref{thm:groups}.

\begin{thm}\label{genthm}
Let $\HH = (\HH_n)_{n \in \N}$ be a sequence of $3$-uniform hypergraphs which is $(\alpha,\B)$-stable, with $e(\HH_n) = \Theta(n^2)$ and $\Delta_2(\HH_n) = O(1)$. Suppose that $|\B_n| = n^{O(1)}$, that $\alpha(\HH_n) \ge \|\B_n\| \ge \alpha n$, and that $\delta(\HH_n,\B_n) = \Omega(n)$. Then there exists a constant $C = C(\HH,\B) > 0$ such that if
$$m \,\ge\, C\sqrt{n \log n},$$
then almost every independent set in $\HH_n$ of size $m$ is a subset of some $B \in \B_n$.
\end{thm}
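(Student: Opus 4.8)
The plan is to enumerate independent sets of size $m$ in $\HH_n$ by first producing a small "certificate" (a bounded-size partial set together with a bit of extra information) from which the full independent set can be reconstructed up to few choices, and then to show that those certificates that do *not* lead into some $B \in \B_n$ are so rare that the corresponding independent sets form a $o(1)$ fraction of all of $\SF(\HH_n, m)$. The engine behind this is a container-type / algorithmic step: given an independent set $I$, run a greedy procedure that, using the hypothesis $\Delta_2(\HH_n) = O(1)$ and $e(\HH_n) = \Theta(n^2)$, selects a subset $S \subseteq I$ with $|S| = O(\sqrt{n\log n})$ such that the set $X = X(S)$ of vertices $v$ for which $v$ together with two elements of $S$ spans an edge is "large" — large enough that $V(\HH_n) \setminus (S \cup X)$, which necessarily contains $I$, is not much bigger than $\alpha n$. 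Concretely one keeps adding to $S$ vertices of $I$ that kill a substantial number of new candidates; since each pair in $S$ lies in $\le \Delta_2$ edges and there are $\Theta(n^2)$ edges overall, after $O(\sqrt{n\log n})$ steps the "fingerprint" $X(S)$ has covered all but $(\alpha - \beta)n$ vertices, or else the remaining available set $A = V(\HH_n)\setminus(S\cup X)$ still spans $\ge \beta e(\HH_n)$ edges and we can continue. This is the standard hypergraph-container mechanism specialized to $3$-uniform hypergraphs, and it is where the $\sqrt{n\log n}$ threshold enters.

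Next I would invoke $(\alpha, \B)$-stability. At the end of the greedy process the available set $A \supseteq I \setminus S$ satisfies $|A| \ge |I| - |S| \ge m - O(\sqrt{n\log n})$, which (since $m \ge C\sqrt{n\log n}$ with $C$ large, and $\|\B_n\| \ge \alpha n$) is $\ge (\alpha - \beta)n$ once we also know — and this needs a short separate argument — that $m$ itself is at least roughly $\alpha n$; but in fact we do *not* know $m \ge \alpha n$, so the stability hypothesis must instead be applied to the container $A$ itself rather than to $I$. The point is: if $e(\HH_n[A]) \ge \beta e(\HH_n)$ the greedy step would not have terminated, so at termination $e(\HH_n[A]) < \beta e(\HH_n)$, and stability forces $|A \setminus B| \le \gamma n$ for some $B \in \B_n$. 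Thus every independent set $I$ is captured by a pair $(S, B)$ with $|S| = O(\sqrt{n\log n})$, $B \in \B_n$, and $I \subseteq S \cup (A \cap B) \cup (A \setminus B) \subseteq S \cup B \cup R$ where $R = A \setminus B$ has $|R| \le \gamma n$. The number of such pairs $(S,B)$ is at most $n^{O(\sqrt{n\log n})} \cdot n^{O(1)} = n^{O(\sqrt{n\log n})}$.

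Finally comes the counting dichotomy. Fix a captured independent set $I$ with certificate $(S,B)$ and let $j = |I \setminus (S \cup B)| = |I \cap R|$; if $j = 0$ then $I \subseteq S \cup B$, and since $\|\B_n\| \ge \alpha n$ and $|S|$ is small one checks $I$ differs from a subset of a single $B' \in \B_n$ — essentially the conclusion we want, modulo absorbing $S$. The main work is to show that the independent sets with $j \ge 1$ are negligible. Here I would use the hypothesis $\delta(\HH_n, \B_n) = \Omega(n)$: every vertex $v \notin B$ lies in $\Omega(n)$ edges having exactly two vertices in $B$, so if $I$ contains such a $v$ together with $m - j - |S|$ vertices inside $B$, then $v$ forbids $\Omega(n)$ *pairs* from $B$ from both being in $I$; counting independent sets that use $v$ and are otherwise mostly inside $B$ costs a factor of roughly $\binom{\alpha n}{m - j}/\binom{\alpha n}{m} \cdot (\text{loss from forbidden pairs})$, and the loss beats the gain $n^{O(\sqrt{n\log n})}$ from the number of certificates precisely when $m \ge C\sqrt{n\log n}$. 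Iterating over $j = 1, \dots, |S| + \gamma n$ and summing a geometric-type series, the total number of independent sets of size $m$ not contained in any single $B \in \B_n$ is $o\!\left(\binom{\mu n}{m}\right) = o(|\SF(\HH_n,m)|)$, using $\alpha(\HH_n) \ge \alpha n$ for the lower bound on the denominator. I expect this last step — making the trade-off between the $n^{O(\sqrt{n\log n})}$ certificate count and the entropy loss from a single "external" vertex quantitatively tight, uniformly in $j$ — to be the main obstacle; the stability and greedy-selection steps are comparatively routine given $\Delta_2(\HH_n) = O(1)$ and $e(\HH_n) = \Theta(n^2)$.
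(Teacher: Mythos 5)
Your high-level plan — encode $I$ by a small certificate $S$ via a container/fingerprint algorithm, use $(\alpha,\B)$-stability to conclude that the final ``available'' set $A \supseteq I\setminus S$ is close to some $B\in\B_n$, and then count according to $j = |I\cap(A\setminus B)|$ — correctly identifies the two ingredients (a Kleitman--Winston-type algorithm and a Janson-type estimate exploiting $\delta(\HH_n,\B_n)=\Omega(n)$). But the way you chain them together has a genuine gap for small $j$. After the algorithm terminates you are paying a factor of roughly $n^{|S|}\cdot|\B_n|$ for the certificate, where $|S|$ is at least $\Theta(\sqrt n)$ and in general $\Theta(m)$. This cost is affordable only if you recoup an \emph{exponential-in-$m$} gain from the structure of $A$: either $|A|\le(\alpha-\beta)n$, so $\binom{|A|}{m}\le\big(\tfrac{\alpha-\beta}{\alpha}\big)^m\binom{\alpha n}{m}$, or $I$ is forced to place $\Omega(m)$ of its elements inside the tiny set $A\setminus B$, giving $\binom{\gamma n}{\Omega(m)}$. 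For the independent sets with $1\le j\le\delta m$ neither gain is available: $A$ may still have $\approx\alpha n$ vertices, and $I$ only places $j$ elements into $A\setminus B$. The Janson loss from a single external vertex is only $n^{-\Omega(1)}$ when $m\approx\sqrt{n\log n}$ (the relevant exponent $m^2/n$ is just $\Theta(\log n)$), and more generally $n^{-\Omega(j)}$; this is nowhere near $n^{-\Theta(|S|)}$, so your sum diverges already at $j=1$.

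The paper avoids this by splitting $\SF(\HH_n,m)$ \emph{before} choosing an encoding, rather than running a single certificate scheme. The ``far'' sets $\SF^{(\delta)}_\ge$ (those with $|I\setminus B|\ge\delta m$ for every $B$) are handled by the container algorithm of Section~\ref{AlgSec}, where the two termination conditions always yield one of the exponential gains above; stability supplies the dichotomy. The ``close'' sets $\SF^{(\delta)}_\le$ (those with $1\le|I\setminus B|\le\delta m$ for some $B$) are counted \emph{without} the container algorithm: they are encoded directly by the pair $(B,\,I\setminus B)$, so the certificate $I\setminus B$ has size $k$ and costs only $\binom{n}{k}\le n^k$. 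This is exactly what Lemmas~\ref{lem:Zhao} and~\ref{lem:SF1} are calibrated to beat — the Cayley graph $\G_S[B]$ with $S=I\setminus B$ has degree $\Theta(k)$, so Janson delivers $n^{-\Omega(k)}$, and $n^k\cdot n^{-\Omega(k)}$ is small for every $k\ge 1$. Your proposal becomes correct once you apply the direct Janson argument (with certificate $I\setminus B$, not the container certificate) to $\SF^{(\delta)}_\le$ first, and only then use the container argument on $\SF^{(\delta)}_\ge$.
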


We shall prove Theorem~\ref{genthm} in Sections~\ref{AlgSec} and~\ref{JansonSec}. In Section~\ref{GroupSec}, we shall use it to prove Theorem~\ref{thm:groups}.

\section{Independent sets in regular graphs with no small eigenvalues}\label{GraphSec}

As a warm-up for the proof of Theorem~\ref{genthm}, we shall prove a bound on the number of independent sets in regular graphs with no small eigenvalues, Theorem~\ref{thm:graphs}, which improves a theorem of Alon and R\"odl~\cite{AR}. This result will be a key tool in our self-contained proof of Theorem~\ref{thm:groups} in the case $q = 2$, see Section~\ref{q=2Sec}. Moreover, many of the ideas from the proof of Theorem~\ref{thm:graphs} will be used again in the proof of Theorem~\ref{genthm}. We remark that the technique of enumerating independent sets in graphs used in this section was pioneered by Kleitman and Winston~\cite{KW} and our proof of Theorem~\ref{thm:graphs}, below, requires little more than their original method.

Given a graph $\G$ on $n$ vertices, and an integer $m \in [n]$, let $I(\G)$ denote the number of independent sets in $\G$, and recall that $I(\G,m)$ denotes the number of independent sets of size $m$ in $\G$. Alon~\cite{Noga} proved that if $\G$ is a $d$-regular graph on $n$ vertices, then $I(\G) \le 2^{n/2 + o(n)}$ (as $d \to \infty$), resolving a conjecture of Granville (see~\cite{Noga}), and suggested that the unique $\G$ that maximizes $I(\G)$ among all such (i.e., $n$-vertex $d$-regular) graphs might be a disjoint union of copies of $K_{d,d}$. This conjecture was proven (using the entropy method) by Kahn~\cite{Kahn} for bipartite graphs, and recently in full generality by Zhao~\cite{Zhao}.

\begin{thm}[Kahn~\cite{Kahn}, Zhao~\cite{Zhao}] 
Let $\G$ be a $d$-regular graph on $n$ vertices. Then
$$I(\G) \, \le \, \big( 2^{d+1} - 1)^{n/2d},$$
where equality holds if and only if $\G$ is a disjoint union of copies of $K_{d,d}$.
\end{thm}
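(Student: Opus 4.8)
The plan is to establish the bound by reducing the general case to the bipartite case via Zhao's bipartite-double-cover trick, and to handle the bipartite case by Kahn's entropy argument; the equality characterization then follows by tracking equality through both steps.

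\emph{The bipartite case.} First I would prove that if $\G$ is $d$-regular and bipartite, with parts $A$ and $B$ (necessarily of equal size $|V(\G)|/2$), then $I(\G) \le (2^{d+1}-1)^{|V(\G)|/2d}$. Let $\mathcal{I}$ be a uniformly random independent set of $\G$, viewed as a random subset of $V(\G)$, so that $\log_2 I(\G) = H(\mathcal{I})$ for the base-two entropy $H$; write $\mathcal{I}_S = \mathcal{I} \cap S$. By the chain rule, $H(\mathcal{I}) = H(\mathcal{I}_B) + H(\mathcal{I}_A \mid \mathcal{I}_B)$; exposing the vertices of $A$ one by one and dropping conditioning gives $H(\mathcal{I}_A \mid \mathcal{I}_B) \le \sum_{v \in A} H(\mathcal{I}_v \mid \mathcal{I}_{N(v)})$, while Shearer's lemma applied to the covering of $B$ by the neighbourhoods $\{N(v)\}_{v \in A}$ (each vertex of $B$ lying in exactly $d$ of them) gives $H(\mathcal{I}_B) \le \frac1d \sum_{v \in A} H(\mathcal{I}_{N(v)})$. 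Hence
\[
H(\mathcal{I}) \,\le\, \sum_{v\in A}\Big(\tfrac1d H(\mathcal{I}_{N(v)}) + H(\mathcal{I}_v\mid\mathcal{I}_{N(v)})\Big).
\]

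\emph{The local optimization.} The technical heart is to show that each summand above is at most $\frac1d\log_2(2^{d+1}-1)$. Writing $q = \Pr[\mathcal{I}_{N(v)} = \emptyset]$ and using that $\mathcal{I}_{N(v)} \ne \emptyset$ forces $v \notin \mathcal{I}$, one bounds $H(\mathcal{I}_v \mid \mathcal{I}_{N(v)}) \le q$ and $H(\mathcal{I}_{N(v)}) \le h(q) + (1-q)\log_2(2^d-1)$, where $h$ is the binary entropy function, and then maximizes $\tfrac1d\big(h(q) + (1-q)\log_2(2^d-1)\big) + q$ over $q \in [0,1]$ by a short calculus argument; the optimum equals $\tfrac1d\log_2(2^{d+1}-1)$ and is attained when the local law is exactly that induced at a single vertex of $K_{d,d}$ by its uniformly random independent set (recall $I(K_{d,d}) = 2^{d+1}-1$). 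Summing over the $|V(\G)|/2$ vertices of $A$ and exponentiating yields the bipartite bound.

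\emph{The reduction.} For a general $d$-regular $\G$ on $n$ vertices I would pass to the bipartite double cover $\G \times K_2$, which is $d$-regular and bipartite on $2n$ vertices, and prove $I(\G)^2 \le I(\G \times K_2)$. An independent set of $\G \times K_2$ is precisely an ordered pair $(S,T)$ of subsets of $V(\G)$ with no $\G$-edge between $S$ and $T$. To get the inequality I would inject ordered pairs of independent sets of $\G$ into such pairs: given independent $S,T$, the graph $\G[S \triangle T]$ is bipartite with parts $S\setminus T$ and $T\setminus S$, so — after fixing a linear order on $V(\G)$ — one places each connected component of $\G[S\triangle T]$ entirely into the first or the second coordinate according to which part contains the component's least vertex, while leaving $S\cap T$ in both coordinates; one checks the resulting pair is cross-independent and that $(S,T)$ is recoverable using uniqueness of the bipartition of a connected bipartite graph. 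This is Zhao's bipartite swapping trick. Combining it with the bipartite bound applied to $\G\times K_2$,
\[
I(\G)^2 \,\le\, I(\G\times K_2) \,\le\, (2^{d+1}-1)^{2n/2d} \,=\, (2^{d+1}-1)^{n/d},
\]
which is the claimed bound after taking square roots.

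\emph{Equality, and the main obstacle.} For the characterization, equality forces $I(\G\times K_2) = (2^{d+1}-1)^{2n/2d}$, and the equality analysis of the entropy argument (all local summands tight, all applications of Shearer's lemma and of the chain rule tight) should force $\G\times K_2$ to be a disjoint union of copies of $K_{d,d}$. One then deduces the same for $\G$: if $\G$ is bipartite then $\G\times K_2 \cong \G \sqcup \G$, so every component of $\G$ is complete bipartite, hence by $d$-regularity a copy of $K_{d,d}$; and $\G$ cannot be non-bipartite, since a non-bipartite connected component $\G'$ would make the corresponding component of $\G\times K_2$ a connected complete bipartite graph $K_{|V(\G')|,|V(\G')|}$, impossible for a $d$-regular $\G'$ (which has more than $d$ vertices). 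I expect the main obstacle to be exactly this rigidity analysis in the bipartite case — extracting from tightness of the entropy bound that $\mathcal{I}$ behaves precisely as for a union of $K_{d,d}$'s — rather than the inequalities themselves; Zhao's swapping trick, once found, is comparatively routine, and the calculus optimization is elementary.
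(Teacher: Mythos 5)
The paper does not prove this theorem; it is stated as a known result and attributed directly to Kahn~\cite{Kahn} (bipartite case, entropy method) and Zhao~\cite{Zhao} (reduction of the general case to the bipartite one), so there is no in-paper proof to compare against. Your outline correctly reconstructs those two published arguments — Kahn's chain-rule/Shearer decomposition with the local optimization landing on $\tfrac1d\log_2(2^{d+1}-1)$, and Zhao's bipartite-double-cover injection $I(\G)^2 \le I(\G\times K_2)$ together with the rigidity analysis — and the equality discussion (ruling out a non-bipartite component via $\G'\times K_2\cong K_{d,d}$ forcing $|V(\G')|=d<d+1$) is sound.
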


Since $-d$ is an eigenvalue of $K_{d,d}$, any $d$-regular graph $\G$ containing a copy of $K_{d,d}$ satisfies $\lambda(\G) = -d$. One might hope that a stronger bound on $I(\G)$ holds for $d$-regular graphs $\G$ with $\lambda(\G) > -d$. Alon and R\"odl~\cite{AR} proved such a bound on $I(\G,m)$ for the slightly narrower class of $(n,d,\lambda)$-graphs and used their result to give sharp bounds on Ramsey numbers.

\begin{thm}[Alon and R\"odl~\cite{AR}] \label{AlonRodl}
Let $\G$ be an $(n,d,\lambda)$-graph. Then
$$I(\G,m) \, \le \, \left( \frac{emd^2}{4\lambda n \log n} \right)^{2(n/d)\log n} {2\lambda n / d \choose m}$$
for every $m \ge 2 (n/d) \log n$. 
\end{thm}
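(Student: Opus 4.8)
I would reconstruct the argument of Alon and R\"odl as follows. The plan is to prove this ``container-type'' estimate by combining the standard spectral (expander mixing) bound on the edge-density of induced subgraphs with the independent-set enumeration technique of Kleitman and Winston~\cite{KW}.

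First I would record the spectral input. If $M$ is the adjacency matrix of $\G$ and $A \subseteq V(\G)$ has $|A| = a$, then writing $\one_A = (a/n)\one + y$ with $y \perp \one$ and using that every nontrivial eigenvalue of $M$ has absolute value at most $\lambda$ gives
\[
2\,e(\G[A]) \, = \, \one_A^{\mathrm{T}} M\one_A \, = \, \frac{a^2 d}{n} + y^{\mathrm{T}} M y \, \ge \, \frac{a^2 d}{n} - \lambda\|y\|^2 \, \ge \, \frac{a^2 d}{n} - \lambda a .
\]
Hence the average degree of $\G[A]$ is at least $da/n - \lambda$, and so whenever $a > 2\lambda n/d$ the induced subgraph $\G[A]$ contains a vertex whose degree inside $A$ exceeds $da/(2n)$.

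Next I would run the Kleitman--Winston algorithm. Fix an independent set $I$ with $|I| = m$, maintain a set $A$ of ``live'' vertices initialised to $V(\G)$, and repeat: let $v$ be the vertex of $A$ of largest degree in $\G[A]$ (ties broken by a fixed ordering of $V(\G)$); if $v \in I$, append $v$ to the fingerprint and delete $v$ together with its $\G[A]$-neighbours from $A$, and otherwise delete only $v$; stop as soon as $|A| \le 2\lambda n/d$. Since $I$ is independent, a vertex of $I$ leaves $A$ only by being appended, so no vertex of $I$ outside the fingerprint $F$ is ever touched before the algorithm halts; hence $F \subseteq I$ and $I = F \sqcup (I \cap A_{\mathrm{end}})$, where $A_{\mathrm{end}}$ is the live set on termination. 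Replaying the algorithm and appending a vertex exactly when it lies in $F$ recovers $A_{\mathrm{end}}$ from $F$ alone, so the pair $(F, I \cap A_{\mathrm{end}})$ determines $I$. By the spectral input, each append performed while $|A| > 2\lambda n/d$ multiplies $|A|$ by a factor at most $1 - d/(2n)$, and since $n(1-d/(2n))^{t} \le 1$ for $t := 2(n/d)\log n$, at most $t$ vertices are ever appended (otherwise $|A|$ would already have dropped below $2\lambda n/d$). Therefore
\[
I(\G,m) \, \le \, \sum_{s=0}^{t} \big( \text{number of valid fingerprints of size } s \big)\binom{2\lambda n/d}{\,m-s\,} .
\]

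It remains to enumerate the fingerprints and assemble the bound, and this is where the real work lies. At the $i$-th append, the appended vertex belongs to the live set present just after the $(i-1)$-th append, which has size at most $n(1 - d/(2n))^{i-1} \le n e^{-(i-1)d/(2n)}$; multiplying these bounds yields a bound of order $n^{s} e^{-\binom{s}{2}d/(2n)}$ on the number of fingerprints of size $s$. Converting this into the stated estimate requires (i) balancing this count against the binomial factors $\binom{2\lambda n/d}{m-s}$ so that the sum over $s$ is controlled by a single term, and (ii) using the sharper shrink rate $1 - d/n$ valid while $|A|$ is a constant proportion of $n$, together with an optimised choice of the cutoff, in order to extract precisely the exponent $2(n/d)\log n$ and the constants $\tfrac{emd^2}{4\lambda n\log n}$ and $\tfrac{2\lambda n}{d}$. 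This book-keeping, rather than any conceptual difficulty, is the main obstacle; one may also assume throughout that $m \le \lambda n/(d+\lambda)$, since otherwise $I(\G,m)=0$ by the Hoffman ratio bound and the inequality is trivial.
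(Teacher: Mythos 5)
The paper does not actually prove Theorem~\ref{AlonRodl}: it is quoted from Alon and R\"odl~\cite{AR}, and the closest argument in the paper is the proof of Theorem~\ref{thm:graphs-loc-dense} via the Basic Algorithm, which is precisely the Kleitman--Winston scheme~\cite{KW} you describe (spectral lower bound on induced edge-density, greedy max-degree fingerprint, container of size $2\lambda n/d$). So your strategy is the right one and coincides with the method used throughout Section~\ref{GraphSec}; the spectral input, the replay argument showing the fingerprint determines the final live set, the bound $t=2(n/d)\log n$ on the number of appended vertices, and the resulting decomposition of $I(\G,m)$ as a sum over fingerprint sizes $s\le t$ of (number of fingerprints of size $s$) times $\binom{2\lambda n/d}{m-s}$ are all correct as written.

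The gap is that the proposal stops exactly where the content of the statement lies: the theorem is a precise quantitative bound, and the summation is never carried out. Following your outline with the crude fingerprint count $\binom{n}{s}\le(en/s)^s$ and the ratio bound $\binom{N}{m-s}\le\big(\tfrac{m}{N-m}\big)^s\binom{N}{m}$, where $N=2\lambda n/d$ and $N-m\ge N/2$ by your Hoffman-bound reduction, each summand is at most $\big(\tfrac{emd}{\lambda s}\big)^s\binom{N}{m}$; this is maximized at $s=t$ and yields $(t+1)\big(\tfrac{emd^2}{2\lambda n\log n}\big)^{t}\binom{2\lambda n/d}{m}$, which misses the stated bound by a factor of $2^{t}(t+1)$. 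The devices you name for closing this gap --- the refined count $n^s e^{-\binom{s}{2}d/(2n)}$, a ``sharper shrink rate,'' an ``optimised cutoff'' --- are not executed, and the last of them would alter both the exponent $2(n/d)\log n$ and the container size $2\lambda n/d$, so it is not evident that the constants land where the theorem says. Since Theorem~\ref{AlonRodl} is quoted in the paper only for comparison with Theorem~\ref{thm:graphs} and is never applied, a weaker constant would be harmless in context; but as a proof of the statement as written, the quantitative assembly must actually be completed.
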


If $\lambda = \Omega(d)$ as $n \to \infty$, then Theorem~\ref{thm:graphs} improves the above result in three ways: it provides a stronger bound for a wider range of values of $m$ in a wider class of graphs. Theorem~\ref{thm:graphs} is an immediate consequence of the following theorem, combined with the Alon-Chung lemma (Lemma~\ref{lemma:AC}, below).

\begin{thm}\label{thm:graphs-loc-dense}
For every $\eps,\delta > 0$, there exists a constant $C = C(\eps,\delta)$ such that the following holds. Let $\G$ be a $d$-regular graph on $n$ vertices, and suppose that $2e(A) \ge \eps |A|d$ for every $A \subseteq V(\G)$ with $|A| \ge \big( \alpha + \delta \big) n$. Then 
$$I(\G,m) \le {(\alpha + 2\delta)n \choose m}$$
for every $m \ge Cn/d$.
\end{thm}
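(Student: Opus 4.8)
The plan is to count independent sets of size $m$ via the Kleitman--Winston container method. First I would fix an arbitrary linear order on the vertices of $\G$ and run the following deterministic algorithm on a given independent set $I$ of size $m$: maintain an "available" set $A$, initialised to $V(\G)$, and at each step let $v$ be the available vertex of maximum degree into $A$; if $v \in I$, record $v$ in a "fingerprint" set $S$, delete $v$ and all its $A$-neighbours from $A$; if $v \notin I$, just delete $v$ from $A$. We run this for a fixed number of steps (to be chosen) or until $|A|$ drops to $(\alpha+2\delta)n$. The key point is the standard one: because we always pick the maximum-degree available vertex, as long as $|A| \ge (\alpha+\delta)n$ the local density hypothesis $2e(A) \ge \eps|A|d$ forces the maximum degree in $\G[A]$ to be at least $\eps d$, so each time we put a vertex into the fingerprint we delete at least $\eps d$ vertices from $A$. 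Hence after at most $O(n/(\eps d))$ fingerprint-steps the available set has shrunk below $(\alpha+\delta)n$; in fact, continuing a little further, after at most $t := C'n/d$ fingerprint steps (for a suitable $C' = C'(\eps,\delta)$) we can guarantee $|A| \le (\alpha+2\delta)n$.

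The point of the algorithm is that $I$ is determined by the pair $(S, I \cap A)$ where $A$ is the final available set and $S \subseteq I$ is the recorded fingerprint with $|S| \le t$: indeed $S$ together with $\G$ determines the entire run of the algorithm (the vertex chosen at each step depends only on $\G$, the order, and which of the previously-examined vertices lay in $I$, and all of those are recovered from $S$), hence determines $A$, and then $I = S \cup (I\cap A)$ with $I \cap A \subseteq A$, $|A| \le (\alpha+2\delta)n$. Therefore
\[
I(\G,m) \,\le\, \sum_{s \le t} \binom{n}{s} \binom{(\alpha+2\delta)n}{m-s} \,\le\, (t+1)\binom{n}{t}\binom{(\alpha+2\delta)n}{m}.
\]
It remains to absorb the factor $(t+1)\binom{n}{t}$ into the binomial coefficient. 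Since $t = C'n/d$ and $m \ge Cn/d$ with $C$ large, we have $t \le (C'/C)m$, so $\binom{n}{t} \le (en/t)^t$ while $\binom{(\alpha+2\delta)n}{m}/\binom{(\alpha+\delta)n}{m} \ge \big(\tfrac{\alpha+2\delta}{\alpha+\delta}\big)^{m}(1+o(1))$ grows geometrically in $m$; a direct comparison, using that $t/m$ can be made as small as we like by taking $C$ large, shows $(t+1)\binom{n}{t} \le \binom{(\alpha+2\delta)n}{m}/\binom{(\alpha+\delta)n}{m}$, which gives the claimed bound $I(\G,m) \le \binom{(\alpha+2\delta)n}{m}$. (One should be slightly careful to also handle the regime where the algorithm terminates early because $|A|$ has already dropped below $(\alpha+2\delta)n$ before $t$ steps — this only helps, as the fingerprint is then even shorter.)

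The main obstacle, and the place requiring the most care, is the bookkeeping in the last paragraph: choosing $t$ and then verifying that the combinatorial factor $(t+1)\binom{n}{t}$ is genuinely dominated by the gap between $\binom{(\alpha+2\delta)n}{m}$ and the "true" container bound $\binom{(\alpha+\delta)n}{m}$, uniformly over all $m \ge Cn/d$ and all $n$. This is where the hypothesis $m \ge Cn/d$ (rather than merely $m \to \infty$) and the strict inequality $2\delta > \delta$ are used: the first ensures $t \ll m$ so the entropy of the fingerprint is a small fraction of $m\log(1/(\alpha+2\delta))$, and the second provides the multiplicative slack. Everything else — the degree lower bound from local density, the reconstruction of $I$ from its fingerprint — is routine once the algorithm is set up correctly.
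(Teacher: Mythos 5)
Your overall strategy is the same as the paper's: the Kleitman--Winston fingerprint algorithm with greedy maximum-degree selection, and the observation that the local-density hypothesis forces each fingerprinted vertex to delete at least $\eps d$ available vertices. However, the final bookkeeping --- which you yourself flag as the delicate part --- has a genuine gap, not merely a detail to be checked.

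There are two problems in the chain
\[
I(\G,m) \,\le\, \sum_{s \le t} \binom{n}{s}\binom{(\alpha+2\delta)n}{m-s} \,\le\, (t+1)\binom{n}{t}\binom{(\alpha+2\delta)n}{m}.
\]
First, a sign slip: to have any slack you must halt the algorithm at $|A| \le (\alpha+\delta)n$ (not $(\alpha+2\delta)n$), so that the extra $\delta$ can absorb the prefactor; with the threshold as you wrote it, the right-hand side is $\ge \binom{(\alpha+2\delta)n}{m}$ before you even start, and the inequality $\binom{(\alpha+2\delta)n}{m-s} \le \binom{(\alpha+2\delta)n}{m}$ is itself false once $m$ exceeds $(\alpha+2\delta)n/2$, which is not excluded. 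Second, and more importantly, even after correcting the threshold, the proposed absorption $(t+1)\binom{n}{t} \le \binom{(\alpha+2\delta)n}{m}/\binom{(\alpha+\delta)n}{m}$ cannot hold in the stated generality. Take $d = \sqrt{n}$, so $m \asymp \sqrt{n}$ and $t \asymp \sqrt{n}$; then $\log\binom{n}{t} \ge t\log(n/t) = \Theta\big((n/d)\log d\big) = \Theta(\sqrt{n}\log n)$, whereas $\log\big(\binom{(\alpha+2\delta)n}{m}/\binom{(\alpha+\delta)n}{m}\big) = m\log\tfrac{\alpha+2\delta}{\alpha+\delta} + O(1) = \Theta(\sqrt{n})$. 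No choice of the constant $C$ (which controls only $t/m$, not $\log d$) repairs this. The saving must come from the product $\binom{n}{t}\binom{(\alpha+\delta)n}{m-t}$ taken \emph{together}: when $\binom{n}{t}$ is enormous, the companion factor $\binom{(\alpha+\delta)n}{m-t}$ is correspondingly tiny compared to $\binom{(\alpha+\delta)n}{m}$, and it is the ratio of the joint summand under $t \mapsto t+1$ that decays. The paper's proof tracks exactly this ratio (splitting on whether $m \le (\alpha+\delta/2)n$ or not), and you need to do the same; factoring the sum as $(t+1)\binom{n}{t}\cdot\binom{(\alpha+\delta)n}{m}$ discards the cancellation that makes the theorem true.
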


The assumption in Theorem~\ref{thm:graphs-loc-dense} that $2e(A) \ge \eps |A| d$ might seem somewhat strong; however, it follows from the Expander Mixing Lemma that it is satisfied by every $(n,d,\lambda)$-graph with $\lambda/(d+\lambda) \le \alpha$. For two sets $S, T \subseteq V(\G)$, let $e(S,T)$ denote the number of pairs $(x,y) \in S \times T$ such that $\{x,y\} \in E(\G)$. In particular, we have $e(S,S) = 2e(S)$ for every $S \subseteq V(\G)$. The following result is proved\footnote{Although the result is stated in~\cite{AC} in a slightly different form, its proof there in fact implies Lemma~\ref{lemma:AC}.} in~\cite{AC}.

\begin{lemma}[Alon-Chung~\cite{AC}]\label{lemma:AC}
Let $\G$ be an $n$-vertex $d$-regular graph. Then for all $A \subseteq V(G)$,
\[
2e(A) \ge \frac{d}{n}|A|^2 + \frac{\lambda(\G)}{n} |A|\big(n - |A|\big).
\]
\end{lemma}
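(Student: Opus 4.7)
The plan is to prove the lemma by expressing $2e(A)$ as a quadratic form in the adjacency matrix $M$ of $\G$ applied to the indicator vector of $A$, and then using the eigendecomposition of $M$.

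Specifically, let $\mathbf{v} = \one_A \in \RR^n$ denote the indicator vector of $A$, so that $\mathbf{v}^T M \mathbf{v} = 2 e(A)$ (each edge inside $A$ contributing $1$ from each of its two ordered orientations). Since $\G$ is $d$-regular, the all-ones vector $\one$ is an eigenvector of $M$ with eigenvalue $d$, and the remaining eigenvalues $\lambda_2 \ge \ldots \ge \lambda_n = \lambda(\G)$ correspond to eigenvectors orthogonal to $\one$. I would decompose
\[
\mathbf{v} = \frac{|A|}{n}\one + \mathbf{w},
\]
where $\mathbf{w} \perp \one$; computing $\|\mathbf{v}\|^2 = |A|$ and $\|\tfrac{|A|}{n}\one\|^2 = |A|^2/n$ by the Pythagorean theorem gives $\|\mathbf{w}\|^2 = |A|(n - |A|)/n$.

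Next, I would expand $\mathbf{v}^T M \mathbf{v}$ using the decomposition. The cross term $\tfrac{|A|}{n}\one^T M \mathbf{w}$ vanishes because $M \one = d \one$ and $\one^T \mathbf{w} = 0$. The diagonal term $\tfrac{|A|^2}{n^2} \one^T M \one$ equals $\tfrac{|A|^2}{n^2} \cdot d n = d|A|^2/n$. For the remaining term $\mathbf{w}^T M \mathbf{w}$, I would expand $\mathbf{w}$ in an orthonormal basis of eigenvectors $u_2, \ldots, u_n$ of $M$ (all orthogonal to $\one$), writing $\mathbf{w} = \sum_{i \ge 2} c_i u_i$. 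Then $\mathbf{w}^T M \mathbf{w} = \sum_{i \ge 2} \lambda_i c_i^2 \ge \lambda(\G) \sum_{i \ge 2} c_i^2 = \lambda(\G) \|\mathbf{w}\|^2$, where the inequality uses that $\lambda_i \ge \lambda(\G)$ for all $i$ together with $c_i^2 \ge 0$ (and the direction of the inequality is preserved even though $\lambda(\G) < 0$, since we are lower-bounding each $\lambda_i$ by $\lambda(\G)$).

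Combining these three pieces gives
\[
2e(A) = \mathbf{v}^T M \mathbf{v} \ge \frac{d}{n}|A|^2 + \lambda(\G) \cdot \frac{|A|(n - |A|)}{n},
\]
which is the desired inequality. There is no real obstacle here; the only subtlety worth flagging is the sign of $\lambda(\G)$, which is negative, so the last term weakens the bound (as expected) when $|A|$ is not close to $0$ or $n$, but the algebraic manipulation is the same either way.
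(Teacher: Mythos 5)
Your proof is correct and complete. Note, however, that the paper does not actually supply a proof of this lemma: it simply cites Alon--Chung~\cite{AC}, with a footnote remarking that the result is stated there in a slightly different form but that the argument in~\cite{AC} implies the lemma as stated. Your spectral argument---writing $\one_A = \tfrac{|A|}{n}\one + \mathbf{w}$ with $\mathbf{w}\perp\one$, using $M\one = d\one$ to kill the cross term and evaluate the $\one$-component, and bounding $\mathbf{w}^T M\mathbf{w}\ge \lambda(\G)\|\mathbf{w}\|^2$ via the eigendecomposition in the orthogonal complement of $\one$---is exactly the standard Alon--Chung/expander-mixing calculation, so it matches what the citation points to. All the steps check out: $\mathbf{v}^T M\mathbf{v} = 2e(A)$, $\|\mathbf{w}\|^2 = |A|(n-|A|)/n$, and the lower bound on the quadratic form on $\one^\perp$ by the smallest eigenvalue is valid regardless of the sign of $\lambda(\G)$.
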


We first deduce Theorem~\ref{thm:graphs} from Theorem~\ref{thm:graphs-loc-dense} and Lemma~\ref{lemma:AC}.

\begin{proof}[Proof of Theorem~\ref{thm:graphs}]
We claim that if $\G$ is an $n$-vertex $d$-regular graph with $\lambda(\G) \ge -\lambda$ and $\alpha = \lambda/(d+\lambda)$, then $2e(A) \ge \eps|A|d$ for every $A \subseteq V(\G)$ with $|A| \ge (\alpha + \eps)n$. This implies that $\G$ satisfies the assumption of Theorem~\ref{thm:graphs-loc-dense} (with $\delta = \eps$), and so the theorem follows.

To prove the claim, suppose that $A \subseteq V(\G)$ satisfies $|A| \ge (\alpha + \eps)n$. By Lemma~\ref{lemma:AC},
$$2e(A) \; \ge \;  \frac{d}{n} \cdot |A|^2 \,-\, \frac{\lambda}{n} \cdot |A|\big(n - |A| \big) \; = \; \frac{|A|}{n} \Big[ \big( d + \lambda \big) |A| - \lambda n \Big].$$
Our lower bound on $|A|$ and the choice of $\alpha = \lambda/(d+\lambda)$ now give
$$2e(A) \; \ge \;  |A| \Big[ \big( d + \lambda \big) \big( \alpha + \eps \big) - \lambda \Big] \; = \; \eps  |A| \big( d + \lambda \big) \; \ge \; \eps |A| d,$$
as required.
\end{proof}

In the proof of Theorem~\ref{thm:graphs-loc-dense}, we shall use an algorithm which uniquely encodes every independent set $I$ of size $m$ in~$\G$ as a pair $(S,I \setminus S)$, where $|S| \le 2n / \eps d \le 2m / C\eps$, and $I \setminus S$ is contained in some set $A$, with $|A| \le (\alpha + \delta)n$, which depends only on $S$. At all times, it maintains a partition of $V(\G)$ into sets $S$, $X$, and $A$ (short for Selected, eXcluded, and Available), such that $S \subseteq I \subseteq A \cup S$. 

At each stage of the algorithm, we will need to order the vertices of $A$ with respect to their degrees. For the sake of brevity and clarity of the presentation, let us make the following definition.

\begin{defn}[Max-degree order]
Given a graph $\G$ and a set $A \subseteq V(\G)$, the \emph{max-degree order} on $A$ is the following linear order $(v_1,\ldots,v_{|A|})$ on the elements of $A$: For every $i \in \{1, \ldots, |A|\}$, $v_i$ is the maximum-degree vertex in the graph $\G[A \setminus \{v_1, \ldots, v_{i-1}\}]$; we break ties by giving preference to vertices that come earlier in some predefined ordering of $V(\G)$.
\end{defn}

We are now ready to describe the Basic Algorithm.

\begin{AlgBasic}
Set $A = V(\G)$ and $S = X = \emptyset$. Now, while $|A| > (\alpha + \delta)n$, we repeat the following:
\begin{enumerate}
\item[$(a)$] Let $i$ be the minimal index (in the max-degree order on $A$) such that $v_i \in I$. 
\item[$(b)$] Move $v_i$ from $A$ to $S$.
\item[$(c)$] Move $v_1, \ldots, v_{i-1}$ from $A$ to $X$ (since they are not in $I$ by the choice of $i$).
\item[$(d)$] Move $N(v_i)$ from $A$ to $X$ (since $I$ is independent and $v_i \in I$). 
\end{enumerate}
Finally, when $|A| \le (\alpha + \delta)n$, we output $S$ (which is a subset of $V(\G)$) and $I \setminus S$ (which is a subset of $A$). 
\end{AlgBasic}

We remark that, as well as in~\cite{KW}, algorithms similar to the one above have been considered before to bound the number of independent sets in graphs~\cite{AR,Sap01} and hypergraphs~\cite{BSmm,BSst}. 

\begin{proof}[Proof of Theorem~\ref{thm:graphs-loc-dense}]
The theorem is an easy consequence of the following two statements:
\begin{equation}\label{eq:t}
I(\G,m) \,\le\, \ds\sum_{t = 1}^{t_0} {n \choose t}{(\alpha + \delta)n \choose m-t},
\end{equation}
where $t_0 = \ds\frac{n}{\eps d} + 1$, and
\begin{equation}\label{eq:bin}
{n \choose t}{(\alpha + \delta)n \choose m-t} \; \le \; \frac{1}{m} {(\alpha + 2\delta)n \choose m}
\end{equation}
if $t \le 2n / \eps d$ and $m \ge Cn/d$. We shall prove~\eqref{eq:t} using the Basic Algorithm;~\eqref{eq:bin} follows from a straightforward calculation.

To prove~\eqref{eq:t}, let $t \in \N$ be the number of elements of $S$ at the end of the algorithm, and note that we have at most ${n \choose t}$ choices for $S$. Now, crucially, we observe that $A$ is uniquely determined by $S$ (given the original ordering of $V(\G)$); indeed, step $(a)$ of the Basic Algorithm requires no knowledge of $I$, only of $S$ and $\G$. Since $|A| \le (\alpha + \delta)n$, it follows that we have at most ${(\alpha + \delta)n \choose m-t}$ choices for $I \setminus S$.

It will thus suffice to show that, given our assumptions on $\G$, the algorithm terminates in at most $t_0 = \frac{n}{\eps d} + 1 \le \frac{2n}{\eps d}$ steps. We shall show that $A$ loses at least $\eps d$ elements at each step of the algorithm (except perhaps the last), from which this bound follows immediately. Indeed, consider a step of the algorithm (not the last), in which a vertex $v_i$ is moved to $S$, and set $A' = A \setminus \{v_1, \ldots, v_{i-1}\}$. Since this is not the last step, we have $|A'| \ge (\alpha + \delta)n$, and so $2e(A') \ge \eps|A'|d$, by our assumption on $\G$. Thus $|N(v_i) \cap A'| \ge \eps d$, since $v_i$ is the vertex of maximum degree in $\G[A']$, and hence $A$ loses at least $\eps d$ elements in this step, as claimed.  

To prove~\eqref{eq:bin}, we use the fact that, if $t$ is replaced by $t+1$, then the left-hand side is multiplied by
\begin{equation}
  \label{eq:ratio}
 \frac{n-t}{t+1} \cdot \frac{m-t}{(\alpha + \delta)n - m + t + 1}.
 \end{equation}
We claim that~\eqref{eq:ratio} is at most $\frac{2}{\delta} \cdot \left(\frac{m}{t}\right)^2$. To prove this, we consider two cases: if $m \le (\alpha + \delta/2)n$, then~\eqref{eq:ratio} is at most 
$$\frac{n}{t} \cdot \frac{2m}{\delta n} \,= \,\frac{2m}{\delta t}$$
while if $m > (\alpha + \delta/2)n$ then it is at most
$$\frac{n}{t} \cdot \frac{m}{t}  \, \le \, \frac{2}{\delta} \cdot \left(\frac{m}{t}\right)^2,$$
since $n \le 2m/ \delta$, and our assumptions imply that $\alpha(\G) < (\alpha + \delta)n$, so we may assume that $m < (\alpha + \delta)n$. Thus, for each $t$ with $t \le t_0 \le \frac{2n}{\eps d}$,
$$\binom{n}{t} \binom{(\alpha + \delta)n}{m-t} \, \le \, \binom{(\alpha+\delta)n}{m} \prod_{r=1}^t \, \frac{2}{\delta} \cdot \left(\frac{m}{r}\right)^2 \, \le \, \left( \frac{1}{t_0!} \right)^2 \left( \frac{2m}{\delta} \right)^{2t_0} \binom{(\alpha + \delta)n}{m}.$$
Since ${b \choose c} \le \big( \frac{b}{a} \big)^c {a \choose c}$ for every $a > b > c \ge 0$, $k! > (k/e)^k$ for every $k \in \N$, the function $t_0 \mapsto \big(\frac{em}{\delta t_0}\big)^{t_0}$ is increasing on the interval $(0,m)$,  and $t_0 \le \frac{2n}{\eps d} \le \frac{2m}{C\eps}$, this is at most
$$\left( \frac{2em}{\delta t_0} \right)^{2t_0} \binom{(\alpha + \delta)n}{m}  \, \le \, \left( \frac{C\eps e}{\delta} \right)^{\frac{4}{C\eps}m} \left(\frac{\alpha + \delta}{\alpha + 2\delta}\right)^m \binom{(\alpha + 2\delta)n}{m}  \, \le \, \frac{1}{m} \cdot \binom{(\alpha + 2\delta)n}{m}$$
as required. In the final inequality we used the fact that $C$ is sufficiently large as a function of $\delta$ and $\eps$, and that $m$ is sufficiently large (as a function of $\delta$), since $m \ge Cn/d$.
\end{proof}

\section{Algorithm argument}\label{AlgSec}

In this section, we shall introduce a more powerful algorithm than that used in Section~\ref{GraphSec}. We shall use this algorithm in the proof of Theorem~\ref{genthm} to bound the number of independent sets which contain at least $\delta m$ elements of $V(\HH_n) \setminus B$ for every $B \in \B_n$. We shall show that when $m \gg \sqrt{n}$, then the number of such independent sets is exponentially small. The model example that the reader should keep in mind when reading this section is when $\HH_n$ is the hypergraph of Schur triples in an $n$-element Abelian group of Type~I($q$), where $q$ is some prime satisfying $q \equiv 2 \pmod 3$.

Given a hypergraph $\HH_n$, a family of sets $\B_n$, and $\delta > 0$, we define
$$\SF^{(\delta)}_{\ge}(\HH_n,\B_n,m) \, := \, \Big\{ I \in \SF(\HH_n,m) \,\colon\, |I \setminus B| \ge \delta m \textup{ for every } B \in \B_n \Big\},$$
where $\SF(\HH_n,m)$ denotes the collection of independent sets in $\HH_n$ of size $m$. The following theorem shows that there are few independent sets in $\HH_n$ (i.e., sum-free sets) of size $m$ which are far from every set $B \in \B_n$.

\begin{thm}\label{algprop}
Let $\alpha > 0$ and let $\HH = (\HH_n)_{n \in \N}$ be a sequence of $3$-uniform hypergraphs which is $(\alpha,\B)$-stable, has $e(\HH_n) = \Theta(n^2)$ and $\Delta_2(\HH_n) = O(1)$. If $\|\B_n\| \ge \alpha n$, then for every $\delta > 0$, there exists a $C > 0$ such that the following holds. If $m \ge C\sqrt{n}$ and $n$ is sufficiently large, then 
$$\big| \SF^{(\delta)}_{\ge}(\HH_n,\B_n,m) \big| \,\le \, \Big( 2^{-\eps m} + \delta^m |\B_n| \Big) {\|\B_n\| \choose m}$$
for some $\eps = \eps(\HH, \delta) > 0$.
\end{thm}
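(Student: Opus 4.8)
The plan is to adapt the encoding argument from the proof of Theorem~\ref{thm:graphs-loc-dense} to the 3-uniform setting, but now run a more sophisticated version of the Basic Algorithm that exploits the $(\alpha,\B)$-stability of $\HH$. The key idea is that an independent set $I \in \SF^{(\delta)}_{\ge}(\HH_n,\B_n,m)$ must be ``locally dense'' in a suitable sense: because $|I \setminus B| \ge \delta m$ for every $B \in \B_n$, the stability property forces any set $A$ that closely envelopes $I$ and has size close to $\alpha n$ to span a positive fraction $\beta e(\HH_n) = \Omega(\beta n^2)$ of the edges of $\HH_n$. Since $\Delta_2(\HH_n) = O(1)$, such an $A$ must contain a vertex whose link (the graph of pairs forming an edge with it) has $\Omega(\beta n^2 / n) = \Omega(\beta n)$ edges inside $A$; by moving to a maximum-link-degree vertex and deleting its link-neighborhood we remove $\Omega(\sqrt{\beta n})$ vertices per step, so we can shrink $A$ quickly down to size roughly $\|\B_n\|$.

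\medskip

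\noindent\textbf{Step 1: The algorithm.} I would define an algorithm that, given $I$, maintains a partition $V(\HH_n) = S \cup X \cup A$ with $S \subseteq I \subseteq A \cup S$, as before. While $A$ is still ``far'' from every $B \in \B_n$ and $|A| \ge (\alpha - \beta) n$ say, we compute the link graph $L_A(v) = \{\{x,y\} \subseteq A : \{x,y,v\} \in \HH_n\}$ for each $v \in A$, order $A$ by decreasing $|L_A(v)|$ (the ``max-link-degree order''), find the first vertex $v_i$ of this order that lies in $I$, move $v_i$ to $S$, move $v_1, \dots, v_{i-1}$ to $X$, and move the link-neighborhood $V(L_A(v_i)) = \{x \in A : \{x, w, v_i\} \in \HH_n$ for some $w \in A\}$ to $X$. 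The point is that $I$ being independent means $I$ contains no edge of $\HH_n$, so $I$ can meet $L_A(v_i)$'s vertex set in at most... actually we need $I \cap A$ to avoid completing an edge with $v_i$, but two vertices $x,y$ with $\{x,y,v_i\} \in \HH_n$ cannot both be in $I$. So it is cleaner to instead delete, at each step, one endpoint of each edge of a large matching in $L_A(v_i)$, or simply to observe that since $I$ is independent and $v_i \in I$, the set $I \cap A$ is an independent set in the graph $L_A(v_i)$, hence misses at least half the vertices spanned by a maximum matching of $L_A(v_i)$ — this is the standard trick for getting a Kleitman--Winston-type bound from a graph with many edges. I would set the algorithm to remove a fixed fraction of $V(L_A(v_i))$ (those not in $I$, determined greedily) at each step.

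\medskip

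\noindent\textbf{Step 2: Counting.} After running the algorithm, $I$ is encoded by the pair $(S, I \setminus S)$, where $|S| = t$ for some $t$, and — crucially — $A$ at termination is determined by $S$ alone (the max-link-degree order and the deletion rule use only $\HH_n$ and $S$, not $I$). Two cases at termination: either $|A| \le \|\B_n\|$-ish and $A$ is close to some $B \in \B_n$, so $I \setminus S$ lies in a set of size about $\|\B_n\|$ plus a $\gamma n$-error that I control via the parameter $\gamma$; or the algorithm ran for $\Omega(n/\sqrt{\beta n}) = \Omega(\sqrt{n/\beta})$ steps. In the first case we get a contribution bounded by $\sum_t \binom{n}{t} \binom{\|\B_n\| + \gamma n \choose m - t}$, which by a ratio computation exactly like \eqref{eq:ratio}--\eqref{eq:bin} — using $m \ge C\sqrt{n}$ so that $t \le O(\sqrt{n})$ and $(m/t)^{2t}$-type factors are absorbed — is at most $\delta^m |\B_n| \binom{\|\B_n\|}{m}$ after choosing $\gamma$ small (relative to $\delta, \alpha$) and $C$ large. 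In the second case, the number of independent sets is at most $\binom{n}{t}$ with $t = \Omega(\sqrt{n/\beta}) \gg$ well, no: here I must be careful. When the algorithm runs long, $|S| = t$ is large, so $\binom{n}{t}\binom{|A|}{m-t}$ with $|A| \le (\alpha-\beta)n < \|\B_n\|$; I want this to be at most $2^{-\eps m}\binom{\|\B_n\|}{m}$. Since $|A| \le (\alpha - \beta) n \le (1 - \beta/\alpha)\|\B_n\|$, the factor $\binom{|A|}{m}/\binom{\|\B_n\|}{m} \le (1 - \beta/\alpha)^m$ already gives exponential savings in $m$; the extra $\binom{n}{t}$ costs at most $2^{O(t \log n)} = 2^{O(\sqrt{n}\log n)}$, which is $o(2^{\eps' m})$ only if $m \gg \sqrt{n}\log n$ — but the theorem only assumes $m \ge C\sqrt{n}$.

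\medskip

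\noindent\textbf{Step 3: The main obstacle.} This last point is exactly where the real work lies, and I expect it to be the main obstacle: with only $m \ge C\sqrt{n}$ (not $C\sqrt{n\log n}$) we cannot afford to pay $\binom{n}{t} = 2^{\Theta(t\log n)}$ for each choice of $S$. The fix is the standard Kleitman--Winston refinement: rather than choosing $S$ as an arbitrary $t$-subset of $V(\HH_n)$, at each step $v_i$ is the $j$-th vertex in the max-link-degree order for some index $j$, and we record the sequence of these indices; because at each step we delete $\Omega(\sqrt{\beta n})$ vertices, the relevant index $j$ at step $k$ ranges over roughly $|A_k| \le n$ values, but the product of the deletion guarantees forces $\sum$ of the deletions to be $\le n$, so by convexity the number of such index sequences of length $t$ is at most $\binom{n}{t}$ still — no. The correct statement: the number of independent sets producing a given trace length $t$ with the max-link-degree refinement is at most $\prod_{k=1}^{t}(\text{number of choices for } v_i \text{ at step } k)$, and since each step removes $\ge r_k := \eps' \sqrt{\beta |A_k|}$ vertices with $\sum r_k \le n$, one bounds $\prod (\text{choices}) \le \prod |A_k|$ and then uses $|A_k| \le n$ crudely only $t = O(\sqrt{n})$ times, giving $2^{O(\sqrt n \log n)}$ — still too lossy. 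Thus the genuinely careful argument, and the crux of the proof, is to show that the number of independent sets $I$ with a long algorithm-trace is at most $2^{-\eps m}\binom{\|\B_n\|}{m}$ by combining the $(1-\beta/\alpha)^m$ gain from $|A|$ being bounded away from $\|\B_n\|$ with a sharper count: we only need the trace to be of length $t \gtrsim \sqrt{n}$ and we get to choose, for each of the $\le C\sqrt{n}$ values of $t$, at most $\binom{\|\B_n\|}{m-t}2^{O(\sqrt n)}$ completions once we observe that the number of possible sets $A$ at termination is itself at most $2^{O(\sqrt n)}$ (not $2^{\Theta(\sqrt n \log n)}$), because $A$ is the complement of the union of $t = O(\sqrt n)$ link-neighborhoods each of which, once $v_i$ is fixed, is determined — so the count of reachable $A$'s is at most the number of choices of the ordered tuple $(v_1^{\mathrm{sel}}, \dots, v_t^{\mathrm{sel}})$, which is $n^{O(\sqrt n)}$. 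Reconciling $n^{O(\sqrt n)} = 2^{O(\sqrt n \log n)}$ against a $2^{-\eps m} = 2^{-\eps C \sqrt n}$ target is impossible term-by-term; it works only because $(1 - \beta/\alpha)^m$ with $m \ge C\sqrt n$ beats $2^{O(\sqrt n \log n)}$ once $C = C(\beta,\alpha,\dots)$ is taken large enough — but $\log n$ grows, so \emph{this still fails for large $n$}. Hence the algorithm must be designed so that a ``long'' trace has length $t = \Omega(m)$, not merely $t = \Omega(\sqrt n)$: each step should be charged against elements of $I \setminus B$, of which there are $\ge \delta m$, so that running the algorithm to exhaustion of the ``far'' condition takes $\Omega_\delta(m)$ steps, making $\binom{n}{t}$ with $t = \Theta(m)$ affordable against $2^{-\eps m}\binom{\|\B_n\|}{m}$ by a direct entropy/ratio comparison — this reallocation of the step-count budget (from $\sqrt n$ to $\delta m$) is, I expect, the key technical insight of the proof, and it is enabled precisely by the $\delta^m|\B_n|$ term in the statement, which absorbs the case where the trace is short because $A$ collapsed onto a $B \in \B_n$ early.
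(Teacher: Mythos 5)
Your diagnosis of the central obstacle is right: with $m = \Theta(\sqrt n)$, a selected set of size $t = \Theta(\sqrt n)$ cannot be paid for by $\binom{n}{t} = n^{\Theta(\sqrt n)}$, and the exponential gain $(1-\beta/\alpha)^m$ does not absorb it. But both the algorithm you propose and the fix you conclude is needed are off. On the algorithm: if $v\in I$ and $L_A(v)$ is the link graph of $v$ inside $A$, then $v\in I$ and independence of $I$ tell you only that $I\cap A$ is an independent set in $L_A(v)$. Since $\Delta_2(\HH_n)=O(1)$, the graph $L_A(v)$ has bounded maximum degree, so its independent sets can occupy a constant fraction of $V(L_A(v))$; no vertex of $A$ is forced out by this observation alone. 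To delete one endpoint of each matching edge you would have to reveal, edge by edge, which endpoint $I$ misses, costing an encoding bit per deleted vertex --- a hopeless ratio. Your ``$\Omega(\sqrt{\beta n})$ vertices per step'' is unjustified, and without it the running time (hence $t$) is not under control. On the budget: you conclude that a long trace should have $t=\Omega(m)$, ``charged against elements of $I\setminus B$.'' This is the wrong direction. In the paper's Claim~\ref{MA5}, the counting boils down to
$$
\binom{n}{t}\binom{(\alpha-\beta)n}{m-t}\Big/\binom{\alpha n}{m}
\;\le\; \Big(\frac{em}{\beta t}\Big)^t\Big(1-\frac{\beta}{\alpha}\Big)^m
\;\le\; \Big[\big(e/\beta^3\big)^{\beta^2}\,e^{-\beta/\alpha}\Big]^m ,
$$
and the bracket is less than one only because $\beta^2\log(e/\beta^3)=o(\beta)$ as $\beta\to 0$. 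If instead $t=cm$ for a constant $c>0$ independent of $\beta$, the factor $(e/(\beta c))^{cm}$ overwhelms $e^{-\beta m/\alpha}$ and the bound fails. The paper needs $t\le\beta^2 m$, a \emph{small} multiple of $m$ that shrinks as $\beta$ does, not a fixed positive fraction of $m$.

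The idea you are missing is the two-scale structure of the Main Algorithm, which is what produces $t\le\beta^2 m$. Rather than revealing one vertex at a time and reasoning about its link, one reveals a block $T\subseteq I$ of size $d:=Cn/m$ all at once and passes to the link \emph{graph of the whole block}, $\G_T$, in which $I\cap A$ must be independent. When $|A|\ge(\alpha-\beta)n$ and $A$ is $\gamma n$-far from every $B\in\B_n$, $(\alpha,\B)$-stability gives $e(\HH_n[A])\ge\beta e(\HH_n)=\Omega(\beta n^2)$, which (via a pigeonhole over ``useful'' vertices) lets one choose $T$ so that $\G_T[A]$ has average degree $\Omega(\beta^4 d)$. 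Now Kleitman--Winston on $\G_T[A]$ costs one encoded vertex per $\Omega(\beta^4 d)$ deleted vertices, so $A$ shrinks below $(\alpha-\beta)n$ after $O(n/(\beta^4 d))$ selection steps, and the graph $\G_T[A]$ can go sparse (forcing a re-choice of $T$) only $O(1/\beta^5)$ times. The choice $d=Cn/m$ balances the three contributions: $|S|\le d+O(n/(\beta^4 d))+O(d/\beta^5)=O(m/(C\beta^5))\le\beta^2 m$ once $C\ge 3/\beta^7$, using $m\ge C\sqrt n$ to bound $d\le m/C$. That is the mechanism by which the encoded set is both small (a $\beta^2$-fraction of $m$) and sufficient to determine $A$; without it, neither the $2^{-\eps m}$ nor the $\delta^m|\B_n|$ term in the conclusion can be reached with only $m\ge C\sqrt n$.
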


We shall describe an algorithm which encodes every independent set $I$ in $\HH_n$ and produces short output for every $I \in \SF^{(\delta)}_{\ge}(\HH_n, \B_n, m)$. As before, our algorithm will maintain a partition of $V(\HH_n)$ into sets $S$, $X$, and $A$ (short for Selected, eXcluded, and Available), such that $S \subseteq I \subseteq A \cup S$. We shall also maintain a set $T \subseteq S$ (for Temporary), and the corresponding graph $\G_T$, i.e., the graph with vertex set $V(\HH_n)$, and edge set
$$E(\G_T) \,=\, \Big\{ \{u,v\} \subseteq V(\G_T) \,\colon\, \{u,v,w\} \in \HH_n \textup{ for some } w \in T \Big\}.$$ 
We shall frequently consider the max-degree order (defined in Section~\ref{GraphSec}) on the vertices of the graph $\G_T[A]$. 

\subsection{The Algorithm}

The idea of the algorithm is quite simple: we apply the Basic Algorithm of Section~\ref{GraphSec} to the graph $\G_T[A]$ as long as it is reasonably dense. If $\G_T[A]$ becomes too sparse, then there are four possibilities: either we have arrived at a set $A$ which has at most $(\alpha - \beta)n$ elements, or a set $A$ which is almost contained in some $B \in \B_n$; or if not, then we can use the $(\alpha,\B)$-stability of $\HH$ to find either a new set $T$ for which $\G_T$ is dense (see Case~2 below), or a set of linear size that contains very few elements of $I$. Therefore, after moving relatively few vertices of $\HH_n$ to $S$, our choice for $I \setminus S$ is limited to a set $A \subseteq V(\G)$ that is either small or almost contained in some $B \in \B_n$. It follows that if $I$ was far from every $B \in \B_n$, then (in both cases) the number of ways to choose $I \setminus S$ from $A$ is very small.

We begin by choosing some constants. Let $\gamma > 0$ and note that since $\HH$ is $(\alpha,\B)$-stable, there exists $\beta > 0$ so that if $|A| \ge (\alpha - \beta) |V(\HH_n)|$ and $|A \setminus B| > \gamma |V(\HH_n)|$ for every $B \in \B_n$, then $e(\HH_n[A]) \ge \beta e(\HH_n)$. Let us choose $\beta > 0$ sufficiently small so that $e(\HH_n) \ge \beta n^2$ and $\Delta_2(\HH_n) \le 1/\beta$ for all sufficiently large $n$. Let $C = C(\beta) > 0$ be sufficiently large, and set
$$d \,=\, \frac{Cn}{m} \,\le\, \frac{m}{C} \,\le\, \frac{n}{C}.$$
We are ready to describe the Main Algorithm; this is the key step in our proof of Theorem~\ref{genthm}.

\begin{MainAlg}
We initiate the algorithm with $T = S \subseteq I$, a deterministically chosen subset of $I$ of size $d$ (the first $d$ elements of $I$ in our ordering of $V(\HH_n)$, say), and with $A = V(\HH_n) \setminus S$ and $X = \emptyset$. Now, while $|A| > (\alpha - \beta)n$ and $|A \setminus B| > \gamma n$ for every $B \in \B_n$, we repeat the following steps:

\medskip
\noindent \textbf{Case 1:} If the average degree in $\G_T[A]$ is at least $\beta^4 d$, then:
\begin{enumerate}
\item[$(a)$] Let $i$ be the minimal index in the max-degree order on $V(\G_T[A])$ such that $v_i \in I$. 
\item[$(b)$] Move $v_i$ from $A$ to $S$.
\item[$(c)$] Move $v_1, \ldots, v_{i-1}$ from $A$ to $X$ (since they are not in $I$ by the choice of $i$).
\item[$(d)$] Move $N(v_i)$ from $A$ to $X$ (since $I$ is independent and $v_i \in I$). 
\end{enumerate}

\medskip
\noindent \textbf{Case 2:} If the average degree of $\G_T[A]$ is less than $\beta^4 d$, then we find a new set $T$ as follows. Since $\HH$ is $(\alpha,\B)$-stable, $|A| > (\alpha - \beta)n$, and $|A \setminus B| > \gamma n$ for every $B \in \B_n$, then $A$ contains at least $\beta e(\HH_n)$ edges of $\HH_n$. Set
$$Z \,=\, \Big\{ z \in A \,\colon\, e\big( \G_z[A] \big) \ge \beta^2 n \Big\},$$
where $\G_z[A]$ is the graph with vertex set $A$ and edge set $\big\{ \{x,y\} : \{x,y,z\} \in E(\HH_n) \big\}$. We call the elements of $Z$ \emph{useful}. Since $e(\HH_n) \ge \beta n^2$, we have $e\big( \HH_n[A] \big) \ge \beta^2 n^2$, and so
$$\sum_{z \in A} e\big( \G_z[A] \big) = 3e(\HH_n[A]) \ge 3\beta^2 n^2.$$
Moreover, we have $e\big( \G_z[A] \big) \le \Delta(\HH_n) \le \Delta_2(\HH_n) n \le n / \beta$ for every $z \in V(\G)$. Thus, by the pigeonhole principle, it follows that $|Z| \ge 2\beta^3 n$.

Now we have two subcases:
\begin{enumerate}
\item[$(a)$] If $I$ contains fewer than $d$ useful elements, then move these elements from $A$ to $S$ and move the other useful elements from $A$ to $X$.
\item[$(b)$] If $I$ contains more than $d$ useful elements, choose $d$ of them $u_1,\ldots,u_d$ (the first $d$ in our ordering, say) and move them from $A$ to $S$.  Moreover, set $T = \{u_1,\ldots,u_d\}$.
\end{enumerate}

\medskip
\noindent
Finally, when $|A| \le (\alpha - \beta)n$, or $|A \setminus B| \le \gamma n$ for some $B \in \B_n$, then we output $S$ (which is a subset of $V(\G)$) and $I \setminus S$ (which is a subset of $A$) and stop. 
\end{MainAlg}

\smallskip
We shall show that the Main Algorithm encodes at most $\beta^2m$ elements of $I$ in $S$, and that $S$ determines $A$. Theorem~\ref{algprop} then follows from some simple counting.

\subsection{Proof of Theorem~\ref{algprop}}

We begin by proving three straightforward claims about the Main Algorithm; these, together with some simple counting, will be enough to prove the theorem. The following statements all hold under the assumptions of Theorem~\ref{algprop}.

\begin{claim}\label{MA1}
The Main Algorithm passes through Case $1$ at most $2n/(\beta^4 d)$ times, and through Case $2$ at most $1/\beta^5$ times.
\end{claim}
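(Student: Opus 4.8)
The plan is to bound the number of passes through each case separately, using a potential/counting argument in each case. For Case 1, I would argue exactly as in the proof of Theorem~\ref{thm:graphs-loc-dense}: each time the algorithm executes Case 1, it moves the maximum-degree vertex $v_i$ of $\G_T[A']$ (where $A' = A \setminus \{v_1,\dots,v_{i-1}\}$) into $S$ and removes its neighbourhood in $A'$ from $A$. Since Case 1 is only entered when the average degree of $\G_T[A]$ is at least $\beta^4 d$, and removing the low-index non-$I$ vertices $v_1,\dots,v_{i-1}$ only increases the average degree (they are the highest-degree vertices and we are in the max-degree order — here one should be slightly careful, but the key point is that $v_i$ is a max-degree vertex of $\G_T[A']$, hence has degree at least the average degree of $\G_T[A]$, which is at least $\beta^4 d$; alternatively one notes the average degree of $\G_T[A']$ is at least that of $\G_T[A]$). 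Therefore at least $\beta^4 d$ vertices are removed from $A$ at each Case~1 step. Since $|A| \le n$ throughout and $A$ only shrinks, there can be at most $n/(\beta^4 d)$ such steps; the factor $2$ is slack to absorb the final (possibly short) step, giving at most $2n/(\beta^4 d)$ passes through Case~1.

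For Case 2, the bound will come from tracking the set $T$ and the useful elements. Each time Case~2 is reached, the algorithm computes the set $Z$ of useful vertices of $A$, and we have shown $|Z| \ge 2\beta^3 n$. In subcase $(b)$ we reset $T$ to a fresh $d$-subset $\{u_1,\dots,u_d\} \subseteq Z \cap I$, and I would argue this can happen at most $1/\beta^5$ times (roughly). The mechanism: when $T$ is reset to a set of $d$ useful vertices, the graph $\G_T[A]$ has many edges — indeed each $u_j$ contributes $e(\G_{u_j}[A]) \ge \beta^2 n$ edges, so $\G_T[A]$ has average degree of order $\beta^2 n \cdot d / |A| \gtrsim \beta^2 d$, which is $\gg \beta^4 d$ provided $\beta$ is small. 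Hence immediately after a Case~2$(b)$ reset the algorithm returns to Case~1, and it will stay in Case~1 for a while, deleting $\ge \beta^4 d$ vertices of $A$ per step, until $\G_T[A]$ thins out again. The total number of Case~1 steps is at most $2n/(\beta^4 d)$ from the first paragraph, but this alone does not directly bound Case~2 resets. Instead I would use a cleaner accounting: each Case~2$(b)$ reset moves $d$ new vertices into $S$, and we will separately show (this is the content of the next claims / the counting that proves Theorem~\ref{algprop}) that $|S| \le \beta^2 m$ at termination. Since each Case~2$(b)$ reset contributes exactly $d = Cn/m$ vertices to $S$, the number of such resets is at most $|S|/d \le \beta^2 m / d = \beta^2 m^2 / (Cn)$. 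Hmm — this is $\beta^2 m^2/(Cn)$, which is bounded by a constant only when $m = O(\sqrt n)$, and more precisely is at most $\beta^2/(C')$-ish under the hypothesis $m \ge C\sqrt n$... so this needs care; the honest route is the reverse: bound the number of Case~2 passes first, by a self-contained argument, and then deduce the bound on $|S|$.

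So the self-contained argument for Case~2 is the real crux, and this is the step I expect to be the main obstacle. The idea: consider the quantity $e(\HH_n[A])$, which starts at $\le e(\HH_n) = O(n^2)$ and is nonincreasing (vertices only leave $A$). Whenever subcase $(a)$ of Case~2 occurs (fewer than $d$ useful elements in $I$), \emph{all} useful vertices — at least $2\beta^3 n$ of them — are removed from $A$; each useful $z$ had $e(\G_z[A]) \ge \beta^2 n$, and $\sum_z e(\G_z[A]) = 3 e(\HH_n[A])$, so removing all of $Z$ from $A$ decreases $3e(\HH_n[A])$ by at least $\sum_{z \in Z} e(\G_z[A]) \ge 2\beta^3 n \cdot \beta^2 n = 2\beta^5 n^2$ — wait, one must be careful that removing $z$ from $A$ removes at least $e(\G_z[A])$ triples from $\HH_n[A]$, which is true since those triples all contain $z$. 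Hence each Case~2$(a)$ event drops $e(\HH_n[A])$ by $\ge \frac{2}{3}\beta^5 n^2$, and since $e(\HH_n[A]) \le O(n^2) \le \frac1\beta n^2$, there are at most $O(1/\beta^6)$ such events. For subcase $(b)$: after a reset, $\G_T[A]$ is dense, and a Case~2 is only re-entered after $\G_T[A]$ has become sparse again, which (by the Case~1 analysis) required many Case~1 deletions in between; so the number of $(b)$-resets is at most (total Case~1 steps) which is $2n/(\beta^4 d)$ — but $d = Cn/m \ge C\sqrt n$, so $2n/(\beta^4 d) = 2m/(\beta^4 C)$, still not a constant. The fix is to observe that after a $(b)$-reset $\G_T[A]$ has average degree $\gtrsim \beta^2 d$ and one needs to delete a constant \emph{fraction} of $A$'s edge-count, or equivalently $\Omega(\beta^2 |A|)$ vertices worth of degree, before returning — combined with the fact that the stopping condition $|A| > (\alpha-\beta)n$ means $|A|$ cannot drop below $(\alpha-\beta)n$, one gets that between consecutive resets $e(\G_T[A])$ drops from $\Omega(\beta^2 n d)$ to $O(\beta^4 n d)$, which can happen but then the \emph{next} reset needs fresh useful vertices; iterating, the total is controlled by $e(\HH_n[A])$ again. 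Pinning down the exact constant $1/\beta^5$ is a bookkeeping matter; the essential inputs are $(i)$ $|Z| \ge 2\beta^3 n$, $(ii)$ $e(\G_z[A]) \ge \beta^2 n$ for useful $z$ and $\le n/\beta$ for all $z$, $(iii)$ $e(\HH_n[A])$ is monotone and $O(n^2)$, and $(iv)$ a freshly reset $\G_T[A]$ is dense enough to force a return to Case~1. I would write the Case~2 bound via the monotone potential $e(\HH_n[A])$ as above, taking $\beta$ small enough that all the absolute constants from $e(\HH_n) = \Theta(n^2)$ and $\Delta_2 = O(1)$ are absorbed, which is exactly why the constants $\beta^4, \beta^5$ appear with those particular exponents.
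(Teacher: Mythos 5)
Your Case~1 argument has a real gap. You claim (in two equivalent formulations) that $v_i$, being a max-degree vertex of $\G_T[A']$ where $A' = A \setminus \{v_1,\dots,v_{i-1}\}$, must have degree at least the average degree of $\G_T[A]$, because ``removing the low-index\ldots vertices only increases the average degree.'' This is false: $v_1,\dots,v_{i-1}$ are the \emph{highest}-degree vertices in the max-degree order, and removing a max-degree vertex always strictly \emph{decreases} the average degree of a non-empty graph (if $\Delta \ge \bar d = 2e/N$, then the new average $2(e-\Delta)/(N-1) < \bar d$). Concretely, if $\G_T[A]$ is a large star plus many isolated vertices, and $I$ avoids the centre but contains a leaf, then $v_i$ is that leaf and has degree $0$ in $\G_T[A']$, so that Case~1 step removes only $O(1)$ vertices. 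The analogy to Theorem~\ref{thm:graphs-loc-dense} breaks precisely here: in that proof the density hypothesis $2e(A) \ge \eps|A|d$ is assumed for \emph{every} large $A$, so it applies to $A'$ directly; in the Main Algorithm, ``average degree $\ge \beta^4 d$'' is only a runtime check on the current set $A$, not a universal hypothesis. The paper's fix is to look one step \emph{ahead}: if the next step is also Case~1, then the average degree of the \emph{new} available set $A''$ is $\ge \beta^4 d$, and since $A'' \subseteq A'$ and $\deg_{\G_T[A']}(v_i) = \Delta(\G_T[A'])$, monotonicity of the max degree gives $\deg_{\G_T[A']}(v_i) \ge \Delta(\G_T[A'']) \ge \beta^4 d$. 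This costs nothing but has to be said: it means only Case~1 steps \emph{followed by} another Case~1 step are guaranteed to remove $\ge \beta^4 d$ vertices, and the remaining Case~1 steps (at most one per Case~2 pass, plus the final step) are covered by the Case~2 bound, giving $n/(\beta^4 d) + 1/\beta^5 \le 2n/(\beta^4 d)$.

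For Case~2 your accounting is also incomplete in a crucial place. Your potential argument for Case~$2(a)$ via $e(\HH_n[A])$ is a valid alternative to the paper's (the paper simply notes each such pass moves $\ge 2\beta^3 n - d$ vertices to $X$, so it is cleaner and gives a better exponent, but yours works). The real problem is Case~$2(b)$: you correctly observe that the number of resets cannot be bounded via $|S| \le \beta^2 m$ (that bound is a consequence of this claim, so the argument would be circular, and you rightly abandon it), but your replacement argument never produces a constant bound because you do not convert ``edges of $\G_T[A]$ removed'' into ``vertices removed from $A$'' with the right exchange rate. The ingredient you are missing is the degree bound $\Delta(\G_T) \le |T|\Delta_2(\HH_n) \le d/\beta$, which holds because for each $x$ and each $t \in T$ there are at most $\Delta_2(\HH_n)$ vertices $y$ with $\{x,y,t\} \in \HH_n$. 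With this in hand the argument closes cleanly: a fresh reset gives $e(\G_T[A]) \ge \beta^2 nd/\Delta_2(\HH_n) - O(d^2) \ge 2\beta^4 nd$, re-entering Case~2 requires $e(\G_T[A]) < \beta^4 d\,|A|/2 \le \beta^4 nd/2$, hence at least $\beta^4 nd$ edges and therefore at least $\beta^4 nd / (d/\beta) = \beta^5 n$ vertices must leave $A$ in between, and since $|A|$ can drop by at most $n$ in total this caps the number of Case~2 passes at $1/\beta^5$. This single unified potential $|A|$ covers both $2(a)$ and $2(b)$ and gives the stated constant; your mixed potentials ($e(\HH_n[A])$ for $(a)$, something unresolved for $(b)$) would be more work even once fixed.
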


\begin{proof}
We prove the second statement first. To do so, simply observe that each time we pass through Case $2(a)$, we move at least $2\beta^3 n - d \ge \beta^3 n$ vertices from $A$ to $X$, and each time we pass through Case $2(b)$, we obtain a graph $\G_T[A]$ with at least $\beta^2 n d / \Delta_2(\HH_n) - O(d^2) \ge 2\beta^4 n d$ edges. In the latter case, we must remove at least $\beta^4 n d$ edges from $\G_T[A]$ before we can return to Case $2$. Since $\Delta_2(\HH_n) \le 1/\beta$, it follows that $\Delta(\G_T) \le |T|/\beta = d/\beta$, since if $xy \in E(\G_T)$ then there exists $z \in T$ such that $\{x,y,z\} \in E(\HH_n)$, and for each pair $\{x,z\}$ there are at most $\Delta_2(\HH_n)$ such $y$. Thus we must remove at least $\beta^5 n$ vertices from $A$ before returning to Case~$2$, and hence the algorithm can pass through Case~$2$ at most $1/\beta^5$ times before the set $A$ shrinks to size $\alpha n$, as claimed.

To prove the first statement, note that each time we pass through Case~1 on two successive steps of the algorithm, we remove at least $\beta^4 d$ vertices of $A$ in the first of these. Indeed, since $\G_T[A]$ (for the second step) has average degree at least $\beta^4 d$, then by the definition of the max-degree order, the vertex we removed in the first step must have had forward degree at least $\beta^4 d$. By the argument above, there are at most $1/\beta^5$ steps at which this fails to hold, and therefore the algorithm passes through Case~$1$ at most
$$\frac{n}{\beta^4 d} \,+\, \frac{1}{\beta^5} \, \le \, \frac{2n}{\beta^4 d}$$
times, as claimed.
\end{proof}

The next claim is a simple consequence of Claim~\ref{MA1} and our choice of $d$.

\begin{claim}\label{MA2}
If $C \ge 3/\beta^7$, then $|S| \le \beta^2 m$ at the end of the Main Algorithm.
\end{claim}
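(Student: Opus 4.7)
The plan is to track the growth of $|S|$ over the course of the Main Algorithm by summing the contributions from initialization, from each pass through Case~1, and from each pass through Case~2, and then to plug in the counts supplied by Claim~\ref{MA1} together with the choice $d = Cn/m$.

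First I would note that $S$ is initialized as a deterministic subset of $I$ of size exactly $d$, so it contributes $d$ to the final count. Each pass through Case~1 moves a single vertex $v_i$ from $A$ to $S$, contributing $1$. Each pass through Case~2 contributes at most $d$ vertices to $S$: in subcase 2(a) the algorithm moves fewer than $d$ useful elements of $I$ into $S$, while in subcase 2(b) it moves exactly the $d$ elements $u_1,\ldots,u_d$. Applying the two bounds of Claim~\ref{MA1}, namely that Case~1 is entered at most $2n/(\beta^4 d)$ times and Case~2 at most $1/\beta^5$ times, gives
$$|S| \;\le\; d \,+\, \frac{2n}{\beta^4 d} \,+\, \frac{d}{\beta^5}.$$

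Next I would substitute $d = Cn/m$ and use the inequality $d \le m/C$ (equivalent to $m \ge C\sqrt{n}$, which is built into the setup) to rewrite this as
$$|S| \;\le\; \frac{m}{C} \,+\, \frac{2m}{\beta^4 C} \,+\, \frac{m}{\beta^5 C} \;\le\; \frac{3m}{\beta^5 C},$$
the last inequality holding provided $\beta$ is small enough that $\beta^5 + 2\beta + 1 \le 3$. Choosing $C \ge 3/\beta^7$ then yields $|S| \le \beta^2 m$, as claimed.

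The main obstacle here is essentially nonexistent: all the substantive structural work (that each pass through Case~2 produces a graph $\G_T[A]$ of reasonably large average degree, and that Case~1 steps repeatedly remove at least $\beta^4 d$ vertices from $A$) is already encapsulated in Claim~\ref{MA1}. What remains is arithmetic bookkeeping. The only mild subtlety worth flagging is that the initial size $d$ itself must not exceed $\beta^2 m$, which is exactly the place where the hypothesis $m \ge C\sqrt{n}$ is used: without it, the middle term $2n/(\beta^4 d)$ and the initial term $d$ cannot both be balanced against $\beta^2 m$ by a single choice of $d$.
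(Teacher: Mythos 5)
Your proof is correct and follows essentially the same bookkeeping as the paper's: you account for the initial $d$ elements of $S$, add $1$ per pass through Case~1 and at most $d$ per pass through Case~2, invoke Claim~\ref{MA1} for the pass counts, and substitute $d = Cn/m \le m/C$ to conclude.
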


\begin{proof}
Each time the Main Algorithm passes through Case~1, $|S|$ increases by one; each time it passes through Case~2, $|S|$ increases by at most $d$. Thus, by Claim~\ref{MA1} and our choice of $d$, 
$$|S| \, \le \, d + \frac{2n}{\beta^4 d} + \frac{d}{\beta^5}  \, \le \, \frac{m}{C} + \frac{2m}{\beta^4 C} + \frac{2m}{C \beta^5} \, \le \, \beta^2 m$$
if $C \ge 3/\beta^7$, as claimed.
\end{proof}

We next make the key observation that the set $S$ contains all the information we need to recover the final set $A$ produced by the algorithm.

\begin{claim}\label{MA3}
The set $A$ is uniquely determined by the set $S$ of selected elements. 
\end{claim}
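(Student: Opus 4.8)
The plan is to show by induction on the number of steps performed that, at the start of each iteration of the main loop, the current partition $(S,X,A)$ of $V(\HH_n)$, together with the current temporary set $T$, is a function of $S$ alone (given the fixed ambient ordering of $V(\HH_n)$ and the fixed hypergraph $\HH_n$). Since the algorithm's termination condition ($|A| \le (\alpha-\beta)n$ or $|A\setminus B|\le\gamma n$ for some $B\in\B_n$) depends only on $A$, once this invariant is established, the final $A$ output by the algorithm is determined by the final $S$, which is exactly the claim. Note $X = V(\HH_n)\setminus(S\cup A)$, so it suffices to track $(S,A,T)$.

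For the base case, the algorithm is initialised with $T=S$ equal to the first $d$ elements of $I$ in the ordering of $V(\HH_n)$, and $A = V(\HH_n)\setminus S$, $X=\emptyset$; given $S$, all of these are determined. For the inductive step, assume $(S,A,T)$ at the start of an iteration is determined by $S$. The algorithm first tests whether the average degree of $\G_T[A]$ is at least $\beta^4 d$; since $\G_T$ is a function of $T$ and $\HH_n$, and $A$ and $T$ are determined by $S$, this test is determined by $S$. In Case~1, the max-degree order on $V(\G_T[A])$ is a deterministic function of $\G_T[A]$ (ties broken by the predefined ordering of $V(\HH_n)$), hence of $S$; the algorithm moves $v_i$ to $S$ and moves $v_1,\dots,v_{i-1}$ and $N_{\G_T[A]}(v_i)$ to $X$. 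The new $S$ is the old $S$ together with $v_i$, and given the new $S$ we recover $i$ as the index of the newly added element in the old max-degree order, hence we recover which vertices $v_1,\dots,v_{i-1}$ and which neighbours left $A$; so the new $(S,A)$, and $T$ (unchanged), are determined by the new $S$. In Case~2, the useful set $Z = \{z\in A : e(\G_z[A])\ge \beta^2 n\}$ is a deterministic function of $A$ and $\HH_n$, hence of $S$. In subcase~(a) all useful elements leave $A$ (some into $S$, the rest into $X$), so the new $A = A\setminus Z$ is determined, the new $S$ is the old $S$ plus $I\cap Z$, and from the new $S$ we recover $A$ (subtract $Z$) and recover which elements of $Z$ went to $S$ versus $X$; $T$ is unchanged. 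In subcase~(b), the $d$ selected useful elements $u_1,\dots,u_d$ are, by construction, the first $d$ elements of $S\cap Z$ in the ambient ordering, so $T=\{u_1,\dots,u_d\}$ is recovered from the new $S$, and $A$ loses exactly these $d$ vertices; again $(S,A,T)$ is determined by the new $S$.

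The only subtlety — and the one point that really needs care — is subcase~2(a): here the useful elements of $I$ are moved to $S$ while the useful elements not in $I$ are moved to $X$, and one must check that when reconstructing from $S$ we do not need to know $I$ to decide this split. But we do not: $Z$ is recomputed from the reconstructed $A$, and $S\cap Z$ is precisely the set of useful elements that were placed into $S$, while $Z\setminus S$ were placed into $X$. Similarly, in every case it is important that step~$(a)$ (choosing $v_i$) and the Case~1/Case~2 branch, and the test in subcase~(a) versus (b), are all evaluated using only quantities computable from $S$ — the only place $I$ enters is in choosing \emph{which} vertices to move \emph{into} $S$, and these are exactly the vertices recorded in the final $S$. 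Thus no reference to $I$ is needed to run the reconstruction forward from $S$, and the claim follows.
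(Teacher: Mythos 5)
Your proposal is correct and follows essentially the same approach as the paper: a step-by-step deterministic reconstruction showing that every piece of information the algorithm needs about $I$ is recoverable from the final set $S$ (including the Case~2(a) versus 2(b) test via $|S\cap Z|<d$ versus $\ge d$, and the set $T$ as the first $d$ elements of $S\cap Z$). The only presentational caveat is that the induction is cleanest when stated as a \emph{forward} simulation driven by the final $S$ — at each step one finds the first vertex of $S\cap A$ in the max-degree order, rather than ``recovering $i$ from the newly added element,'' which as literally phrased presupposes knowledge of the previous $S$ — but your closing paragraph makes clear you have the forward reconstruction in mind, so the argument is sound.
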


\begin{proof}
This follows because all steps of the Main Algorithm are deterministic, and every element of $I$ which we need to observe is placed in $S$. Indeed, in Case~1 we observe only that $v_i \in I$, and that the elements $v_1,\ldots,v_{i-1} \not\in I$. Since $v_i \in S$ and $v_1\ldots,v_{i-1} \not\in S$, this can be deduced from $S$. In Case~$2$, the set $Z$ does not depend on $I$. If at most $d-1$ elements of $Z$ are in $S$, then we are in Case~$2(a)$ and the remaining elements of $Z$ are in $X$; otherwise, we are in Case~$2(b)$ and the first $d$ elements of $S \cap Z$ (in the order on $V(\HH_n)$) form the set $T$. Thus, inductively, we see that at each stage of the algorithm, the set $A$ is determined by the set $S$. 
\end{proof}

After all this preparation, we are ready to prove Theorem~\ref{algprop}.

\begin{proof}[Proof of Theorem~\ref{algprop}]
Let $\HH = (\HH_n)_{n \in \N}$ be an $(\alpha,\B)$-stable sequence of 3-uniform hypergraphs as in the statement of the theorem, let $\delta > 0$ be arbitrary, and choose $\gamma = \gamma(\alpha, \delta) > 0$ to be sufficiently small. Since $\HH$ is $(\alpha,\B)$-stable, there exists $\beta > 0$ such that if $|I| \ge (\alpha - 2\beta) n$, then either $|I \setminus B| < \gamma n$ for some $B \in \B_n$, or $I$ is not an independent set in $\HH_n$. In particular, note that if $\gamma < \delta /(\alpha - 2\beta)$, then either $m \le \left( \alpha - 2\beta \right) n$ or $\SF^{(\delta)}_{\ge}(\HH_n,\B_n,m)$ is empty. We choose such a $\beta$ sufficiently small, set $C = 3/\beta^7$, and choose $\eps = \eps(\beta) > 0$ to be sufficiently small. Finally, let $n$ be sufficiently large. 

Applying the Main Algorithm to each independent set $I \in \SF^{(\delta)}_{\ge}(\HH_n,\B_n,m)$, that is, to every independent set in $\HH_n$ such that $|I| =m$ and $|I \setminus B| \ge \delta m$ for every $B \in \B_n$, we obtain (for each such $I$) a pair $(A,S)$ such that $S \subseteq I \subseteq A \cup S$. There are two cases to deal with, corresponding to the two possibilities that can occur at the end of the Main Algorithm. We first show that if $|A \setminus B| \le \gamma n$ for some $B \in \B_n$ then a much stronger bound holds.

\begin{claim}\label{MA4}
  There are at most $\delta^{m} |\B_n| {\|\B_n\| \choose m}$ sets $I \in \SF^{(\delta)}_{\ge}(\HH_n,\B_n,m)$ such that the Main Algorithm ends because $|A \setminus B| \le \gamma n$ for some $B \in \B_n$.
\end{claim}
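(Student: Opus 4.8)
The plan is to bound the number of independent sets $I \in \SF^{(\delta)}_{\ge}(\HH_n,\B_n,m)$ for which the Main Algorithm halts in the second way, namely because $|A\setminus B|\le \gamma n$ for some $B\in\B_n$. The encoding supplied by the algorithm writes each such $I$ as a pair $(S,I\setminus S)$ with $S\subseteq I\subseteq A\cup S$, and by Claim~\ref{MA3} the final set $A$ is a function of $S$ alone. By Claim~\ref{MA2} we have $|S|\le\beta^2 m$, so there are at most $\sum_{t\le\beta^2 m}\binom{n}{t}$ choices for $S$. For each fixed $S$ (hence fixed $A$ and, via $A$, a fixed witnessing $B=B(S)\in\B_n$), the remaining set $I\setminus S$ has size $m-|S|$ and lies inside $A$. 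The key point is to split $I\setminus S$ according to whether its elements lie in $B$ or not: writing $a=|I\cap(A\setminus B)|$ and $b = |(I\setminus S)\cap B|$, we have $a+b = m-|S|$. Since $A\setminus B$ has at most $\gamma n$ elements, there are at most $\binom{\gamma n}{a}$ choices for the part inside $A\setminus B$, and at most $\binom{|B|}{b}\le\binom{\|\B_n\|}{b}$ choices for the part inside $B$.

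First I would record the crucial lower bound on $a$. Because $I\in\SF^{(\delta)}_{\ge}(\HH_n,\B_n,m)$ we have $|I\setminus B|\ge\delta m$ for \emph{every} $B\in\B_n$, in particular for $B=B(S)$; and $I\setminus B(S)\subseteq S\cup(A\setminus B(S))$, so $a = |I\cap(A\setminus B(S))|\ge \delta m - |S| \ge (\delta-\beta^2)m \ge (\delta/2)m$, using $\beta$ small relative to $\delta$. So the ``atypical'' part of $I$ that sits outside $B$ has linear-in-$m$ size but must be chosen from a set of size only $\gamma n$. Next I would bound the number of ways to choose this atypical part: $\binom{\gamma n}{a}\le\binom{\gamma n}{\delta m/2}\cdot 2^{\gamma n}$ is too crude; instead I would use $\binom{\gamma n}{a}\le (e\gamma n/a)^a$ together with $a\ge\delta m/2$ and $m\ge C\sqrt n$ (so $n\le m^2/C^2$, giving $\gamma n/a\le 2\gamma m/(C^2\delta)$), which is smaller than any prescribed constant $\rho<1$ once $C$ is large. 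Hence $\binom{\gamma n}{a}\le\rho^a\le\rho^{\delta m/2}$.

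Then I would assemble the count. Summing over $S$, over $a$ in the range $[\delta m/2,\,m]$, and bounding the $B$-part by $\binom{\|\B_n\|}{b}\le\binom{\|\B_n\|}{m-t-a}$, one gets a bound of the shape
\[
\sum_{t\le\beta^2 m}\binom{n}{t}\,|\B_n|\sum_{a\ge\delta m/2}\rho^{a}\binom{\|\B_n\|}{m-t-a}.
\]
Using $\binom{\|\B_n\|}{m-t-a}\le \binom{\|\B_n\|}{m}\big/\big(\text{product of } (t+a) \text{ small ratios}\big)$ — more precisely, peeling off $t+a$ factors each at most $m/(\|\B_n\|-m)\le 2m/n$ (since $\|\B_n\|\ge\alpha n$ and $m=o(n)$) — converts $\binom{\|\B_n\|}{m-t-a}$ into at most $(2m/n)^{t+a}\binom{\|\B_n\|}{m}$. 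Then $\binom{n}{t}(2m/n)^t\le (2em/t)^t$ is absorbed by the $\binom{\|\B_n\|}{m}$ and the sum over $t\le\beta^2m$ contributes a factor bounded by, say, $m^{O(\beta^2 m)}=2^{o(m)}$, while the $a$-sum is dominated by its first term times a geometric factor: $\sum_{a\ge\delta m/2}\rho^a(2m/n)^a \le 2\rho^{\delta m/2}(2m/n)^{\delta m/2}\le\rho^{\delta m/4}$ for $C$ large. Since $\rho<1$ was arbitrary and $|\B_n|=n^{O(1)}$ is also $2^{o(m)}$ (as $m\ge C\sqrt n$), choosing $C=C(\delta,\beta,\gamma)$ large enough makes the whole bound at most $\delta^m|\B_n|\binom{\|\B_n\|}{m}$, which is the claimed estimate. (Here it is cleanest to verify directly that $\rho^{\delta m/4}\cdot 2^{o(m)}\le\delta^m$ for $C$ large, since $\rho$ may be taken smaller than $\delta$.)

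The main obstacle is the bookkeeping in the last paragraph: one must simultaneously absorb three separate ``$2^{o(m)}$'' contributions — the $\binom{n}{t}$ sum over $t$, the factor $|\B_n|$, and the overhead from converting binomial coefficients $\binom{\|\B_n\|}{m-t-a}$ into $\binom{\|\B_n\|}{m}$ — into a single exponentially small factor $\rho^{\Theta(m)}$ coming from the constraint $a\ge\delta m/2$ and $\gamma n/a = O(m/C)$. The delicate point is that all of this hinges on $m\ge C\sqrt n$ with $C$ chosen \emph{after} $\beta,\gamma,\delta$, so one should fix the order of quantifiers carefully: given $\delta$, choose $\gamma$ (small enough for $(\alpha,\B)$-stability to apply with the desired $\beta$), then $\beta$, then $\eps$, and finally $C$ large enough that $\gamma n/a\le\gamma m^2/(C^2\cdot\delta m/2) = 2\gamma m/(C^2\delta)$ is as small as needed and the geometric series closes. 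No genuinely hard inequality is involved — only the discipline of tracking which constants depend on which.
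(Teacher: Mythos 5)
Your overall decomposition is exactly the paper's: bound the number of choices for $S$ by $\binom{n}{t}$, use Claim~\ref{MA3} to determine $A$ (and a witnessing $B$) from $S$, split $I\setminus S$ into its ``atypical'' part of size $a\ge\delta m - t$ living in $A\setminus B$ (so at most $\gamma n$ slots) and its ``bulk'' living in $B$, and then estimate the resulting sum. The structure is right, but there is a genuine quantitative error in how you extract the exponential gain.

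The claim that $\binom{\gamma n}{a}\le\rho^a$ for a prescribed $\rho<1$ once $C$ is large does not hold. You derive $\gamma n/a\le 2\gamma m/(C^2\delta)$ from $n\le m^2/C^2$ and $a\ge\delta m/2$, and then assert this is small for $C$ large. But $m$ is unbounded (it can be as large as $\alpha n$), so $2\gamma m/(C^2\delta)$ is not a constant; when $m$ is close to $C\sqrt{n}$ the better bound is $\gamma n/a\le 2\gamma n/(\delta m)\approx 2\gamma\sqrt{n}/(C\delta)$, which \emph{grows} with $n$. In short, $\binom{\gamma n}{a}$ alone cannot be made exponentially small in $a$ by any choice of $C$: indeed for $a\ll \gamma n$ this binomial coefficient is superexponentially large in $a$. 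Consequently the later step ``$\rho^{\delta m/4}\cdot 2^{o(m)}\le\delta^m$ since $\rho$ may be taken smaller than $\delta$'' has no foundation.

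The quantity that \emph{is} exponentially small is the product of $\binom{\gamma n}{a}$ with the compensating factor $(2m/(\alpha n))^a$ you correctly obtain from converting $\binom{\|\B_n\|}{m-t-a}$ into $\binom{\|\B_n\|}{m}$. Multiplying,
\[
\binom{\gamma n}{a}\left(\frac{2m}{\alpha n}\right)^a \;\le\; \left(\frac{e\gamma n}{a}\cdot\frac{2m}{\alpha n}\right)^a \;=\; \left(\frac{2e\gamma m}{\alpha a}\right)^a \;\le\; \left(\frac{4e\gamma}{\alpha\delta}\right)^a,
\]
using $a\ge\delta m/2$. The $n$'s and $m$'s cancel: the base $4e\gamma/(\alpha\delta)$ is a constant, and it is small because $\gamma$ is chosen small relative to $\delta$ (this is precisely the hypothesis $\gamma\le(\alpha\delta/4e)^2\cdot\delta^{4/\delta}$ in the paper's proof), not because $C$ is large. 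This is the one nontrivial cancellation in the argument, and it is where the order-of-quantifiers discipline you mention actually bites: $\gamma$ must be chosen as a function of $\delta$ (and $\alpha$), before $\beta$ and $C$. The role of $C$ in this claim is only to guarantee the algorithm's estimates ($|S|\le\beta^2 m$ via Claim~\ref{MA2}); it plays no part in making $(4e\gamma/\alpha\delta)^{\delta m/2}$ small. If you replace your two-step bound on $\binom{\gamma n}{a}$ by this paired estimate, your proof coincides with the paper's.
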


\begin{proof}
We claim first that the number of such sets $I$ is at most
\begin{equation}\label{MAeq1}
  \sum_{B \in \B_n} \sum_{t = 0}^{\beta^2 m} \sum_{r \ge \delta m} \binom{n}{t} \binom{\gamma n}{r - t} \binom{\| \B_n \|}{m-r}.
\end{equation}
Indeed, let $S$ and $A$ be the selected and available sets at the end of the algorithm, set $t = |S|$, and recall that $t \le \beta^2 m$ by Claim~\ref{MA2}. We have at most ${n \choose t}$ choices for $S$ and, by Claim~\ref{MA3}, the set $S$ determines the set $A$. Let $B \in \B_n$ be such that $|A \setminus B| \le \gamma n$ and recall that $|I \setminus B| \ge \delta m$ by our assumption on $I$. Thus we must choose the set $B \in \B_n$, at least $\delta m - t$ elements of $A \setminus B$, and the remaining elements from $B$.

Note that $t \le \delta m/2$ by our choice of $\beta$ and so either $m \le (2\gamma/\delta)n$ or the number of choices for $I$ is zero. Since $\| B_n \| \ge \alpha n$ and $\gamma$ is small, it follows that the summand in~\eqref{MAeq1} is maximized exactly when $r = \delta m$. Now, using the inequalities ${n \choose k} \le \big( \frac{en}{k} \big)^k$ and
\begin{equation}\label{MAeq2}
{a \choose b - c} \, \le \, \left( \frac{b}{a-b} \right)^c {a \choose b},
\end{equation}
which holds for every $a > b > c \ge 0$, and since $t \le \delta m / 2$, $m \le (2\gamma/\delta)n \le (\alpha/2)n$ and $\| \B_n \| \ge \alpha n$, we can bound each summand in~\eqref{MAeq1} from above by
$$\left( \frac{en}{t} \right)^{t} \left( \frac{2e\gamma n}{\delta m} \right)^{\delta m - t}  \left( \frac{m}{\alpha n - m} \right)^{\delta m}  {\| \B_n \| \choose m}.$$   
Since $t \le \delta m / 2$ and $t \mapsto (c/t)^t$ is increasing on $(0,c/e)$, this is at most
$$\left( \frac{\delta m}{2 \gamma t} \right)^{t} \left( \frac{2e\gamma n}{\delta m} \cdot \frac{2m}{\alpha n} \right)^{\delta m}  {\| \B_n \| \choose m} \, \le \, \left( \frac{1}{\gamma} \right)^{\delta m/2} \left( \frac{4e\gamma}{\alpha \delta} \right)^{\delta m}  {\| \B_n \| \choose m} \, \le \, \delta^{2m} {\| \B_n \| \choose m}$$
if $\gamma \le (\alpha \delta / 4e)^2 \cdot \delta^{4/\delta}$. Since $m^2 \cdot \delta^{2m} \le \delta^m$, the claim follows.
\end{proof}

Finally, we deal with the case in which $|A| \le (\alpha - \beta)n$ for some $B \in \B_n$.

\begin{claim}\label{MA5}
There are at most $2^{-\eps m} {\|\B_n\| \choose m}$ sets $I \in \SF^{(\delta)}_{\ge}(\HH_n,\B_n,m)$ such that the Main Algorithm ends because $|A| \le (\alpha - \beta)n$.
\end{claim}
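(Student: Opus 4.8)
The plan is to mimic the counting argument from Claim~\ref{MA4}, but now the available set $A$ at termination has size at most $(\alpha-\beta)n$, which is \emph{smaller} than $\|\B_n\| \ge \alpha n$, so the gain comes not from $I$ being far from a $\B_n$-set but simply from the fact that we are choosing most of $I$ from a set that is linearly smaller than the binomial coefficient's upper parameter. First I would record, exactly as before, that by Claims~\ref{MA2} and~\ref{MA3} each such $I$ is encoded by a pair $(S,A)$ with $|S| = t \le \beta^2 m$, that there are at most $\binom{n}{t}$ choices for $S$, and that $S$ determines $A$ with $|A| \le (\alpha-\beta)n$; hence the number of sets $I$ in question is at most
\begin{equation*}
\sum_{t=0}^{\beta^2 m} \binom{n}{t}\binom{(\alpha-\beta)n}{m-t}.
\end{equation*}

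Next I would estimate this sum. Using $\binom{n}{t} \le (en/t)^t$ and the inequality~\eqref{MAeq2} in the form $\binom{(\alpha-\beta)n}{m-t} \le \big(\tfrac{m}{(\alpha-\beta)n-m}\big)^t \binom{(\alpha-\beta)n}{m}$, and then the crude bound $\binom{(\alpha-\beta)n}{m} \le \big(\tfrac{\alpha-\beta}{\alpha}\big)^m \binom{\alpha n}{m} \le \big(\tfrac{\alpha-\beta}{\alpha}\big)^m \binom{\|\B_n\|}{m}$ (here I use $\binom{b}{c}\le(b/a)^c\binom{a}{c}$ with $a=\|\B_n\|\ge\alpha n\ge b = (\alpha-\beta)n$), the whole sum is at most
\begin{equation*}
(\beta^2 m + 1)\left(\frac{em}{t}\cdot\frac{m}{(\alpha-\beta)n-m}\right)^{t}\bigg|_{\max}\cdot\left(\frac{\alpha-\beta}{\alpha}\right)^{m}\binom{\|\B_n\|}{m}.
\end{equation*}
The factor $\big(\tfrac{\alpha-\beta}{\alpha}\big)^m = (1-\beta/\alpha)^m$ is exponentially small in $m$; I must check the $t$-dependent factor does not swamp it. Since the summand is increasing in $t$ on the relevant range (one checks the ratio~\eqref{eq:ratio}-type quantity is $\ge 1$ there, exactly as in the proof of Theorem~\ref{thm:graphs-loc-dense}), the max over $t \le \beta^2 m$ is attained at $t = \beta^2 m$, giving a factor at most $\big(\tfrac{e m}{\beta^2 m}\cdot\tfrac{m}{(\alpha-\beta)n - m}\big)^{\beta^2 m}$; since we may assume $m \le (\alpha-2\beta)n$ (otherwise $\SF^{(\delta)}_{\ge}$ is empty by the choice of $\beta$), we have $(\alpha-\beta)n - m \ge \beta n \ge \beta m$, so this factor is at most $\big(\tfrac{e}{\beta^3}\big)^{\beta^2 m}$, a constant raised to the power $\beta^2 m$. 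Choosing $\eps = \eps(\beta)$ small enough that $2^{-\eps}\ge (e/\beta^3)^{\beta^2}\cdot(1-\beta/\alpha)^{1/2}$, say, and absorbing the polynomial prefactor $\beta^2 m + 1$ into the remaining $(1-\beta/\alpha)^{m/2}$, the bound $2^{-\eps m}\binom{\|\B_n\|}{m}$ follows for $n$ (hence $m$) large.

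The only real point requiring care — and the step I expect to be the mildest obstacle — is verifying that the competition between the exponentially small factor $(1-\beta/\alpha)^m$ and the exponentially large factor $(e/\beta^3)^{\beta^2 m}$ is won by the former; this is why it matters that the latter's exponent carries the small multiplicative constant $\beta^2$ rather than a constant of size $1$, which in turn is exactly what Claim~\ref{MA2} (i.e.\ $|S| \le \beta^2 m$, forced by the choice $C = 3/\beta^7$) provides. With $\beta$ fixed first and then $\eps$ chosen small depending on $\beta$, there is enough room. Finally, combining Claims~\ref{MA4} and~\ref{MA5} — the two exhaustive cases for how the Main Algorithm terminates — yields $\big|\SF^{(\delta)}_{\ge}(\HH_n,\B_n,m)\big| \le \big(2^{-\eps m} + \delta^m|\B_n|\big)\binom{\|\B_n\|}{m}$, completing the proof of Theorem~\ref{algprop}.
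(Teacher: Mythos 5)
Your proposal is correct and follows essentially the same route as the paper: bound the number of encodings by $\sum_{t\le\beta^2 m}\binom{n}{t}\binom{(\alpha-\beta)n}{m-t}$ via Claims~\ref{MA2} and~\ref{MA3}, use $\binom{n}{t}\le(en/t)^t$, inequality~\eqref{MAeq2}, and $\binom{b}{c}\le(b/a)^c\binom{a}{c}$, and then win the competition between $(e/\beta^3)^{\beta^2 m}$ and $(1-\beta/\alpha)^m\le e^{-\beta m/\alpha}$ by taking $\beta$ fixed and $\eps$ small. The only cosmetic difference is that the paper bounds each summand directly (using monotonicity of $t\mapsto(c/t)^t$ to evaluate at $t=\beta^2 m$) rather than first arguing the summand itself is increasing in $t$, but both handle the sum correctly.
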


\begin{proof}
As in the previous claim, we have $t = |S| \le \beta^2 m$, by Claim~\ref{MA2}, and the set $S$ determines the set $A$, by Claim~\ref{MA3}. Thus, the number of choices for $I$ is at most
\begin{equation}\label{MAeq5}
  \sum_{t=0}^{\beta^2 m} {n \choose t} {(\alpha - \beta)n \choose m - t}.
\end{equation}
Now, recall that either $m \le \left( \alpha - 2\beta  \right) n$ or $\SF^{(\delta)}_{\ge}(\HH_n,\B_n,m)$ is empty. Thus, estimating each summand in~\eqref{MAeq5} as in the proof of Claim~\ref{MA4}, we obtain
$${n \choose t} {(\alpha - \beta)n \choose m - t} \, \le \, \bigg( \frac{en}{t} \bigg)^t \left( \frac{m}{(\alpha - \beta)n - m} \right)^t {(\alpha - \beta)n \choose m} \, \le \, \left( \frac{e m}{ \beta t} \right)^t {(\alpha - \beta)n \choose m}.$$
Now, using the inequality $\binom{b}{c} \le \left( \frac{b}{a} \right)^c \binom{a}{c}$, which is valid for all $a > b > c \ge 0$, and recalling that $t \le \beta^2 m$ and that $t \mapsto (c/t)^t$ is increasing on $(0,c/e)$, we get 
$$\left( \frac{e m}{ \beta t} \right)^t {(\alpha - \beta)n \choose m} \, \le \, \left( \frac{e}{ \beta^3} \right)^{\beta^2 m} \left( \frac{\alpha - \beta}{\alpha} \right)^m {\alpha n \choose m} \, \le \, \left( \frac{e}{ \beta^3} \right)^{\beta^2 m} e^{- \beta m / \alpha} {\alpha n \choose m}.$$
Since $\| B_n \| \ge \alpha n$, the right-hand side is at most $\frac{1}{m} \cdot 2^{-\eps m} { \| \B_n \| \choose m}$ if $\beta > 0$ and $\eps = \eps(\beta) > 0$ are sufficiently small, as required. 
\end{proof}

Combining Claims~\ref{MA4} and~\ref{MA5}, we obtain Theorem~\ref{algprop}.
\end{proof}

\section{Janson argument}\label{JansonSec}

In this section, we shall complete the proof of Theorem~\ref{genthm} by showing that, under certain conditions, almost all independent (i.e., sum-free) sets $I$ of size $m$ in $\HH_n$ either satisfy $I \subseteq B$ for some $B \in \B_n$, or $|I \setminus B| \ge \delta m$ for every $B \in \B_n$. The key properties of $\HH$ which we will use are that $\Delta_2(\HH_n) = O(1)$, and that $\delta(\HH_n,\B_n) = \Omega(n)$; our key tool will be Janson's inequality. An argument similar to that presented in this section was used in~\cite{BMS} to study sum-free sets in random subsets of Abelian groups.

Given a hypergraph $\HH_n$, a family of sets $\B_n$ and $\delta > 0$, we define
$$\SF^{(\delta)}_{\le}(\HH_n,\B_n,m) \, := \, \Big\{ I \in \SF(\HH_n,m) \,\colon\, |I \setminus B| \le \delta m \textup{ for some } B \in \B_n \Big\}.$$
The following proposition shows that, if $\delta > 0$ is sufficiently small, then almost all independent sets in $\SF^{(\delta)}_{\le}(\HH_n,\B_n,m)$ are contained in some $B \in \B_n$. We write $2^B$ to denote the power set of $B$, i.e., the family of all subsets of $B$.

\begin{prop}\label{prop:Janson}
Let $\alpha > 0$, let $\HH = (\HH_n)_{n \in \N}$ be a sequence of $3$-uniform hypergraphs with $\Delta_2(\HH_n) = O(1)$, and let $\B = (\B_n)_{n \in \N}$ be a family of sets with $\| \B_n \| \ge \alpha n$. For every $\beta > 0$, there exists constants $\delta > 0$ and $C_0 > 0$ such that the following holds. If $\delta(\HH_n,\B_n) \ge \beta n$ and $C \ge C_0$, then
$$\Big| \SF^{(\delta)}_{\le}(\HH_n,\B_n,m) \setminus \bigcup_{B \in \B_n} 2^B \Big| \, \le \, n^{-C} |\B_n| {\|\B_n\| \choose m}$$
for every $m \ge C\sqrt{n \log n}$.
\end{prop}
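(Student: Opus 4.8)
The plan is to show that if $I \in \SF^{(\delta)}_{\le}(\HH_n, \B_n, m)$ is \emph{not} contained in any $B \in \B_n$, then the number of ``extensions'' of the bulk of $I$ inside a fixed $B$ is forced to be tiny by Janson's inequality, because every vertex outside $B$ closes so many Schur triples with $B$ that it destroys a positive fraction of the available space. Concretely, fix $B \in \B_n$ with $|I \setminus B| \le \delta m$, write $I_0 = I \cap B$ and $W = I \setminus B$ with $1 \le |W| \le \delta m$ (the case $W = \emptyset$ being excluded by hypothesis). The first step is to union-bound over the choice of $B$ (costing a factor $|\B_n|$), over $w := |W| \ge 1$, and over the set $W$ itself (costing at most $\binom{n}{w} \le n^w$ choices): it then suffices to show that for each fixed $B$ and fixed nonempty $W$ of size $w$, the number of independent sets $I_0 \subseteq B$ with $|I_0| = m - w$ and $I_0 \cup W$ independent is at most $n^{-C'}\binom{\|\B_n\|}{m}$ for a suitable $C'$ absorbing the $n^w$ and $|\B_n| = n^{O(1)}$ factors.

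\medskip
\noindent The second step is the Janson estimate. Work in the probability space where each element of $B$ is included independently with probability $p = (m-w)/\|\B_n\|$; then the number of $(m-w)$-subsets $I_0$ of $B$ with $I_0 \cup W$ independent is at most $\Pr[\text{no bad pair in } R] \cdot \binom{\|B\|}{m-w} \cdot (\text{polynomial correction})$, where $R$ is the random set and a ``bad pair'' is a pair $\{x,y\} \subseteq B$ with $\{x,y,w_0\}$ a Schur triple for some $w_0 \in W$ — equivalently, an edge of the graph $\G_W[B]$ (in the notation of the algorithm section, with $T = W$). To apply Janson's inequality I need: (i) a lower bound on $\mu := \Ex[\text{number of bad pairs in } R] = p^2 e(\G_W[B])$; and (ii) an upper bound on $\Delta := \sum_{\text{bad pairs sharing a vertex}} p^3$ controlling dependencies. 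For (i): since $\delta(\HH_n, \B_n) \ge \beta n$, every single vertex $w_0 \in W$ lies in at least $\beta n$ Schur triples with two endpoints in $B$, and each such triple contributes an edge to $\G_W[B]$; using $\Delta_2(\HH_n) = O(1)$ to control multiplicity and double-counting across the $w$ vertices of $W$, we get $e(\G_W[B]) \ge \beta n / 3$ (it suffices to use one vertex $w_0 \in W$), hence $\mu \ge p^2 \beta n / 3 = \Omega\!\big((m^2/n) \big) = \Omega(\log n)$ once $m \ge C\sqrt{n \log n}$ — and by choosing $C$ large this is as large a multiple of $\log n$ as we like. For (ii): $\Delta \le p^3 \cdot e(\G_W[B]) \cdot 2\Delta(\G_W)$, and $\Delta(\G_W) \le |W| \cdot \Delta_2(\HH_n) = O(\delta m)$, so $\Delta = O\big(p^3 \cdot n \cdot \delta m\big) = O(\delta) \cdot p \cdot \mu = o(\mu)$ provided $\delta$ is chosen small (and $p = o(1)$, which holds since $m = o(n)$). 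Then Janson's inequality gives $\Pr[\text{no bad pair}] \le \exp(-\mu + \Delta/2) \le \exp(-\mu/2) \le n^{-\Omega(C^2)}$, which beats $n^{w} \cdot |\B_n| \cdot n^{O(1)}$ with room to spare once $C \ge C_0$, after converting the binomial-coefficient ratio $\binom{\|\B_n\|}{m-w}/\binom{\|\B_n\|}{m}$ — which is at most $(\|\B_n\|/(m-w))^w \le n^w$ — into yet another polynomial factor absorbed by the exponential saving.

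\medskip
\noindent The main obstacle I anticipate is bookkeeping the small-$w$ regime cleanly: when $w$ is a constant (or even $w=1$) the factor $n^w$ is harmless, but I must make sure the Janson saving $\exp(-\mu/2)$, with $\mu = \Omega(C^2 \log n)$, genuinely dominates the product of $|\B_n| = n^{O(1)}$, the $n^w$ from choosing $W$, the $n^w$ from the binomial ratio, and any $O(1)^m$-type correction from passing between ``probability of avoiding all bad pairs'' and ``count of independent subsets'' — this is exactly where one needs $C \ge C_0(\HH, \B, \beta)$ and $\delta = \delta(\beta)$ small, with $\delta$ small enough that $\Delta = o(\mu)$ holds uniformly in $w \le \delta m$. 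A secondary technical point is that the exponent $-C$ demanded in the conclusion should be read as: for every target exponent we can choose $C_0$ so that $C \ge C_0$ delivers it, since $\mu$ grows quadratically in $C$ while the competing factors grow only polynomially in $n$ with a fixed exponent and the verification is that $C^2 \gg C$ eventually. Once these constants are pinned down, summing the per-$(B,W)$ bound over the at most $|\B_n| \cdot \sum_{w \le \delta m} n^w \le |\B_n| \cdot n^{\delta m}$ terms — wait, $n^{\delta m}$ is too large, so instead one keeps the $\binom{n}{w}$ inside the exponential comparison term by term rather than summing crudely: for each fixed $w$ the bound is $n^{-2C}\binom{\|\B_n\|}{m}$ say, and there are at most $m \le n$ values of $w$ and $|\B_n| = n^{O(1)}$ choices of $B$, so the total is $n^{-2C} \cdot n \cdot n^{O(1)} \binom{\|\B_n\|}{m} \le n^{-C}|\B_n|\binom{\|\B_n\|}{m}$, which is the claimed bound.
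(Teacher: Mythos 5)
Your high-level strategy — fix $B \in \B_n$, fix the exterior part $W = I \setminus B$, and apply Janson's inequality to the ``Cayley graph'' $\G_W[B]$ — is precisely the route the paper takes (compare Lemmas~\ref{lem:SF1} and~\ref{lem:Zhao}). However, two of your intermediate claims are false in the regime that matters, and together they leave a real gap.

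First, the claim $\Delta = o(\mu)$. You compute $\Delta/\mu \le 2\,p\,\Delta(\G_W) = O(p\,\delta m)$, and then assert this is $o(1)$ ``since $p = o(1)$.'' But $p \approx m/n$, so $p\,\delta m \approx \delta m^2/n \ge \delta C^2\log n$, which \emph{diverges} as $n \to \infty$ for any fixed $\delta > 0$; indeed $\Delta/\mu \to 0$ only when $w = o(n/m)$, and when $m = \Theta(\sqrt{n\log n})$ the range $w \le \delta m$ extends far beyond $n/m = \Theta(\sqrt{n/\log n})$. So for most of the range of $w$ you cannot use the form $\Pr[\cdot] \le e^{-\mu + \Delta/2} \le e^{-\mu/2}$; you are forced into the other branch $e^{-\mu^2/(2\Delta)}$, which is exactly why Lemma~\ref{lem:Zhao} in the paper carries two terms, $n^{-Cd}$ and $e^{-\beta m}$.

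Second, and more seriously, your per-$(B,W)$ Janson saving is $e^{-\mu/2}$ with $\mu = \Omega(C^2\log n)$ \emph{independent of $w$}, because you chose to lower-bound $e(\G_W[B])$ using only a single vertex of $W$. That polynomial saving cannot absorb the union bound over $W$ once $w$ is large: $\binom{n}{w} \cdot \binom{\|\B_n\|}{m-w}\big/\binom{\|\B_n\|}{m} \approx (em/(\alpha w))^{w}$, which for $w = \Theta(\delta m)$ is $\exp(\Theta(m))$ and overwhelms any $n^{-\Omega(C^2)}$. Your final paragraph notices the tension (``$n^{\delta m}$ is too large'') but then simply asserts that ``for each fixed $w$ the bound is $n^{-2C}\binom{\|\B_n\|}{m}$,'' which is not justified by the estimates you actually proved. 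The fix — exactly what the paper does — is to use $\delta(\HH_n,\B_n) \ge \beta n$ for \emph{every} vertex of $W$, together with $\Delta_2 = O(1)$ to control overcounting, yielding $e(\G_W[B]) \ge \beta w n/\Delta_2 = \Omega(wn)$. Then $\mu = \Omega(w m^2/n)$ scales with $w$, and the case split $\Delta \le \mu$ vs.\ $\Delta > \mu$ gives a bound of the form $|B|^{-\Theta(Cw)} + e^{-\Theta(\beta m)}$; the first term beats $\binom{n}{w}$ for every $w$, and the second is handled separately exactly as in Cases~2 and~3 of the paper's proof. Until you make $\mu$ grow with $w$ and treat the $\Delta > \mu$ branch of Janson, the argument does not close.
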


Note that $\delta$ and $C_0$ in the statement of Proposition~\ref{prop:Janson} may depend on $\HH$, $\B$, $\alpha$ and $\beta$. We begin by recalling the Janson inequalities, and some basic facts about the hypergeometric distribution.

\subsection{The hypergeometric distribution}

The following well-known inequality (see~\cite[page 35]{JLR}, for example) allows us to deduce bounds in the hypergeometric distribution from results on product measure. For completeness we give a proof.

\begin{lemma}[Pittel's inequality]\label{pittel}
Let $m,n \in \N$, and set $p = m/n$. For any property $\Q$ on $[n]$ we have
$$\Pr\big( \Q \text{ holds for a random $m$-set} \big) \, \le \, 3\sqrt{m} \cdot  \Pr\big( \Q \text{ holds for a random $p$-subset of $[n]$} \big).$$
Moreover, if $\Q$ is monotone decreasing and $m \le n - 1$, then
$$\Pr\big( \Q \text{ holds for a random $m$-set} \big) \, \le \, C \cdot  \Pr\big( \Q \text{ holds for a random $p$-subset of $[n]$} \big)$$
for some absolute constant $C > 0$.
\end{lemma}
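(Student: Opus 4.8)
The plan is to relate the two models by conditioning the $p$-random subset on its cardinality. Let $X$ denote a $p$-random subset of $[n]$, so that $|X| \sim \Bin(n,p)$ and, conditioned on $|X|=k$, the set $X$ is uniformly distributed among the $k$-subsets of $[n]$. Writing $f(k) := \Pr(\mathcal{Q}\text{ holds for a random }k\text{-set})$, the law of total probability gives the exact identity
\[
\Pr\big(\mathcal{Q}\text{ holds for }X\big) \;=\; \sum_{k=0}^{n} \Pr\big(|X|=k\big)\, f(k).
\]
For the first bound, I would discard every term except $k=m$, obtaining $\Pr(\mathcal{Q}\text{ holds for }X)\ge \Pr(|X|=m)\,f(m)$, and then use the standard estimate $\Pr(|X|=m)\ge \tfrac{1}{3\sqrt m}$ (recall $|X|\sim\Bin(n,m/n)$ has mean $m$), the cases $m\in\{0,n\}$ being trivial since then $\Pr(|X|=m)=1$. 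Rearranging yields $f(m)\le 3\sqrt m\cdot\Pr(\mathcal{Q}\text{ holds for }X)$, as required.

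For the second bound, observe that when $\mathcal{Q}$ is monotone decreasing the function $f$ is nonincreasing on $\{0,1,\dots,n\}$: given a uniform $m$-set $Y$, a uniform $k$-subset $Z\subseteq Y$ is itself a uniform $k$-set, and $Y\in\mathcal{Q}$ forces $Z\in\mathcal{Q}$, so $f(k)\ge f(m)$ whenever $k\le m$. Keeping only the terms with $k\le m$ in the identity above,
\[
\Pr\big(\mathcal{Q}\text{ holds for }X\big) \;\ge\; \sum_{k=0}^{m} \Pr\big(|X|=k\big)\, f(k) \;\ge\; f(m)\cdot\Pr\big(|X|\le m\big).
\]
Since $m = np$ is an integer, it is a median of $\Bin(n,p)$, hence $\Pr(|X|\le m)\ge\tfrac12$, and therefore $f(m)\le 2\cdot\Pr(\mathcal{Q}\text{ holds for }X)$, so $C=2$ works. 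The hypothesis $m\le n-1$ serves only to exclude the degenerate case $p=1$, in which both sides are manifestly equal.

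The only inputs that are not pure bookkeeping are the two classical facts about $\Bin(n,p)$ with integer mean $m=np$: the pointwise lower bound $\Pr(\Bin(n,p)=m)\ge\tfrac{1}{3\sqrt m}$ and the fact that $m$ is then a median. The first is a one-line Stirling computation (using $\sqrt{2\pi k}\,(k/e)^k\le k!\le\sqrt{2\pi k}\,(k/e)^k e^{1/(12k)}$ one gets $\Pr(\Bin(n,p)=m)\ge \tfrac{1}{\sqrt{2\pi m}}\,e^{-1/6}>\tfrac{1}{3\sqrt m}$ for $1\le m\le n-1$); the second can simply be quoted. I expect no genuine obstacle here — the argument is short — and the only point needing a little care is arranging the constant in the Stirling estimate to come out below $3$.
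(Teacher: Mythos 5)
Your proof is correct and follows essentially the same route as the paper: condition the $p$-random subset on its size, isolate the $k=m$ term (or the terms $k\le m$) and use the pointwise bound $\Pr(\Bin(n,m/n)=m)\ge 1/(3\sqrt m)$ for part one, and monotonicity of $f(k)=\Pr(\Q$ holds for a random $k$-set$)$ for part two. Two small points of comparison. First, where you establish $f(k)\ge f(m)$ for $k\le m$ by the direct coupling (take a uniform $k$-subset of a uniform $m$-set), the paper invokes the Local LYM inequality for the level sets $\Q_k$; these are two wordings of the same fact and either is fine. Second, your treatment of the median step is actually cleaner than the paper's: since $np=m$ is an integer, $m$ is a median of $\Bin(n,p)$, so $\Pr(|X|\le m)\ge 1/2$ and you may take $C=2$ outright; the paper instead bounds $\Pr(\Bin(n,p)=\lceil pn\rceil)$ by $(1-1/n)^{n-1}\to 1/e$ and deduces only $\Pr(|X|\le m)\ge 1/2-1/e+o(1)$, which is weaker and adds an unnecessary ``$n$ sufficiently large'' qualifier. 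Your version removes this slack. The Stirling computation you sketch for the $1/(3\sqrt m)$ bound is also correct, as is the observation that the hypothesis $m\le n-1$ only rules out the degenerate $p=1$ case.
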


\begin{proof}
For the first part, simply note that a random $p$-subset of $[n]$ has size $m = pn$ with probability at least $1/(3\sqrt{m})$. If $\Q$ is monotone decreasing, say, then we apply the `Local LYM inequality' to $\Q_m$, the set of $m$-sets in $\Q$, and deduce that
$$\Pr\big( \Q \text{ holds for a random $k$-set} \big) \ge \, \Pr\big( \Q \text{ holds for a random $m$-set} \big)$$
for every $k \le m$. It is well-known that the median of the binomial distribution lies between $\lfloor pn \rfloor$ and $\lceil pn \rceil$, and if $m \le n - 1$ then it is easy to see that $\Bin(n,p) = \lceil pn \rceil$ has probability at most $(1 - 1/n)^{n-1} \to 1/e$ as $n \to \infty$. Thus, if $m \le n - 1$ and $n$ is sufficiently large, then a random $p$-subset of $[n]$ has size at most $m = pn$ with probability at least $1/2 - 1/e + o(1)$ as $n \to \infty$, and the result follows.
\end{proof}

The following result is an easy corollary of Janson's inequality (see~\cite{AS,JLR}), combined with Pittel's inequality.

\begin{lemma}[Hypergeometric Janson Inequality]\label{HJI}
Suppose that $\{U_i\}_{i \in I}$ is a family of subsets of an $n$-element set $X$ and let $m \in \{0, \ldots, n\}$. Let
$$\mu = \sum_{i \in I} (m/n)^{|U_i|} \quad \text{and} \quad \Delta = \sum_{i \sim j} (m/n)^{|U_i \cup U_j|},$$
where the second sum is over ordered pairs $(i,j)$ such that $i \neq j$ and $U_i \cap U_j \neq \emptyset$. Let $R$ be a uniformly chosen random $m$-subset of $X$. Then 
$$\Pr\big( U_i \nsubseteq R \text{ for all $i \in I$} \big) \,\le\, C \cdot \max\left\{e^{-\mu/2}, e^{-\mu^2/(2\Delta)}\right\},$$
where $C > 0$ is the constant in Pittel's inequality.
\end{lemma}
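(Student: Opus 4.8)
The plan is to prove the inequality first in the product-measure (binomial) setting, where it is an immediate application of Janson's inequalities, and then transfer it to the hypergeometric setting using Pittel's inequality (Lemma~\ref{pittel}).

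First I would set $p = m/n$ and let $R_p$ be a $p$-random subset of $X$. For each $i \in I$ let $A_i$ be the increasing event $\{U_i \subseteq R_p\}$, so that $\Pr(A_i) = p^{|U_i|}$ and, for $i \neq j$, $\Pr(A_i \cap A_j) = p^{|U_i \cup U_j|}$. In particular $\sum_{i} \Pr(A_i) = \mu$ and $\sum_{i \sim j} \Pr(A_i \cap A_j) = \Delta$ with exactly the $\mu$ and $\Delta$ of the statement, since $i \sim j$ precisely when $U_i \cap U_j \neq \emptyset$. Janson's inequality (see~\cite{AS,JLR}) gives $\Pr\big(\bigcap_i \overline{A_i}\big) \le e^{-\mu + \Delta/2}$, and the generalized form gives $\Pr\big(\bigcap_i \overline{A_i}\big) \le e^{-\mu^2/(2\Delta)}$ whenever $\Delta \ge \mu$. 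If $\Delta \le \mu$ then the first bound is at most $e^{-\mu/2}$, while if $\Delta \ge \mu$ the second applies; in either case
$$\Pr\big( U_i \nsubseteq R_p \text{ for all } i \in I \big) \, \le \, \max\big\{ e^{-\mu/2}, e^{-\mu^2/(2\Delta)} \big\}.$$

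Next I would transfer this to a uniformly random $m$-set. The property $\Q$ that ``$U_i \nsubseteq R$ for every $i \in I$'' is monotone decreasing, since enlarging $R$ can only cause more of the $U_i$ to become subsets of $R$. We may assume $m \le n-1$: for $m = n$ we have $R = X$, so the left-hand side is $0$ unless $I = \emptyset$, in which case the bound is trivial. Applying the monotone-decreasing case of Pittel's inequality (Lemma~\ref{pittel}) to $\Q$, with $p = m/n$, therefore yields
$$\Pr\big( U_i \nsubseteq R \text{ for all } i \in I \big) \, \le \, C \cdot \Pr\big( U_i \nsubseteq R_p \text{ for all } i \in I \big),$$
where $C$ is the absolute constant from that lemma, and combining this with the previous display completes the proof.

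The argument is essentially routine once Janson's and Pittel's inequalities are in hand; the only points requiring genuine care are to verify that the quantities $\mu$ and $\Delta$ appearing when one applies Janson in the product measure coincide \emph{exactly} with those defined in the statement — which they do, because $\Pr(U_i \subseteq R_p) = p^{|U_i|}$ and $\Pr(U_i \cup U_j \subseteq R_p) = p^{|U_i \cup U_j|}$ — and to confirm that $\Q$ is monotone decreasing so that the \emph{second} (rather than the weaker first) part of Pittel's inequality applies, costing only the absolute constant $C$ instead of a factor of $3\sqrt{m}$. This last point is what makes the bound strong enough to be useful in the proof of Proposition~\ref{prop:Janson}.
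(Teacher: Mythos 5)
Your proof is correct and is exactly the argument the paper has in mind (the paper only remarks that the lemma is ``an easy corollary of Janson's inequality combined with Pittel's inequality'' without writing out the details). You correctly identify that the event is monotone decreasing so that the second, sharper part of Lemma~\ref{pittel} applies, and that the case split on $\Delta \lessgtr \mu$ gives the stated maximum of the two exponential bounds.
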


We now return to the proof of Proposition~\ref{prop:Janson}.

\subsection{Proof of Proposition~\ref{prop:Janson}}

We begin by partitioning $\SF^{(\delta)}_{\le}(\HH_n,\B_n,m)$ according to the set $B \in \B_n$ such that $|I \setminus B| \le \delta m$, and also according to the set $S = I \setminus B$. (Technically there could be more than one such set $B$, so in fact this might be a cover, rather than a partition.) Set
$$I(B,S) \, := \, \Big| \Big\{ I \in \SF(\HH_n,m) \,\colon\, I \setminus B = S \Big\} \Big|.$$ 
We shall prove the following lemma.

\begin{lemma}\label{lem:SF1}
Let $\HH = (\HH_n)_{n \in \N}$ be a sequence of $3$-uniform hypergraphs with $\Delta_2(\HH_n) = O(1)$. For every sufficiently small $\beta > 0$, there exists a $C_0 > 0$ such that the following holds. Let $C \ge C_0$, let $B \subseteq [n]$ with $\delta(\HH_n,B) \ge \beta n$, and let $S \subseteq [n] \setminus B$. Then, writing $k = |S|$, 
$$I(B,S) \; \le \; \Big( |B|^{-5Ck} + e^{-\beta^3 m} \Big) {|B| \choose {m-k}}$$
for every $m \ge C \sqrt{n \log n}$. 
\end{lemma}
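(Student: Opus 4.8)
The plan is to fix $B$ and $S$ with $|S| = k \le \delta m$, and to bound $I(B,S)$ by counting independent sets $I$ with $I \setminus B = S$, i.e.\ sets of the form $I = S \cup R$ where $R$ is an $(m-k)$-subset of $B$ that, together with $S$, creates no edge of $\HH_n$. The key observation is that for each $v \in S$, the requirement that $I$ be independent forbids $R$ from containing both endpoints of any edge $\{x,y,v\} \in \HH_n$ with $x,y \in B$. In other words, writing $U$ for the collection of pairs $\{x,y\} \subseteq B$ such that $\{x,y,v\} \in \HH_n$ for some $v \in S$ (together with any single vertices $x \in B$ lying in an edge $\{x,v,v'\}$ with $v,v' \in S$, though these are lower-order), the set $R$ must avoid containing any member of $U$ as a subset. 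Since $\delta(\HH_n,B) \ge \beta n$, each $v \in S$ contributes at least $\beta n / 2$ such pairs (dividing by $2$ to account for overcounting within a single $v$), so $|U| \ge \beta n k / (2\Delta_2(\HH_n)) = \Omega(nk)$. We then apply the Hypergeometric Janson Inequality (Lemma~\ref{HJI}) with ground set $X = B$, the family $\{U_i\}$ as above (mostly sets of size $2$), and $m$ replaced by $m - k$, to bound the probability that a random $(m-k)$-subset $R$ of $B$ avoids all the $U_i$; multiplying this probability by $\binom{|B|}{m-k}$ gives $I(B,S)$.

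The main computation is to estimate $\mu$ and $\Delta$. With $p = (m-k)/|B| = \Theta(m/n)$, each pair $U_i$ contributes $p^2 = \Theta(m^2/n^2)$ to $\mu$, so $\mu = \Omega(nk \cdot m^2/n^2) = \Omega(km^2/n)$. Since $m \ge C\sqrt{n\log n}$, we get $\mu = \Omega(Ck\log n) = \Omega(Ck \log|B|)$ (using $|B| \ge \alpha n$ so $\log|B| = \Theta(\log n)$). For $\Delta$: two pairs $U_i, U_j$ overlap in a vertex, so $|U_i \cup U_j| = 3$, contributing $p^3$; the number of such overlapping ordered pairs is at most $\sum_i |U_i| \cdot (\text{pairs sharing a given vertex})$. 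Here we use $\Delta_2(\HH_n) = O(1)$ to control how many edges through a fixed pair, and that the total number of pairs in $U$ through a fixed vertex of $B$ is $O(k)$ (at most $\Delta_2 \cdot |S|$ via the common vertex in $S$... more carefully, $O(n)$ from the structure, but weighted appropriately), giving $\Delta = O(|U| \cdot k \cdot p^3) = O(nk^2 \cdot m^3/n^3) = O(k^2 m^3 / n^2)$. Hence $\mu^2 / \Delta = \Omega\big( (km^2/n)^2 / (k^2 m^3/n^2) \big) = \Omega(m)$. Feeding these into Lemma~\ref{HJI}, $\Pr(R \text{ avoids all } U_i) \le C \max\{ e^{-\mu/2}, e^{-\mu^2/(2\Delta)}\} \le |B|^{-5Ck} + e^{-\beta^3 m}$ once $C$ is large enough, which is exactly the claimed bound after multiplying by $\binom{|B|}{m-k}$.

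The step I expect to be the main obstacle is getting the bound on $\Delta$ clean enough — specifically, correctly bounding the number of overlapping pairs $(i,j)$ in the Janson sum. One must be careful that an edge $\{x,y,v\}$ with $v \in S$ gives the pair $\{x,y\}$, and two such pairs $\{x,y\}$, $\{x,z\}$ sharing the vertex $x \in B$ could come from edges through the same or different elements of $S$; bounding the number of pairs through a fixed $x \in B$ requires using $\Delta_2(\HH_n) = O(1)$ together with $|S| = k$ to see there are $O(k)$ of them, and also handling the genuinely degenerate contributions (single-vertex sets $U_i$, arising when two elements of $S$ together with one element of $B$ form an edge) which only help but clutter the bookkeeping. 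A secondary subtlety is the interplay of the two error terms: the $e^{-\mu/2}$ term must be shown to be at most $|B|^{-5Ck}$ (this uses $\mu = \Omega(Ck\log|B|)$ and choosing $C_0$ large), while the $e^{-\mu^2/(2\Delta)}$ term is handled by $\mu^2/\Delta = \Omega(m)$ and absorbing the constant into $\beta^3$; one should double-check these do not interfere when $k$ is small (say $k = 0$ or $k = O(1)$), where the first term degenerates but the second still delivers $e^{-\Omega(m)}$.
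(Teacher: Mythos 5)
Your proposal is correct and follows essentially the same route as the paper. The collection $U$ of pairs $\{x,y\}\subseteq B$ with $\{x,y,v\}\in\HH_n$ for some $v\in S$ is exactly the edge set of the paper's Cayley graph $\G_S[B]$, your degree bound $O(k)$ per vertex is Observation~\ref{lem:dreg}, your lower bound $|U| = \Omega(kn)$ from $\delta(\HH_n,B)\ge\beta n$ is the paper's estimate on $e(\G_S[B])$, and your Janson computation of $\mu$ and $\Delta$ reproduces verbatim the one inside Lemma~\ref{lem:Zhao}. The only difference is presentational: the paper factors the Janson step through the auxiliary Lemma~\ref{lem:Zhao} (an abstract statement about independent sets in graphs of bounded maximum degree and $\Omega(dn)$ edges), whereas you do the same computation inline. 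The subtleties you flagged — the single-vertex constraints coming from edges with two vertices in $S$ (which only help and can be discarded), the degenerate behaviour of the $|B|^{-5Ck}$ term when $k$ is tiny (covered by $e^{-\beta^3 m}$), and the need to control overlapping pairs in the $\Delta$ sum via the $O(k)$ degree bound — are all handled correctly and match the paper's treatment.
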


In order to prove Lemma~\ref{lem:SF1}, we shall apply the following lemma to the Cayley graph of $S$, restricted to $B$. The lemma is a straightforward consequence of the Hypergeometric Janson's inequality. 

\begin{lemma}\label{lem:Zhao}
For every $\beta > 0$, there exists a constant $C_0 > 0$ such that the following holds. Let $\G$ be a graph on $n$ vertices with maximum degree at most $d$. If
$$e(\G) \, \ge \, 4\beta d n,$$
then for every $C \ge C_0$,
$$I(\G,m) \, \le \, \Big( n^{-Cd} + e^{-\beta m} \Big) {n \choose m}$$
for every $m \ge C \sqrt{n \log n}$.
\end{lemma}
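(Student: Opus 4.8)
The plan is to apply the Hypergeometric Janson Inequality (Lemma~\ref{HJI}) with $X = V(\G)$, $I = E(\G)$, and $U_e = e$ for each edge $e$, so that the event ``$U_i \nsubseteq R$ for all $i$'' is exactly the event that a uniformly random $m$-subset $R$ of $V(\G)$ is independent in $\G$. Then $I(\G,m) = \binom{n}{m} \cdot \Pr(R \text{ is independent})$, so it suffices to bound this probability by $n^{-Cd} + e^{-\beta m}$.

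First I would compute the two Janson parameters. Since every $U_e$ has size $2$, we get $\mu = \sum_{e \in E(\G)} (m/n)^2 = e(\G) \cdot (m/n)^2$. Using the hypothesis $e(\G) \ge 4\beta d n$ and $m \ge C\sqrt{n\log n}$, this gives $\mu \ge 4\beta d n \cdot (m/n)^2 = 4\beta d m^2/n \ge 4\beta d C^2 \log n \ge 4\beta m$ (the last step using $m^2/n \ge C^2 \log n$, hence $m \le$ something, or more directly $\mu \ge 4\beta dm \cdot (m/n) $ and $m/n \le 1$ — in fact the cleanest route is $\mu \ge 4\beta d m^2/n$ and then split into the two cases below). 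For $\Delta$: two edges $e \sim f$ intersect in exactly one vertex (if they shared two they'd be equal), so $|U_e \cup U_f| = 3$ and $\Delta = \sum_{e \sim f} (m/n)^3$. The number of ordered intersecting pairs is at most $\sum_{v} \deg(v)(\deg(v)-1) \le n d^2$ by the maximum-degree bound, but a sharper count is $\sum_v \deg(v)^2 \le d \sum_v \deg(v) = 2d\, e(\G)$, so $\Delta \le 2d\, e(\G) \cdot (m/n)^3 = 2d \mu \cdot (m/n)$.

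Now I would feed this into the bound $\Pr(R \text{ indep}) \le C \max\{e^{-\mu/2}, e^{-\mu^2/(2\Delta)}\}$. For the second term, $\mu^2/(2\Delta) \ge \mu^2 / (4d\mu \cdot m/n) = \mu n/(4dm) \ge (4\beta d m^2/n)\cdot n/(4dm) = \beta m$, so $e^{-\mu^2/(2\Delta)} \le e^{-\beta m}$, absorbing the leading constant $C$ into a slightly smaller $\beta$ (or noting $C e^{-\beta m} \le e^{-\beta m/2}$ for $m$ large, and relabelling). For the first term, $\mu/2 \ge 2\beta d m^2/n \ge 2\beta d C^2 \log n$, so $e^{-\mu/2} \le n^{-2\beta C^2 d} \le n^{-Cd}$ provided $C \ge C_0(\beta) := 1/(2\beta)$ (so that $2\beta C^2 \ge C$). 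Choosing $C_0$ this way, both terms are controlled, giving $\Pr(R \text{ indep}) \le n^{-Cd} + e^{-\beta m}$ and hence the claimed bound on $I(\G,m)$.

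The only mild subtlety — and the step I would be most careful about — is making sure the absolute constant $C$ from Pittel's/Janson's inequality is genuinely harmless: it multiplies $e^{-\beta m}$, which is fine for large $m$ after shrinking $\beta$ slightly, and it multiplies $n^{-2\beta C^2 d}$, which is fine since we have the slack $2\beta C^2 d \ge C d + 1 \ge \log_n(C) + Cd$ once $C_0$ is large enough. Everything else is the routine two-case Janson estimate, and the key inputs — that edges are $2$-sets, that intersecting edges overlap in exactly one vertex, and the max-degree bound on $\sum_v \deg(v)^2$ — are exactly what the hypotheses $\Delta_2 = O(1)$-flavoured setup and ``maximum degree at most $d$'' are there to provide.
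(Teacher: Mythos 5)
Your proposal is correct and follows essentially the same route as the paper: apply the Hypergeometric Janson Inequality with the edges of $\G$ as the $2$-sets $U_i$, compute $\mu = e(\G)(m/n)^2$, bound $\Delta$ via $\sum_v \deg(v)^2 \le 2d\,e(\G)$, and then run the two-case estimate $e^{-\mu/2} \le n^{-\Theta(Cd)}$ and $e^{-\mu^2/(2\Delta)} \le e^{-\Theta(\beta m)}$, absorbing the Pittel constant by leaving a factor-of-two slack in the exponents. The only cosmetic difference is the precise packaging of the $\Delta$ bound (you use $\sum_v \deg(v)(\deg(v)-1)$ directly, the paper writes $\binom{d}{2}\cdot 2e(\G)/d$), which yields the same $O(d\, e(\G)(m/n)^3)$ estimate.
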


\begin{proof}
Let $\{U_i\}_{i \in I}$ be the collection of pairs of vertices which span an edge of $\G$, so $U_i \nsubseteq R$ for all $i \in I$ if and only if $R$ is an independent set in $\G$. It is easy to see that, letting $\mu$ and $\Delta$ to be the quantities defined in the statement of Lemma~\ref{HJI},
$$\mu \,=\, e(\G) \frac{m^2}{n^2}  \quad \text{and} \quad \Delta \,\le\, {d \choose 2} \left( \frac{2 e(\G)}{d} \right) \bigg( \frac{m}{n} \bigg)^3 \le \, e(\G) \frac{d m^3}{n^3}.$$
Thus, by our bounds on $e(\G)$ and $m$, and assuming $C \ge 4/\beta$, 
$$\mu \, \ge \, 4 C d \log n  \quad \text{and} \quad \frac{\mu^2}{\Delta} \,\ge\, e(\G) \frac{m}{dn} \,\ge\, 4\beta m.$$

By the Hypergeometric Janson Inequality, 
\begin{eqnarray*}
I(\G,m) / {n \choose m} & \le & C \cdot \max \Big\{e^{-\mu/2}, e^{-\mu^2/(2\Delta)} \Big\} \\
& \le & C \cdot \max\Big\{n^{-2Cd}, e^{-2\beta m}\Big\} \,\le\, \max\Big\{ n^{-Cd}, e^{-\beta m} \Big\},
\end{eqnarray*}
as required.
\end{proof}

Recall that, given $\HH_n$, the Cayley graph $\G_S$ of $S$ is defined to be the graph with vertex set $V(\HH_n)$ and edge set 
$$E(\G_S) \,=\, \Big\{ \{u,v\} \subseteq V(\HH_n) \,\colon\, \{u,v,w\} \in \HH_n \textup{ for some } w \in S \Big\}.$$ 
In order to apply Lemma~\ref{lem:Zhao}, we shall need the following easy property of the Cayley graph.

\begin{obs}\label{lem:dreg}
$\Delta(\G_S) \le |S| \Delta_2(\HH_n)$.
\end{obs}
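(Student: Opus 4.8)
The plan is to bound $\deg_{\G_S}(u)$ for an arbitrary fixed vertex $u \in V(\G_S) = V(\HH_n)$ directly from the two definitions involved, and then take the maximum over $u$. The whole argument is a one-line union bound, so there is no real obstacle; the only point requiring (minimal) care is the bookkeeping of witnesses.

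First I would unwind the definition of the edge set: if $v$ is a neighbour of $u$ in $\G_S$, then by definition there exists some $w \in S$ (a \emph{witness}) with $\{u,v,w\} \in \HH_n$. Since $\HH_n$ is $3$-uniform, the set $\{u,v,w\}$ has three distinct elements, so in particular $T := \{u,w\}$ is a genuine $2$-element subset of $V(\HH_n)$ and $v \notin T$. Next I would fix the witness $w \in S$ and count how many neighbours $v$ of $u$ can be obtained using this particular $w$: every such $v$ satisfies $T = \{u,w\} \subseteq \{u,v,w\} \in \HH_n$, and by the definition of $\Delta_2(\HH_n)$ the number of edges of $\HH_n$ containing the fixed pair $T$ is at most $\Delta_2(\HH_n)$, so there are at most $\Delta_2(\HH_n)$ choices of $v$ with witness $w$.

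Finally, summing over the (at most $|S|$) possible witnesses $w \in S$, every neighbour of $u$ is counted at least once, so $\deg_{\G_S}(u) \le \sum_{w \in S} \Delta_2(\HH_n) = |S|\,\Delta_2(\HH_n)$. Since $u$ was arbitrary, $\Delta(\G_S) \le |S|\,\Delta_2(\HH_n)$, as claimed. In the subsequent application one combines this with the hypothesis $\Delta_2(\HH_n) = O(1)$ to conclude that $\G_S$ has maximum degree $O(|S|)$, which is exactly the input needed to feed Lemma~\ref{lem:Zhao} (with $d = |S|\,\Delta_2(\HH_n)$) in the proof of Lemma~\ref{lem:SF1}.
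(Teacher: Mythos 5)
Your proof is correct and is exactly the straightforward double-counting argument one would expect; the paper itself leaves this Observation without a written proof, treating it as immediate from the definitions, and your write-up is precisely what is implicit there. The only subtlety you (rightly) flag --- that $3$-uniformity forces $w \neq u$, so $\{u,w\}$ is a genuine pair --- is handled cleanly, and the overcounting in the union bound over witnesses is harmless for an upper bound.
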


We can now easily deduce Lemma~\ref{lem:SF1} from Lemma~\ref{lem:Zhao} and Observation~\ref{lem:dreg}. 

\begin{proof}[Proof of Lemma~\ref{lem:SF1}]
If $I$ is an independent set in $\HH_n$ containing $S$, then $I \setminus S$ is an independent set in $\G_S$, so
$$I\big( B,S \big) \; \le \; I\big( \G_S[B], m - k \big),$$
where $k = |S|$. Choose $\beta > 0$ sufficiently small so that $\Delta_2(\HH_n) \le 1/(2\beta)$, recall that $\delta(\HH_n,B) \ge \beta n$, note that $d = \Delta(\G_S) \le |S|\Delta_2(\HH_n) \le |S|/(2\beta)$, and observe that therefore
$$e\big(\G_S[B] \big) \, \ge \, \frac{\beta |S| n}{\Delta_2(\HH_n)} \, \ge \, 2\beta^2 |S| \cdot |B| \ge 4\beta^3d|B|.$$
Thus, by Lemma~\ref{lem:Zhao}, if $\beta < 1/10$ then $d \ge 5|S| = 5k$ and
$$I\big( \G_S[B], m - k \big) \, \le \, \Big( |B|^{-5Ck} + e^{-\beta^3 m} \Big) {|B| \choose {m-k}},$$
for every $m \ge C \sqrt{n \log n}$, as required. 
\end{proof}

Finally, let us deduce Proposition~\ref{prop:Janson} from Lemma~\ref{lem:SF1}.

\begin{proof}[Proof of Proposition~\ref{prop:Janson}]
Summing over all sets $B \in \B_n$ and subsets $S \subseteq [n] \setminus B$, and applying Lemma~\ref{lem:SF1}, we have
\begin{eqnarray*}
\Big| \SF^{(\delta)}_{\le}(\HH_n,\B_n,m) \setminus \bigcup_{B \in \B_n} 2^B \Big| & \le & \sum_{B \in \B_n} \sum_{k = 1}^{\delta m} \sum_{S \subseteq [n] \setminus B \,:\, |S| = k} I(B,S)\\
& \le & |\B_n| \sum_{k = 1}^{\delta m} {n \choose k} \Big( n^{-4(C+k)} + e^{-\beta^3 m} \Big) {\|\B_n\| \choose {m-k}},
\end{eqnarray*}
for every $m \ge C\sqrt{n \log n}$, since $\delta(\HH_n,\B_n) \ge \beta n$ and $\Delta_2(\HH_n) = O(1)$ together imply that $|B| = \Theta(n)$ for every $B \in \B_n$. We consider three cases.

\medskip
\noindent \textbf{Case 1:} If $n^{-4(C+k)} \ge e^{-\beta^3 m}$, then  
\begin{eqnarray*}
{n \choose k} \Big( n^{-4(C+k)} + e^{-\beta^3 m} \Big) {\|\B_n\| \choose {m-k}} \, \le \, n^{-2C} {\|\B_n\| \choose {m}},
\end{eqnarray*}
since ${\|\B_n\| \choose {m-k}} \, \le \, {n \choose k} {\|\B_n\| \choose {m}}$.

\medskip
\noindent \textbf{Case 2:} If $n^{-4(C+k)} \le e^{-\beta^3 m}$ and $m \le \alpha n / 2$, then by~\eqref{MAeq2} we have
$${\|\B_n\| \choose {m-k}} \, \le \, \left( \frac{m}{ \alpha n - m} \right)^k {\|\B_n\| \choose {m}} \, \le \, \left( \frac{2m}{ \alpha n} \right)^k {\|\B_n\| \choose {m}},$$
since $\| \B_n \| \ge \alpha n$. Thus, using the bound ${n \choose k} \le \big( \frac{en}{k} \big)^k$, we have
\begin{eqnarray*}
{n \choose k} \Big( n^{-4(C+k)} + e^{-\beta^3 m} \Big) {\|\B_n\| \choose {m-k}} \, \le \, 2 \cdot e^{-\beta^3 m} \left( \frac{2em}{ \alpha k} \right)^k {\|\B_n\| \choose {m}} \, \le \, e^{-\beta^3 m / 2} {\|\B_n\| \choose {m}},
\end{eqnarray*}
if $\delta = \delta(\alpha,\beta) > 0$ is sufficiently small, since $k \le \delta m$.

\medskip
\noindent \textbf{Case 3:} If $\| \B_n \|^{-4(C+k)} \le e^{-\beta^3 m}$ and $m \ge \alpha n / 2$, then we again use the (trivial) bound ${\|\B_n\| \choose {m-k}} \, \le \, {n \choose k} {\|\B_n\| \choose {m}}$, to obtain
$${n \choose k} \Big( n^{-4(C+k)} + e^{-\beta^3 m} \Big) {\|\B_n\| \choose {m-k}} \, \le \, 2e^{-\beta^3 m} {n \choose k}^2 {\|\B_n\| \choose {m}} \, \le \, e^{-\beta^3 m / 2} {\|\B_n\| \choose {m}},$$
if $\delta = \delta(\alpha,\beta) > 0$ is sufficiently small, since ${n \choose k} \le \binom{2m/\alpha}{k} \le \left(\frac{2e}{\alpha\delta}\right)^{\delta m} \le e^{-\beta^3 m/6}$ for $k \le \delta m$.

\medskip
Since $e^{-\beta^3 m/2} \ll n^{-2C}$ for $m \ge C\sqrt{n \log n}$, the claimed bound follows. 
\end{proof}

We finish this section by observing that Theorem~\ref{algprop} and Proposition~\ref{prop:Janson} together imply Theorem~\ref{genthm}.

\begin{proof}[Proof of Theorem~\ref{genthm}]
Let $\HH = (\HH_n)_{n \in \N}$ be a sequence of $3$-uniform hypergraphs which is $(\alpha,\B)$-stable, where $\B = (\B_n)_{n \in \N}$ is a family of sets, and $\alpha > 0$. Suppose that $\alpha(\HH_n) \ge \|\B_n\| \ge \alpha n$, and that there exists $\beta > 0$ such that $e(\HH_n) \ge \beta n^2$, $\Delta_2(\HH_n) \le 1/\beta$, $|\B_n| \le n^{1/\beta}$ and $\delta(\HH_n,\B_n) \ge \beta n$ for every $n \in \N$. Let $\delta = \delta(\beta) > 0$ be sufficiently small, and let $C = C(\beta, \delta) > 0$ be sufficiently large. We claim that if
$$m \,\ge\, C\sqrt{n \log n},$$
then almost every independent set in $\HH_n$ of size $m$ is a subset of some $B \in \B_n$.

Indeed, by Theorem~\ref{algprop}, the number of independent sets $I$ in $\HH_n$ of size $m$ for which $|I \setminus B| \ge \delta m$ for every $B \in \B_n$ is at most 
$$\Big( 2^{-\eps m} + \delta^m |\B_n| \Big) {\|\B_n\| \choose m}$$
for some $\eps > 0$ and by Proposition~\ref{prop:Janson}, the number of such sets for which $1 \le |I \setminus B| \le \delta m$ for some $B \in \B_n$ is at most 
$$n^{-C} |\B_n| {\|\B_n\| \choose m}.$$
Since $|\B_n| \le n^{1/\beta}$, $C > 1/\beta$, and $\alpha(\HH_n) \ge \|\B_n\|$, the result follows.
\end{proof}

\section{Abelian groups of Type I}\label{GroupSec}

In this section, we shall use Theorem~\ref{genthm} to prove Theorem~\ref{thm:groups} for all $q > 2$. We remark that the  proof below can also be adapted to cover the case $q = 2$; however, since we shall give a different proof of the case $q = 2$ in Section~\ref{q=2Sec}, we leave the details to the reader. (If $\HH_n$ denotes the hypergraph that encodes Schur triples in a group $G$ of even order $n$ and $\B_n$ denotes the collection of maximum-size sum-free subsets of $G$, then it is not always true that $\Omega(\HH_n, \B_n) = \Omega(n)$. This problem can be easily overcome by considering triples of the form $(x,x,2x)$, cf.~the proof of the $1$-statement in \cite[Theorem~1.2]{BMS}.)

In order to prove that our hypergraph is $(\alpha,\B)$-stable, we shall use the following result (see~\cite[Corollary~2.8]{BMS}), which follows immediately by combining results of Green and Ruzsa~\cite{GR05} and Lev, {\L}uczak, and Schoen~\cite{LLS}. Let $\SF_0(G)$ denote the collection of maximal-size sum-free subsets of $G$ and recall that each $B \in \SF_0(G)$ has size $\mu(G) |G|$. 

\begin{prop}\label{GRLLS}
Let $G$ be a finite Abelian group of Type $I(q)$, where $q \equiv 2 \pmod 3$ and let $0 < \gamma < \gamma(q)$ and $0 < \beta < \beta_0(\gamma,q)$ be sufficiently small. Let $A \subseteq G$, and suppose that 
$$|A| \, \ge \, \big( \mu(G) - \beta \big) |G|.$$ Then one of the following holds: 
\begin{itemize}
\item[$(a)$] $|A \setminus B| \le \gamma |G|$ for some $B \in \SF_0(G)$.\smallskip
\item[$(b)$] $A$ contains at least $\beta |G|^2$ Schur triples.
\end{itemize}
\end{prop}

We shall also use the following classification of extremal sum-free sets for Type I groups.

\begin{thm}[Diananda and Yap~\cite{DY}]\label{thm:SFG-structure}
Let $G$ be a finite Abelian group of Type $I(q)$, where $q \equiv 2 \pmod 3$. Then every $B \in \SF_0(G)$ is a union of cosets of some subgroup $H$ of $G$ of index $q$, $B/H$ is an arithmetic progression in $G/H$, and $B \cup (B+B) = G$. 

In other words, for every $B \in \SF_0(G)$, there exists a homomorphism $\varphi \colon G \to \ZZ_q$ such that $B = \varphi^{-1}(\{k+1, \ldots, 2k+1\})$, where $q = 3k + 2$. 
\end{thm}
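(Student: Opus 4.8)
The statement has two parts: producing at least one maximum-size sum-free set of the stated form (which also yields $\mu(G) \ge (q+1)/(3q)$), and showing that \emph{every} member of $\SF_0(G)$ has this form; the second is the substance. Write $q = 3k+2$ and $n = |G|$. For the construction: since $G$ is abelian and the prime $q$ divides $n$, the group $G$ admits a surjective homomorphism $\varphi_0 \colon G \to \ZZ_q$; put $B_0 = \varphi_0^{-1}\big(\{k+1,\dots,2k+1\}\big)$. A direct check shows $\{k+1,\dots,2k+1\}$ is sum-free in $\ZZ_q$ --- its sumset $\{2k+2,\dots,4k+2\} = \{0,\dots,k\}\cup\{2k+2,\dots,3k+1\}$ is disjoint from it --- and that its union with its sumset is all of $\ZZ_q$; pulling back through $\varphi_0$ shows $B_0$ is sum-free, has size $\frac{k+1}{q}n = \mu(G)n$, is a union of cosets of the index-$q$ subgroup $\ker\varphi_0$ with $B_0/\ker\varphi_0$ an arithmetic progression, and satisfies $B_0 \cup (B_0+B_0) = G$. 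Hence $B_0 \in \SF_0(G)$.

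For the converse, let $A \in \SF_0(G)$. Since $A \ne \emptyset$ and $A \cap (A+A) = \emptyset$ we have $\emptyset \ne A+A \subsetneq G$, so the stabiliser $H := \{g \in G : g+(A+A) = A+A\}$ is a proper subgroup; put $q' := [G:H] \ge 2$ and let $\pi \colon G \to G/H$ be the quotient map. As $A+A$ is a union of $H$-cosets, $A+A = \pi^{-1}\big(\pi(A)+\pi(A)\big)$, and disjointness of $A$ from this set forces $\pi(A)$ to be sum-free in $G/H$; moreover $\pi(A)+\pi(A) = \pi(A+A)$ has trivial stabiliser in $G/H$ (a short check, using that $H$ is the full stabiliser of $A+A$). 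Kneser's theorem gives $|\pi(A)+\pi(A)| \ge 2|\pi(A)|-1$, and combined with $|\pi(A)|+|\pi(A)+\pi(A)| \le q'$ this yields $|\pi(A)| \le \lfloor(q'+1)/3\rfloor$, whence
\[
|A| \;\le\; |H|\cdot|\pi(A)| \;\le\; |H|\left\lfloor\frac{q'+1}{3}\right\rfloor \;=\; n\cdot\frac{\lfloor(q'+1)/3\rfloor}{q'}.
\]
Now $\lfloor(d+1)/3\rfloor/d$ equals $\tfrac13$ for divisors $d \ge 2$ of $n$ with $d \not\equiv 2 \pmod 3$ and equals $\tfrac13+\tfrac{1}{3d}$ when $d \equiv 2 \pmod 3$; since any divisor of $n$ congruent to $2 \pmod 3$ has a prime factor $\equiv 2\pmod 3$ dividing $n$, the smallest such divisor is the prime $q$ itself, so this quantity is maximised uniquely at $d = q$ with value $(q+1)/(3q) = \mu(G)$. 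Hence $|A| \le \mu(G)n$, and as $A$ attains this, all the displayed inequalities are equalities: $q' = q$, so $H$ has index $q$; $A = \pi^{-1}(\pi(A))$ is a union of $H$-cosets; and $\pi(A)$ is a sum-free subset of $G/H \cong \ZZ_q$ of the maximum size $k+1 = (q+1)/3$.

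It remains to classify sum-free $S \subseteq \ZZ_q$ with $|S| = k+1$. By Cauchy--Davenport $|S+S| \ge \min(q,2k+1) = 2k+1$, while sum-freeness gives $|S+S| \le q-(k+1) = 2k+1$, so $|S+S| = 2|S|-1$ and $S \sqcup (S+S) = \ZZ_q$. If $q \le 5$ then $|S| \le 2$, so $S$ is trivially an arithmetic progression; if $q \ge 11$ then $|S| \ge 3$ and $|S+S| \le q-2$, so Vosper's theorem forces $S$ to be an arithmetic progression. In either case $S$ has some common difference $d \ne 0$, so composing $\pi$ with multiplication by $d^{-1}$ gives a surjection $\varphi \colon G \to \ZZ_q$ for which $\varphi(A) = d^{-1}\pi(A)$ is an interval of $k+1$ consecutive residues; being disjoint from its sumset (an interval of $2k+1$ consecutive residues) in $\ZZ_{3k+2}$, this interval is forced to be $\{k+1,\dots,2k+1\}$. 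Thus $A = \varphi^{-1}\big(\{k+1,\dots,2k+1\}\big)$, $A/H = \pi(A)$ is an arithmetic progression in $G/H$, and $A \cup (A+A) = \varphi^{-1}\big(\{k+1,\dots,2k+1\}\cup\{0,\dots,k\}\cup\{2k+2,\dots,3k+1\}\big) = G$, exactly as in the construction. The only non-elementary input is this description of maximum-size sum-free subsets of $\ZZ_q$ (obtained from Cauchy--Davenport and Vosper, or simply quoted); the steps that require care are verifying the triviality of the stabiliser of $\pi(A)+\pi(A)$ in $G/H$ --- which is what makes Kneser's bound clean --- and checking that equality in the divisor optimisation pins down $q' = q$ exactly, so that $H$ really has index $q$.
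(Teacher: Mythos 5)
The paper cites this theorem to Diananda and Yap~\cite{DY} and does not give a proof, so there is no in-text argument to compare yours against; I can only assess it on its own terms. Your argument is correct, and it follows what is essentially the standard route: take $H$ to be the stabiliser of $A+A$, which is proper since $\emptyset \neq A+A \subsetneq G$; check that $\pi(A)$ is sum-free in $G/H$ and that $\pi(A)+\pi(A) = \pi(A+A)$ has trivial stabiliser there (because $H$ is already the full stabiliser of $A+A$ in $G$); apply Kneser to get $|\pi(A)| \le \lfloor(q'+1)/3\rfloor$ with $q' = [G:H]$, and optimise over divisors of $n$ to pin down $q' = q$, with $A$ a union of $H$-cosets; finally, since $G/H \cong \ZZ_q$, use Cauchy--Davenport and Vosper to show $\pi(A)$ is an arithmetic progression, and normalise by $d^{-1}$ to land on the interval $\{k+1,\dots,2k+1\}$. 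I checked the two delicate points you flag at the end (triviality of the stabiliser of $\pi(A+A)$ in $G/H$, and that equality in the divisor optimisation forces $q'=q$) and both are handled correctly.

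One minor slip: the assertion that $\lfloor(d+1)/3\rfloor/d = \tfrac13$ for every divisor $d \ge 2$ with $d \not\equiv 2 \pmod 3$ is false when $d \equiv 1 \pmod 3$, where the ratio is $\tfrac{d-1}{3d} < \tfrac13$. The strict inequality only helps you, so the optimisation step and its conclusion are unaffected. Likewise, the separate treatment of $q \le 5$ is more cautious than needed --- the form of Vosper requiring only $|S| \ge 2$ and $|S+S| \le q-2$ covers all primes $q \ge 5$ at once, leaving only $q=2$ to dispatch by hand --- but that is a matter of presentation, not a gap.
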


Combining Theorem~\ref{thm:SFG-structure} with Kronecker's Decomposition Theorem, we easily obtain the following well-known corollary.


\begin{cor}\label{cor:SFG}
Let $G$ be an arbitrary group of Type I. Then $|\SF_0(G)| \le |G|$.
\end{cor}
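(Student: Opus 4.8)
The plan is to use Theorem~\ref{thm:SFG-structure} to bound $|\SF_0(G)|$ by the number of homomorphisms from $G$ to $\ZZ_q$, and then to bound that number via Kronecker's Decomposition Theorem. Let $q$ be the smallest prime with $q \equiv 2 \pmod 3$ dividing $|G|$ (which exists since $G$ is of Type I), so that $G$ is of Type I($q$); write $q = 3k+2$. By Theorem~\ref{thm:SFG-structure}, for every $B \in \SF_0(G)$ there is a homomorphism $\varphi \colon G \to \ZZ_q$ with $B = \varphi^{-1}(\{k+1, \ldots, 2k+1\})$. Thus the assignment $\varphi \mapsto \varphi^{-1}(\{k+1,\ldots,2k+1\})$ maps $\Hom(G,\ZZ_q)$ onto $\SF_0(G)$ (possibly non-injectively, which is harmless), and hence $|\SF_0(G)| \le |\Hom(G,\ZZ_q)|$.

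Next I would compute $|\Hom(G,\ZZ_q)|$. By Kronecker's Decomposition Theorem, $G \cong \ZZ_{d_1} \oplus \cdots \oplus \ZZ_{d_r}$ for some positive integers $d_1, \ldots, d_r$, so that $|G| = d_1 \cdots d_r$ and $\Hom(G,\ZZ_q) \cong \bigoplus_{i=1}^r \Hom(\ZZ_{d_i}, \ZZ_q)$. A homomorphism $\ZZ_{d_i} \to \ZZ_q$ is determined by the image of a generator, which may be an arbitrary element of $\ZZ_q$ annihilated by $d_i$; hence $|\Hom(\ZZ_{d_i},\ZZ_q)| = \gcd(d_i,q)$, and since $q$ is prime this equals $q$ if $q \mid d_i$ and $1$ otherwise. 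Therefore $|\Hom(G,\ZZ_q)| = q^s$, where $s := \big|\{ i : q \mid d_i \}\big|$.

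Finally I would compare with $|G|$: each of the $s$ invariant factors $d_i$ that are divisible by $q$ is at least $q$, and the remaining factors are at least $1$, so $|G| = \prod_{i=1}^r d_i \ge q^s$. Combining the three inequalities yields $|\SF_0(G)| \le |\Hom(G,\ZZ_q)| = q^s \le |G|$, as required. There is no genuine obstacle here; the only point needing a little care is that $\varphi \mapsto \varphi^{-1}(\{k+1,\ldots,2k+1\})$ need not be injective, but for the claimed bound it suffices that its image has size at most $|\Hom(G,\ZZ_q)|$.
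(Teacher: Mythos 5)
Your proof is correct and follows exactly the route the paper indicates (the paper gives no explicit proof, merely noting that the corollary follows by ``combining Theorem~\ref{thm:SFG-structure} with Kronecker's Decomposition Theorem''): you use Theorem~\ref{thm:SFG-structure} to inject $\SF_0(G)$ into $\Hom(G,\ZZ_q)$ and then compute $|\Hom(G,\ZZ_q)| = q^s \le |G|$ via the invariant-factor decomposition. This is precisely the intended argument, carried out cleanly.
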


It is now straightforward to deduce Theorem~\ref{thm:groups} from Theorem~\ref{genthm}, Proposition~\ref{GRLLS}, and Corollary~\ref{cor:SFG}.

\begin{proof}[Proof of Theorem~\ref{thm:groups} for $q \neq 2$]
Let $q \equiv 2 \pmod 3$ be an odd prime, let $C = C(q)$ be sufficiently large, and let $G_n$ be an Abelian group of Type I($q$), with $|G_n| = n$. We shall show that if $m \ge C(q)\sqrt{n \log n}$, then almost every sum-free set of size $m$ in $G_n$ is contained in a member of $\SF_0(G)$.

We begin by choosing an infinite set $X \subseteq \N$ such that, for every $n \in X$, $q$ is the smallest prime divisor of $n$ with $q \equiv 2 \pmod 3$. For each $n \in X$, let $G_n$ be an Abelian group of Type I($q$), with $|G_n| = n$, and define $\HH = (\HH_n)_{n \in X}$ to be the sequence of hypergraphs on vertex set $V(\HH_n) = G_n$ which encodes Schur triples. To be precise, let $V(\HH_n) = G_n$, let $\{x,y,z\} \in {G_n \choose 3}$ be an edge of $\HH_n$ whenever $x + y = z$, and observe that every sum-free subset of $G_n$ is an independent set in $\HH_n$.\footnote{But not vice-versa, since $\HH_n$ does not contain the Schur triples in $G_n$ of the form $(x,x,2x)$. Thus, by bounding $I(\HH_n,m)$ we are in fact proving a statement which is slightly stronger than Theorem~\ref{thm:groups}.} Let $\B_n = \SF_0(G_n)$, the collection of maximum size sum-free subsets of $G_n$, and recall that $|\B_n| \le n$, by Corollary~\ref{cor:SFG}. 

We claim that $\HH$ and $\B$ satisfy the conditions of Theorem~\ref{genthm}. Indeed, $\HH_n$ is 3-uniform, has $\Theta(n^2)$ edges, and satisfies $\Delta_2(\HH_n) = 3$. Setting $\alpha = \mu(G)$, we have $\alpha(\HH_n) = \|\B_n\| = \alpha n$ and $|\B_n| \le n$, as observed above. Moreover, the statement that $\HH$ is $(\alpha,\B)$-stable is exactly Proposition~\ref{GRLLS}. Thus it will suffice to show that $\delta(\HH_n,\B_n) = \Omega(n)$.

\begin{claim*}
For each $B \in \SF_0(G)$ and every $x \in G \setminus B$, 
$$\bigg| \bigg\{ \big\{ y, z \big\} \in {B \choose 2} \colon x = y + z \bigg\} \bigg| \, \ge \, \frac{n}{2q} - \frac{1}{2}.$$
\end{claim*}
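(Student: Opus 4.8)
The plan is to exploit the explicit structure of maximum-size sum-free sets given by Theorem~\ref{thm:SFG-structure}. By that theorem there is a surjective homomorphism $\varphi \colon G \to \ZZ_q$ with $B = \varphi^{-1}(\{k+1, \dots, 2k+1\})$, where $q = 3k+2$; setting $H = \ker \varphi$, we have $|H| = n/q$. Fix $x \in G \setminus B$ and let $t = \varphi(x)$, so that $t$ lies in $\ZZ_q \setminus \{k+1, \dots, 2k+1\}$.

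The first step is to check that every such residue $t$ is a sum of two residues from the interval $\{k+1,\dots,2k+1\}$. As $a$ and $b$ run over $\{k+1,\dots,2k+1\}$, the integer $a+b$ takes every value in $\{2k+2,\dots,4k+2\}$; reducing this block of $2k+1$ consecutive integers modulo $q = 3k+2$ yields precisely $\{2k+2,\dots,3k+1\} \cup \{0,1,\dots,k\}$, which is exactly $\ZZ_q \setminus \{k+1,\dots,2k+1\}$ and therefore contains $t$. Fix a pair $(a,b)$ with $a,b \in \{k+1,\dots,2k+1\}$ and $a+b \equiv t \pmod q$.

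The second step is a counting argument inside the cosets of $H$. For each of the $|H| = n/q$ elements $y \in \varphi^{-1}(a)$, put $z = x-y$; then $\varphi(z) = t - a \equiv b \pmod q$, so $z \in \varphi^{-1}(b) \subseteq B$, while $y \in \varphi^{-1}(a) \subseteq B$ and $y+z = x$. Distinct $y$ give distinct ordered pairs, so there are at least $n/q$ ordered pairs $(y,z) \in B \times B$ with $y+z = x$. To pass to unordered pairs of distinct elements I would use that $n$ is odd: since $q \neq 2$ is the smallest prime divisor of $n$ with $q \equiv 2 \pmod 3$ and $2 \equiv 2 \pmod 3$, the order $n$ cannot be even, so $y \mapsto 2y$ is a bijection of $G$ and at most one of the ordered pairs above is diagonal (such a pair satisfies $2y = x$). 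Discarding that pair if present and dividing by two gives at least $(n/q - 1)/2 = n/(2q) - 1/2$ unordered pairs $\{y,z\} \in \binom{B}{2}$ with $y+z = x$, which is the claim. Since $x \notin B$ while $y,z \in B$ are distinct, each such pair yields a distinct edge $\{x,y,z\}$ of $\HH_n$ meeting $B$ in two points and containing $x$, so this shows $\delta(\HH_n,\B_n) \ge n/(2q) - 1/2 = \Omega(n)$, completing the verification of the hypotheses of Theorem~\ref{genthm}.

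I do not expect a genuine difficulty here. The only point needing care is the modular interval identity in the first step (making sure the wrap-around modulo $q$ covers every residue outside $\varphi(B)$), together with the minor bookkeeping in the last step to discard the single diagonal pair — which is exactly where the oddness of $n$, forced by $q \neq 2$ and the Type~I($q$) hypothesis, is used.
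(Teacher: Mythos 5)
Your proof is correct and follows essentially the same route as the paper's: find one way to write $x$ as a sum of two elements of $B$, translate by the $|H| = n/q$ elements of $H$ to get $n/q$ ordered solutions, and use the oddness of $n$ to discard the single possible diagonal pair when passing to unordered pairs. The only cosmetic difference is that you re-derive $x \in B+B$ via an explicit modular-interval computation from the description $B = \varphi^{-1}(\{k+1,\dots,2k+1\})$, whereas the paper simply cites $B \cup (B+B) = G$ directly from Theorem~\ref{thm:SFG-structure}.
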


\begin{proof}[Proof of claim]
Let $B \in \SF_0(G)$ and let $x \in G \setminus B$. By Theorem~\ref{thm:SFG-structure}, there exists a subgroup $H$ of $G$ of index $q$ such that $B$ is a union of cosets of $H$ and $B \cup (B+B) = G$. It follows that $x = y + z$ for some $y, z \in B$, and that $y+h, z-h \in B$ for every $h \in H$. Thus, 
$$\big\{ y + h,z - h \big\} \in C(x) := \bigg\{ \big\{ y, z \big\} \in {B \choose 2} \colon x = y + z \bigg\}$$ 
whenever $h \in H$ and $y + h \neq z - h$. Moreover, since $|G|$ is odd, there is at most one $h \in H$ such that $2h = z - y$, so $|C(x)| \ge (|H| - 1)/2 = n/2q - 1/2$, as required.
\end{proof}

Thus the pair $(\HH,\B)$ satisfies the conditions of Theorem~\ref{genthm} and hence if $C(q)$ is sufficiently large and $m \ge C(q) \sqrt{n \log n}$, then almost every sum-free set of size $m$ in $G_n$ is contained in some $B \in \B_n$, as required.

Finally, let us deduce that if $G$ is an Abelian group of Type I($q$), and $m \ge C(q) \sqrt{n \log n}$, then  
\[
|\SF(G,m)| \,=\, \frac{1}{2} \cdot \left(\#\big\{\text{elements of $G$ of order $q$}\big\} + o(1)\right) {\mu(G)n \choose m}.
\]
Indeed, it suffices to observe that \(|\SF_0(G)| = \#\big\{\text{elements of $G$ of order $q$}\big\} / 2\), by Theorem~\ref{thm:SFG-structure}, and that each pair $B,B' \in \SF_0(G)$ intersect in at most $(1 - 1/q)\mu(G) |G|$ elements. The result now follows from some easy counting. 
\end{proof}

\section{Abelian groups of even order}\label{q=2Sec}

In this section, we shall prove the following theorem, which implies Theorem~\ref{thm:groups} in the case $q = 2$. We shall use Theorem~\ref{thm:graphs} and some ideas from Section~\ref{JansonSec}, but otherwise this section is self-contained. In particular, we shall not use Proposition~\ref{GRLLS} and thus we give a new proof of the main theorem of~\cite{LLS} and~\cite{Sap02}.

\begin{thm}\label{q=2}
  If $G$ is an Abelian group of order $n$, then 
  $$|\SF(G,m)| \; = \; \left(\# \big\{ \text{elements of $G$ of order $2$} \big\} + o(1) \right) {n/2 \choose m}$$ 
  for every $m \ge 4\sqrt{n \log n}$. 
\end{thm}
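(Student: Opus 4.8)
The plan is to reduce the enumeration of sum-free sets in $G$ to counting independent sets in an appropriate Cayley graph and then feed that graph into Theorem~\ref{thm:graphs}. If $n$ is odd then $G$ has no element of order $2$ and no sum-free set of order $\ge 4\sqrt{n\log n}$ (indeed $\mu(G) = o(1)$ for such groups when they are of Type I, and the trivial bound handles the rest), so the statement is vacuous; hence assume $n$ is even. The lower bound on $|\SF(G,m)|$ is easy: each element $g \in G$ of order $2$ determines a subgroup $H_g$ of index $2$ whose nontrivial coset $O_g := G \setminus H_g$ is sum-free of size $n/2$, giving $\binom{n/2}{m}$ sum-free $m$-sets each; distinct $g$ give coset-complements that pairwise intersect in at most $n/4 + o(n)$ elements, so by inclusion–exclusion these families overlap in only an $o(1)$-fraction, yielding the lower bound $\big(\#\{\text{order-}2\text{ elements}\} - o(1)\big)\binom{n/2}{m}$. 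So the work is the matching upper bound.

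For the upper bound, first I would dispose of sum-free sets $I$ that are \emph{not} almost contained in a maximum-size sum-free set. Set $\HH_n$ to be the $3$-uniform hypergraph of Schur triples on $G$, together with the degenerate triples $(x,x,2x)$ handled as in the footnote/remark in Section~\ref{GroupSec} (this is exactly the point the authors flag: one needs $\Delta_2 = O(1)$ and $\delta(\HH_n,\B_n) = \Omega(n)$, and the diagonal triples $(x,x,2x)$ restore $\delta(\HH_n,\B_n) = \Omega(n)$ when $q=2$). With $\B_n = \SF_0(G)$ the family of index-$2$ coset complements, one runs the Main Algorithm argument of Section~\ref{AlgSec} (Theorem~\ref{algprop}) to conclude that all but a $2^{-\eps m} + \delta^m|\B_n| = o(1)$-fraction of sum-free $m$-sets satisfy $|I \setminus B| < \delta m$ for some $B \in \B_n$. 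But rather than invoke Proposition~\ref{GRLLS} for stability, which is precisely what we want to avoid in this self-contained section, I would instead use the spectral input directly: for any $A$ with $|A| \ge (1/2 + \delta)n$, the Cayley graph of Schur triples on $A$ is dense by a character/eigenvalue computation, since the relevant graph (Cayley graph of $G$ with connection set all of $G$, suitably interpreted) has smallest eigenvalue bounded below, and the Alon–Chung lemma (Lemma~\ref{lemma:AC}) forces $2e(A) \ge \eps|A|d$. This gives the required local-density hypothesis, and Theorem~\ref{thm:graphs-loc-dense} (or Theorem~\ref{thm:graphs} directly) already bounds the number of independent $m$-sets living in such an $A$ by $\binom{(1/2+2\delta)n}{m}$, which is negligible against $\binom{n/2}{m}$ provided $\delta$ is small and $m \ge 4\sqrt{n\log n}$.

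It then remains to handle sets $I$ with $1 \le |I\setminus B| \le \delta m$ for some $B = O_g \in \B_n$: these must be counted and shown to contribute only $o(1)\binom{n/2}{m}$ beyond the sets $I \subseteq B$ themselves. This is a Janson-type computation identical in spirit to Proposition~\ref{prop:Janson}: writing $S = I \setminus B$ with $|S| = k \ge 1$, the set $I \setminus S \subseteq B$ must avoid every edge $\{u,v\} \subseteq B$ with $\{u,v,w\}$ a Schur triple for some $w \in S$, and the number of such edges is $\Omega(k \cdot n)$ by the claim that each $x \in G \setminus B$ lies in $\Omega(n)$ Schur triples within $B$ (here, for even order, one uses that $B = O_g$ is a coset of an index-$2$ subgroup $H$, so $x = y+z$ with $y,z\in B$ has $\asymp |H|$ solutions), whence the Hypergeometric Janson Inequality (Lemma~\ref{HJI}) gives $I(B,S) \le (|B|^{-\Omega(Ck)} + e^{-\Omega(m)})\binom{|B|}{m-k}$, and summing over $B$, over $k$, and over $S$ as in the proof of Proposition~\ref{prop:Janson} yields the negligible bound. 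Combining the three pieces — the easy lower bound, the algorithmic/spectral bound on sets far from every $B$, and the Janson bound on sets near but not inside some $B$ — gives $|\SF(G,m)| = \big(\#\{\text{order-}2\text{ elements}\} + o(1)\big)\binom{n/2}{m}$.

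The main obstacle I anticipate is making the spectral substitute for Proposition~\ref{GRLLS} fully rigorous: one must identify the correct regular graph on $G$ whose independent sets capture large "almost sum-free" sets, check that its smallest eigenvalue is bounded away from $-d$ by an explicit character computation (this is where "Abelian group of even order" is used — the coset structure of index-$2$ subgroups pins down the eigenvalues), and verify that the degenerate triples $(x,x,2x)$ are correctly incorporated so that $\delta(\HH_n,\B_n) = \Omega(n)$ holds uniformly. The rest is bookkeeping along the lines already laid out in Sections~\ref{GraphSec}–\ref{JansonSec}.
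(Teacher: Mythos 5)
Your Janson step (sets $I$ with $1 \le |I \setminus B| \le \delta m$ for some $B$) is essentially Proposition~\ref{prop:SF1} and is fine. The difficulty is your treatment of the complementary family, and here your proposal diverges from the paper in a way that leaves a real gap.

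The paper's proof of Theorem~\ref{q=2} does \emph{not} run the Main Algorithm from Section~\ref{AlgSec} and does \emph{not} need any stability statement of the form of Proposition~\ref{GRLLS}. Instead, for $I \in \SF_\ge^{(\delta)}(G,m)$ it splits on the eigenvalue $\lambda(I)$ of the set $I$ itself: if $\lambda(I)$ is close to $-|I|$, then some character $\chi$ satisfies $\lambda(I,\chi) \le (\delta-1)|I|$, and a case analysis on $|\rng(\chi)|$ shows this is impossible when $|\rng(\chi)| = 2$ (because $I \in \SF_\ge^{(\delta)}$ forces $|I \cap H| \ge \delta m$) and extremely rare when $|\rng(\chi)| \ge 3$ (because $I$ would have to be concentrated in a preimage $\chi^{-1}(C_\zeta)$ of size $\le n/3$). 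If instead $\lambda(I) \ge (\delta-1)|I|$, a Chernoff sampling argument produces a subset $S \subseteq I$ of size $\eps m$ with $\lambda(S)$, hence $\lambda(\G^*_S)$, bounded away from $-|S \cup (-S)|$, and only then is Theorem~\ref{thm:graphs} invoked, applied to the graph $\G^*_S$. None of these ingredients --- the eigenvalue $\lambda(I)$ of a set, the $|\rng(\chi)|$ dichotomy, the sampling step --- appear in your sketch.

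The gap in your route is the ``spectral stability substitute.'' You propose that for $|A| \ge (1/2+\delta)n$, Alon--Chung forces $2e(A) \ge \eps|A|d$ in ``the Cayley graph of Schur triples,'' and that this can replace Proposition~\ref{GRLLS} in running the Main Algorithm. But the Main Algorithm requires the stronger \emph{dichotomy}: whenever $|A| \ge (1/2-\beta)n$ \emph{and} $A$ is $\gamma n$-far from every $B \in \B_n$, then $A$ contains $\Omega(n^2)$ Schur triples. A pure density estimate valid only for $|A| \ge (1/2+\delta)n$ does not cover the crucial window $(1/2-\beta)n \le |A| < (1/2+\delta)n$, and indeed cannot: sets $A$ in that window that are close to some coset complement $O_g$ have \emph{no} Schur triples at all, so the dichotomy genuinely requires structural information (which the paper imports via Green--Ruzsa and Lev--\L uczak--Schoen in the $q \neq 2$ case, and avoids altogether in the $q=2$ proof by the character argument above). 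If you stop the algorithm at $|A| \le (1/2+\delta)n$, the resulting bound $\binom{(1/2+\delta)n}{m-t}$ is too weak --- it loses the main term $\binom{n/2}{m}$ by an exponential factor. There is also no single fixed regular graph on $G$ whose independent sets coincide with sum-free sets, so Lemma~\ref{lemma:AC} (which is a statement about a fixed regular graph) has no direct target here; the relevant graphs $\G^*_S$ are $S$-dependent, which is precisely why the paper must first certify $\lambda(\G^*_S)$ is not too negative before applying Theorem~\ref{thm:graphs}.

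Two minor points: the theorem is \emph{not} vacuous for odd $n$ --- it then reads $|\SF(G,m)| = o\bigl(\binom{n/2}{m}\bigr)$, a nontrivial upper bound, which the paper gets automatically because $\SF_\le^{(\delta)}(G,m) = \emptyset$ and Proposition~\ref{prop:SF2} applies; and the diagonal triples $(x,x,2x)$ are a fix for the $(\alpha,\B)$-stable/$\delta(\HH_n,\B_n)$-based route of Section~\ref{GroupSec}, but they play no role in the self-contained Section~\ref{q=2Sec} proof, which does not pass through Theorem~\ref{genthm} at all.
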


We remark that we shall prove the theorem for \emph{all} finite Abelian groups, not just those of even order. We begin by partitioning the collection of sum-free sets into two pieces. Given an Abelian group $G$, let
$$\SF_\le^{(\delta)}(G,m) \, := \, \Big\{ I \in \SF(G,m) \,\colon\, |I \cap H| \le \delta m \textup{ for some } H \le G \textup{ with } [G:H] = 2 \Big\},$$
and 
$$\SF_\ge^{(\delta)}(G,m) \, := \, \Big\{ I \in \SF(G,m) \,\colon\, |I \cap H| \ge \delta m \textup{ for every } H \le G \textup{ with } [G:H] = 2 \Big\}.$$
Note that if $|G|$ is odd then $\SF_\le^{(\delta)}(G,m)$ is empty. We shall prove the following proposition using the method of Section~\ref{JansonSec}.

\begin{prop}\label{prop:SF1}
Let $G$ be an Abelian group of order $n$, and let $\delta > 0$ be sufficiently small. Then
\[
|\SF_\le^{(\delta)}(G,m)| \, \le \, \left(\#\big\{ \text{elements of $G$ of order $2$} \big\} +o(1) \right) \cdot {{n/2} \choose m}
\]
for every $m \ge 4 \sqrt{n \log n}$. 
\end{prop}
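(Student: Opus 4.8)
The plan is to sort $\SF_\le^{(\delta)}(G,m)$ first by an index-$2$ subgroup $H\le G$ witnessing $|I\cap H|\le\delta m$, and then by the set $S:=I\cap H$. Write $D=D_H:=G\setminus H$, a coset of $H$ of size $n/2$, and put $k=|S|$. Since $D+D\subseteq H$ is disjoint from $D$ and $0\in H$, \emph{every} subset of $D$ is sum-free, so the sets $I$ with $S=\emptyset$ contribute exactly $\binom{n/2}{m}$ of the $I\subseteq D_H$; summing over $H$ this is the main term, and here I use the elementary fact that the number of index-$2$ subgroups of $G$ equals $|G/2G|-1$, which is the number of elements of order $2$. (For the upper bound the overcounting between different $D_H$ is irrelevant.) Since there are at most $n$ index-$2$ subgroups, it therefore suffices to prove, for each fixed $H$,
\[
\sum_{k\ge 1}\ \sum_{S\subseteq H,\ |S|=k} I(H,S)\;=\;o\!\left(n^{-1}\binom{n/2}{m}\right),\qquad\text{where } I(H,S):=\big|\{I\in\SF(G,m):I\cap H=S\}\big|,
\]
and we already know $0\notin S$, because $0$ lies in no sum-free set.

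The heart of the matter is a bound on $I(H,S)$ obtained by the argument of Section~\ref{JansonSec}. Given $S\subseteq H\setminus\{0\}$, let $\Gamma_S$ be the graph on vertex set $D$ in which $\{u,v\}$ is an edge whenever $u+v\in S$, $u-v\in S$, or $v-u\in S$. If $I=S\sqcup J$ with $J\subseteq D$ is sum-free, then $J$ is independent in $\Gamma_S$: an edge $\{u,v\}\subseteq J$ of the first kind gives $u+v\in S\subseteq I$ with $u+v\in I+I$, and one of the other two kinds gives (say) $u=v+s\in I+I$ with $u\in I$, each contradicting sum-freeness. Hence $I(H,S)\le I(\Gamma_S,m-k)$, and the only two facts about $\Gamma_S$ we need are $\Delta(\Gamma_S)\le 3k$ (immediate) and $e(\Gamma_S)\ge kn/8$. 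For the edge bound --- which plays the role of the hypothesis $\delta(\HH_n,\B_n)=\Omega(n)$ of Theorem~\ref{genthm} --- I will use \emph{only} the ``difference'' edges: for each $s\in S$ the pairs $\{u,u+s\}$, $u\in D$, lie in $\binom D2$ and are pairwise distinct unless $2s=0$ (in which case each occurs twice), giving at least $n/4$ of them, and each edge of $\Gamma_S$ has this form for at most two values of $s$, so $e(\Gamma_S)\ge\tfrac12\cdot k\cdot\tfrac n4=kn/8$. Using difference edges rather than the ``sum'' edges $u+v=s$ is exactly what circumvents the $2$-torsion obstruction for groups of even order: the sum edges can be entirely absent when $G$ has much $2$-torsion, which --- together with the trivial choice $v=0$ --- is precisely why $\delta(\HH_n,\B_n)$ vanishes and Theorem~\ref{genthm} cannot be applied directly here. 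Note also that in passing to $\Gamma_S$ I have discarded the Schur relations $j+j=2j$; this only weakens the upper bound and spares us from controlling the possibly large set $\{j\in D:2j\in S\}$.

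Now I apply the Hypergeometric Janson Inequality (Lemma~\ref{HJI}) directly to $\Gamma_S$ with $X=D$ and $m-k$ in place of $m$ --- directly, rather than quoting Lemma~\ref{lem:Zhao}, whose constant $C_0(\beta)$ is far larger than $4$ and would require $m\ge C_0\sqrt{n\log n}$. With $e(\Gamma_S)\ge kn/8$, $\Delta(\Gamma_S)\le 3k$ and $|D|=n/2$ one computes $\mu\ge k(m-k)^2/(2n)$ and $\mu^2/\Delta\ge(m-k)/24$, whence
\[
I(H,S)\;\le\;C\cdot\max\!\Big\{e^{-k(m-k)^2/(4n)},\ e^{-(m-k)/48}\Big\}\,\binom{n/2}{m-k}.
\]
Feeding this into $\sum_{k\ge1}\sum_{|S|=k}$, bounding $\sum_{|S|=k}$ by $\binom{n/2}{k}$, and comparing $\binom{n/2}{k}\binom{n/2}{m-k}$ with $\binom{n/2}{m}$ yields the displayed estimate after summing over the $\le n$ subgroups $H$. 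This last summation is a routine computation carried out by a case analysis on the size of $m$ exactly as in the proof of Proposition~\ref{prop:Janson}: when $m\le n/4$ one uses $\binom{n/2}{m-k}/\binom{n/2}{m}\le(4m/n)^k$ (so the powers of $n$ cancel against $\binom{n/2}{k}$) and $m\ge4\sqrt{n\log n}$ (so $\mu\ge 8k\log n$ up to an arbitrarily small loss, giving $e^{-\mu/2}\le n^{-3k}$); when $m>n/4$ one uses $\binom{n/2}{m-k}\le\binom{m}{k}\binom{n/2}{m}$ together with the fact that both Janson exponents are then $\Omega(n)$; and throughout one chooses $\delta$ to be a sufficiently small absolute constant so that $\binom{n/2}{k}$, as $k$ ranges up to $\delta m$, is absorbed by the exponential gain whenever the $e^{-(m-k)/48}$ term dominates.

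The step I expect to require the most care is the edge lower bound $e(\Gamma_S)=\Omega(|S|\,n)$, and in particular the realization that it must be proved using the relations $s+j=j'$ rather than $j+j'=s$: this is where the parity subtleties of even-order groups enter, and the reason this section cannot simply invoke the hypergraph machinery of Sections~\ref{AlgSec}--\ref{JansonSec}. Everything else --- the reduction to a single pair $(H,S)$, the identity $\#\{\text{index-}2\text{ subgroups}\}=\#\{\text{elements of order }2\}$, and the final binomial bookkeeping --- is routine.
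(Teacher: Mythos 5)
Your proof is correct and follows essentially the same route as the paper: sort by $(H,S)$ with $S=I\cap H$, apply the Hypergeometric Janson Inequality directly (not via Lemma~\ref{lem:Zhao}) to a Cayley-type graph on $G\setminus H$, and sum using the identity that the number of index-$2$ subgroups equals the number of order-$2$ elements. The only cosmetic difference is that the paper's graph $\G^*_S$ carries only the difference edges and is observed to be exactly $|S\cup(-S)|$-regular on $G\setminus H$, whereas you allow sum edges in $\Gamma_S$ but then derive the same edge lower bound from the difference edges alone --- functionally identical.
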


For sets in $\SF_\ge^{(\delta)}(G,m)$, i.e., far from any $H \le G$ of index $2$, we shall prove the following stronger bound using Theorem~\ref{thm:graphs}.

\begin{prop}\label{prop:SF2}
  Let $G$ be an Abelian group of order $n$, and let $\delta > 0$. If $\eps = \eps(\delta) > 0$ is sufficiently small and $C = C(\delta)$ is sufficiently large, then
  $$|\SF_\ge^{(\delta)}(G,m)| \; \le \; 2^{-\eps m} {{n/2} \choose m}$$
  for every $m \ge C\sqrt{n}$ and every sufficiently large $n \in \N$.
\end{prop}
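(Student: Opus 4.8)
The plan is to realise each $A\in\SF_\ge^{(\delta)}(G,m)$ as an independent set in a Cayley‑type graph on $G$ whose smallest eigenvalue is bounded away from $-d$, and then to run the Kleitman--Winston / Basic Algorithm of Section~\ref{GraphSec} on that graph, exactly as in the proof of Theorem~\ref{thm:graphs-loc-dense}. The starting observation is that if $P\subseteq A$ then $A$ is an independent set in the Cayley sum graph $\Gamma_P$ on vertex set $G$ with $x\sim y$ whenever $x+y\in P$: indeed $x,y\in A$ gives $x+y\in A+A$, which is disjoint from $A\supseteq P$ since $A$ is sum‑free. A standard character computation shows that $\Gamma_P$ is essentially $|P|$‑regular and that its nonprincipal eigenvalues are exactly $\pm\big|\sum_{p\in P}\chi(p)\big|$ over the nonprincipal characters $\chi$ of $G$; hence $\lambda(\Gamma_P)\ge-\max_{\chi\neq\chi_0}\big|\sum_{p\in P}\chi(p)\big|$. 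So everything comes down to finding, inside $A$, a set $P$ that is not too concentrated on any single nonprincipal character.

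I would fix small positive constants $\eta$ and $c$ (with $c$ small in terms of $\eta$ and $\delta$) and split according to a Fourier dichotomy: either (i) $\big|\sum_{a\in A}\chi(a)\big|\le(1-c)m$ for every $\chi\neq\chi_0$, or (ii) some $\chi\neq\chi_0$ has $\big|\sum_{a\in A}\chi(a)\big|>(1-c)m$. In case (i), let $P$ be the first $d$ elements of $A$ in the fixed ordering of $G$, where $d\asymp\sqrt n$; a concentration estimate for sampling without replacement together with a union bound over the at most $n$ characters gives (for a suitable deterministic choice, or with positive probability over a random $d$‑subset) that $\big|\sum_{p\in P}\chi(p)\big|\le(1-c/2)d$ for all $\chi\neq\chi_0$, so $\lambda(\Gamma_P)\ge-(1-c/2)d$. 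By the Alon--Chung lemma (Lemma~\ref{lemma:AC}) this yields $2e_{\Gamma_P}(A')\ge\eps'|A'|d$ for every $A'\subseteq G$ with $|A'|\ge(\tfrac12-c')n$, for suitable $\eps',c'>0$. Running the Basic Algorithm on $\Gamma_P$, with $P$ placed into the selected set at the outset, then encodes $A$ by a pair $(S,A\setminus S)$ with $P\subseteq S$, $|S|\le d+n/(\eps'd)\le\beta^2m$ (choosing $d\asymp\sqrt n$ optimally and $C=C(\delta)$ large enough), and with $A\setminus S$ lying in an available set of size at most $(\tfrac12-c')n$ that is determined by $S$. Hence the number of such $A$ is at most $\sum_{t\le\beta^2m}\binom nt\binom{(1/2-c')n}{m-t}$, which the arithmetic in the proof of Theorem~\ref{thm:graphs-loc-dense} — pairing $\binom nt$ against the shrinking binomial coefficient $\binom{(1/2-c')n}{m-t}$ — bounds by $2^{-\eps m}\binom{n/2}{m}$.

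For case (ii), if $\chi$ has order $k$ then $\big|\sum_{a\in A}\chi(a)\big|>(1-c)m$ forces all but at most $\eta m$ elements of $A$ to lie in a union of $O(\sqrt{c/\eta}\,k)$ consecutive cosets of $\ker\chi$. When $k\ge3$ this union is a set of size below $(\tfrac12-c')n$ described by only $n^{O(1)}$ parameters (the character and an arc in $G/\ker\chi$), so the number of such $A$ is at most $n^{O(1)}\binom n{\eta m}\binom{(1/2-c')n}{m}\le2^{-\eps m}\binom{n/2}{m}$. When $k=2$, say $\ker\chi=H$ with $[G:H]=2$, the identity $\sum_{a\in A}\chi(a)=2|A\cap H|-m$ together with the defining property $|A\cap H|\ge\delta m$ of $\SF_\ge^{(\delta)}$ forces $2|A\cap H|-m>(1-c)m$ (the alternative $<-(1-c)m$ is incompatible with $|A\cap H|\ge\delta m$) — this is exactly the point at which the hypothesis on $\SF_\ge^{(\delta)}$, rather than on all sum‑free sets, is used, and it is also what prevents $A$ from being close to the extremal set $H\setminus H$... i.e.\ to the maximum‑size sum‑free set $G\setminus H$. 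Thus all but $(c/2)m<\eta m$ elements of $A$ lie in $H$, $A\cap H$ is a sum‑free subset of the group $H$, and I would apply Theorem~\ref{q=2} inductively to $H$: the number of such $A$ is then at most $\binom n{\eta m}\cdot n\cdot\big((1/2-c'')\tbinom{|H|/2}{\cdot}\text{-type bound}\big)$, and since $|H|/2=n/4<n/2$ and $\binom n{\eta m}=2^{o(m)}$ for $\eta$ small, the factor $\binom{n/4}{\cdot}/\binom{n/2}{\cdot}\le 2^{-m}$ swallows all the overhead, giving $2^{-\eps m}\binom{n/2}{m}$ again; combining the cases proves the proposition.

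The step I expect to be the main obstacle is making the encoding in case (i) short enough: one must resist fixing the pivot $P$ first and then invoking a bound for $\Gamma_P$, since there are $\binom nd$ choices of $P$ with $d\asymp\sqrt n$, which is fatal precisely in the range $m=\Theta(\sqrt n)$ that the proposition is about; instead $P$ is recovered as part of the short algorithmic output $S$, whose size $O(\sqrt n)$ is small compared with $m$, and the cost $\binom n{|S|}$ is compensated by the available set having size bounded away from $n/2$ — this is the mechanism already behind Theorem~\ref{thm:graphs-loc-dense} and the Main Algorithm of Section~\ref{AlgSec}. A secondary difficulty is the bookkeeping in case (ii): checking that a large Fourier coefficient really yields a bounded union of cosets, matching the hypothesis $m\ge C\sqrt n$ of the proposition against the hypothesis $m\ge4\sqrt{n\log n}$ of Theorem~\ref{q=2} when one recurses into $H$ (here one uses that $4\sqrt{(n/2)\log(n/2)}<4\sqrt{n\log n}$, so the inductive call remains legitimate), and making sure that iterating this step into nested index‑$2$ subgroups terminates after $O(\log n)$ steps and costs only $n^{O(\log n)}=2^{o(m)}$. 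None of this uses Proposition~\ref{GRLLS} or the structural theorems of Green--Ruzsa, so the resulting proof of Theorem~\ref{q=2} is self‑contained.
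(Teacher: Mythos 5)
Your overall strategy — realise $A$ as an independent set in a Cayley graph whose smallest eigenvalue is controlled by the Fourier coefficients of a small set $P\subseteq A$, and run a Kleitman--Winston argument as in Theorem~\ref{thm:graphs-loc-dense} — is the same as the paper's, but your branch for a real character (case~(ii), $k=2$) is based on an inaccurate eigenvalue calculation, and the recursion you build there has a gap. The spectrum of the Cayley sum graph $\Gamma_P$ is \emph{not} $\pm\bigl|\sum_{p\in P}\chi(p)\bigr|$ over all nonprincipal $\chi$: the symmetric pair $\pm|c_\chi|$ arises only from non-real conjugate pairs $\{\chi,\overline\chi\}$ (which span a two-dimensional $A(\Gamma_P)$-invariant subspace), whereas a \emph{real} character $\chi$ of order $2$ is itself an eigenvector with eigenvalue $c_\chi=\sum_{p\in P}\chi(p)$, with no sign flip. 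Therefore a large \emph{positive} $\sum_{a\in A}\chi(a)$ for a real $\chi$ does not push $\lambda(\Gamma_P)$ toward $-|P|$; only a large \emph{negative} value would, and that is already forbidden by $|A\cap H|\ge\delta m$ (take $c<2\delta$). So case~(ii) should be restricted to non-real $\chi$ with $|\rng\chi|\ge3$; the $k=2$ branch never occurs, and the entire recursion into index-$2$ subgroups is unnecessary. This is exactly what the paper exploits, in different packaging, by defining $\lambda(I)=\min_\chi\Re\sum_{x\in I}\chi(x)$ and working with the \emph{difference} graph $\G^*_S$: in Lemma~\ref{SF3_big}, Case~1, the identity $\lambda(I,\chi)=2|I\cap H|-m\ge(2\delta-1)m>(\delta-1)m$ makes the $k=2$ sub-case empty.

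Even disregarding the above, the recursion as written does not prove the statement over the whole claimed range $m\ge C\sqrt n$: the inductive call of Theorem~\ref{q=2} to $H$ requires $m-j\ge4\sqrt{(n/2)\log(n/2)}$, which fails for $C\sqrt n\le m<4\sqrt{(n/2)\log(n/2)}$ with $C$ a constant. Also, for $m\asymp\sqrt n$ the claim $\binom n{\eta m}=2^{o(m)}$ is false (one has $\binom n{\eta m}=n^{\Theta(\eta m)}$); the bound $\sum_{j\le\eta m}\binom{n/2}{j}\binom{n/4}{m-j}\le2^{-\eps m}\binom{n/2}{m}$ can be rescued, but only by first using $\binom{n/4}{m-j}/\binom{n/4}{m}\le(4m/n)^j$ so that the $n$-dependence in $\binom{n/2}{j}$ cancels. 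Your remaining steps — case~(i) via a Chernoff-chosen $P$ followed by a Kleitman--Winston run, and case~(ii) with $k\ge3$ via concentration on a short arc — parallel the paper's Lemma~\ref{SF3_small} and Lemma~\ref{SF3_big}, Case~2, and look sound.
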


We begin by proving Proposition~\ref{prop:SF1}. In this section, we shall use a slightly different notion of Cayley graph than that used earlier. Given $S \subseteq G$, define $\G^*_S$ to be the graph with vertex set $G \setminus S$ and edge set $\big\{  xy \colon x - y \in S \big\}$, and note that if $I$ is a sum-free set in $G$ with $S \subseteq I$, then $I \setminus S$ is an independent set in $\G^*_S$. 

\begin{proof}[Proof of Proposition~\ref{prop:SF1}] 
  Let $G$ be an Abelian group of even order $n$, let $H$ be a subgroup of $G$ of index $2$, and let $S \subseteq H$ satisfy $|S| = k \le \delta m$. Set $\gamma = 1/65$. We claim that for every $m \ge 4 \sqrt{n \log n}$, there are at most
\begin{equation}\label{eq:even1}
\Big( n^{-4k} + e^{-\gamma m} \Big) {n/2 \choose {m-k}}
\end{equation}
sum-free subsets $I$ of $G$ of order $m$ with $I \cap H = S$.

Observe first that the graph $\G^*_S[G \setminus H]$ is $d$-regular, where $d = |S \cup (-S)| \in [k,2k]$. Indeed, for each $x \in G \setminus H$, let
$$N(x) \, = \, \big\{ y \in G \setminus H \,\colon\, x - y \in S \textup{ or } y - x \in S \big\}.$$
Since $S \subseteq H$, it follows that $x - S$ and $x + S$ are in $G \setminus H$, and hence $|N(x)| = |S \cup (-S)|$, as claimed. Since $|S| = k$, we have $k \le d \le 2k$.

Now, by the Hypergeometric Janson Inequality, Lemma~\ref{HJI}, there are at most  
$$C \cdot \max\Big\{e^{-km^2/4n}, e^{-m/64} \Big\} {n/2 \choose {m-k}} \,\le\, \Big( n^{-4k} + e^{-\gamma m} \Big) {n/2 \choose {m-k}},$$
independent sets of size $m - k$ in $\G^*_S[G \setminus H]$. This follows because $k \le \delta m$, so
$$\mu \, \ge \, \left( \frac{kn}{4} \right) \left( \frac{(m-k)^2}{(n/2)^2} \right) \, \ge \, \frac{km^2}{2n} \quad \text{and} \quad \Delta \,\le\, {2k \choose 2} \frac{n}{2} \left( \frac{(m-k)^3}{(n/2)^3} \right) \, \le \, \frac{8k^2m^3}{n^2},$$
and $m \ge 4 \sqrt{n \log n}$. Since each sum-free subset $I \subseteq G$ induces an independent set in $\G^*_S[G \setminus H]$, then~\eqref{eq:even1} follows.

Finally, summing~\eqref{eq:even1} over subgroups $H$ and sets $S$, we obtain
\begin{eqnarray}
  \label{eq:SF-le-delta}
  |\SF_\le^{(\delta)}(G,m)| & \le & \#\big\{H \le G \colon [G:H] = 2 \} \sum_{k = 0}^{\delta m} {n \choose k} \Big( n^{-4k} + e^{-\gamma m} \Big) {n/2 \choose {m-k}}\\
  \nonumber
  & \le & \left( \# \big\{\text{elements of $G$ of order $2$} \big\} \,+\, O\left( \frac{1}{n^2} \right) \right) {n/2 \choose m} 
\end{eqnarray}
for every $m \ge 4\sqrt{n \log n}$. To see the last inequality, observe that the number of subgroups $H$ of index $2$ in $G$ is exactly the number of elements of $G$ of order $2$ and consider three cases as in the proof of Proposition~\ref{prop:Janson}. Indeed, if $n^{-4k} \ge e^{-\gamma m}$ or $m \ge n/4$, then each summand in~\eqref{eq:SF-le-delta} is at most $\big(n^{-2k} + e^{-\gamma m/2} \big) {n/2 \choose m}$ by the trivial bound ${n/2 \choose m-k} \le {n \choose k} {n/2 \choose m}$. But if $n^{-4k} \le e^{-\gamma m}$ and $m \le n / 4$, then by~\eqref{MAeq2}, 
$${n \choose k}{n/2 \choose {m-k}} \, \le \, \left( \frac{en}{k} \right)^k \left( \frac{2m}{n - 2m} \right)^k {n/2 \choose m} \, \le \, \left( \frac{4em}{k} \right)^k {n/2 \choose m} \, \le \, e^{O(\sqrt{\delta} m)} {n/2 \choose m}$$
since $k \le \delta m$. Thus, if $\delta > 0$ is chosen small enough, then each summand in~\eqref{eq:SF-le-delta} is at most $e^{-\gamma m/2} {n/2 \choose m}$, as required. 
\end{proof}

We next turn to the proof of Proposition~\ref{prop:SF2}. We shall divide
into two cases: either the smallest eigenvalue $\lambda(I)$ of $I$
(see below) is at most $(\delta - 1)|I|$, in which case we shall use
some basic facts about characters of finite Abelian groups to show
that there are few such sets; or $\lambda(I)$ is larger, in which
case we shall find a small subset $S \subseteq I$ such that $\G^*_S$
is a $d$-regular graph with smallest eigenvalue satisfying
$\lambda > (\delta/4-1)d$, and apply
Theorem~\ref{thm:graphs}. We begin with the following key definition.

\begin{defn}[The smallest eigenvalue of $S$]
  Given a finite Abelian group $G$, and a subset $0 \not\in S \subseteq G$, let 
  \[
  \lambda(S) \, := \, \min\big\{ \Re(\lambda) \,\colon\, A(S)v = \lambda v \text{ for some $v \neq \mathbf{0}$} \big\},
  \]
  where $\Re(\lambda)$ is the real part of the complex number $\lambda$, $A(S)$ is the adjacency matrix of the directed Cayley graph on $G$, i.e., the $(0,1)$-matrix with $A(x,y) = 1$ iff $y - x \in S$, and $\mathbf{0}$ is the zero vector.
\end{defn}

Next, we recall some simple properties of characters of finite Abelian groups.

\subsection{Characters of finite Abelian groups}

\begin{defn}
  A {\em character} of a group $G$ is a homomorphism from $G$ into the multiplicative group of non-zero complex numbers, i.e., a function $\chi \colon G \to \Cs$ such that $\chi(a + b) = \chi(a)\chi(b)$ for all $a, b \in G$. 
\end{defn}

A character $\chi$ is called trivial if $\chi(x) = 1$ for all $x \in G$;
we will denote the trivial character by $\chit$. The set of all characters
of $G$ is denoted by $\Gh$. The following statement establishes a relation
between the smallest eigenvalue of the matrix $A(S)$ and the characters
of $G$.

\begin{lemma}\label{lambda}
  For every $0 \not\in S \subseteq G$, 
  \[
  \lambda(S) \,=\, \min\left\{ \Re \left( \sum_{s \in S} \chi(s) \right) \,\colon\, \chi \in \Gh \right\} \,\ge\, -|S|.
  \]
\end{lemma}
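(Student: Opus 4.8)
The plan is to diagonalize the Cayley matrix $A(S)$ explicitly using the characters of $G$, which form a complete orthogonal system of eigenvectors since $G$ is abelian. First I would recall the standard fact that for each character $\chi \in \Gh$, the vector $v_\chi = (\chi(x))_{x \in G}$ is an eigenvector of $A(S)$: indeed, $(A(S) v_\chi)(x) = \sum_{y : y - x \in S} \chi(y) = \sum_{s \in S} \chi(x + s) = \chi(x) \sum_{s \in S} \chi(s)$, so $v_\chi$ has eigenvalue $\lambda_\chi := \sum_{s \in S} \chi(s)$. Since $|\Gh| = |G|$ and the vectors $\{v_\chi\}$ are pairwise orthogonal (hence linearly independent), they form a basis, so these are \emph{all} the eigenvalues of $A(S)$ (with multiplicity). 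Therefore
\[
\lambda(S) \,=\, \min_{\chi \in \Gh} \Re(\lambda_\chi) \,=\, \min_{\chi \in \Gh} \Re\left( \sum_{s \in S} \chi(s) \right),
\]
which is the claimed identity.

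For the lower bound $\lambda(S) \ge -|S|$, I would simply note that for any character $\chi$ and any $s \in G$, $|\chi(s)| = 1$ (since $\chi(s)$ is a root of unity: $\chi(s)^{|G|} = \chi(|G| \cdot s) = \chi(0) = 1$). Hence $\Re(\chi(s)) \ge -1$ for each $s$, and summing over the $|S|$ elements of $S$ gives $\Re\left(\sum_{s \in S}\chi(s)\right) \ge -|S|$ for every $\chi$; taking the minimum yields $\lambda(S) \ge -|S|$.

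The only genuine point requiring care is that the minimum in the definition of $\lambda(S)$ ranges over \emph{all} eigenvalues of the complex matrix $A(S)$, not just those arising from character eigenvectors, so one must be sure the characters exhaust the spectrum. This follows from the orthogonality relations for characters of a finite abelian group (equivalently, from the fact that $A(S) = \sum_{s \in S} P_s$ where $P_s$ is the permutation matrix of translation by $s$, and all the $P_s$ commute and are simultaneously diagonalized by the character basis); since there are exactly $|G|$ characters and $|G| = \dim \CC^G$, linear independence of the $v_\chi$ is forced by orthogonality. I expect this to be the main (though still routine) obstacle, and it is handled entirely by citing the standard character theory of finite abelian groups. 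No other step presents difficulty.
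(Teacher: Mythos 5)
Your proof is correct and follows essentially the same route as the paper: both show that each character $\chi$ is an eigenvector of $A(S)$ with eigenvalue $\sum_{s\in S}\chi(s)$, invoke orthogonality of characters to conclude these exhaust the spectrum, and then bound $\Re(\chi(s))\ge -1$ using $|\chi(s)|=1$. The only cosmetic difference is that the paper decomposes $A(S)=\sum_{s\in S}A_s$ and verifies the eigenvector property for each translation matrix $A_s$ separately, whereas you compute $(A(S)v_\chi)(x)$ directly in one step — the underlying argument is identical.
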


We shall use the following facts about finite Abelian groups in the proof of Lemma~\ref{lambda}.

\begin{fact}\label{fact1}
If $G$ is a finite Abelian group of order $n$, then all its characters take values in the set 
$$U_n \, := \, \big\{ (\xi_n)^k \colon k \in \{0, \ldots, n-1\} \big\}, \quad \text{where} \quad \xi_n = e^{2\pi i/n},$$
of $n$th roots of unity. Moreover, if $\chi \in \Gh$ then $\rng(\chi) = U_k$ for some $k = k(\chi)$.
\end{fact}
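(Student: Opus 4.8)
The plan is to reduce Fact~\ref{fact1} to two classical facts: Lagrange's theorem for the finite group $G$, and the classification of the finite subgroups of the multiplicative group $\Cs$.

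First I would record that $\rng(\chi)$ is a subgroup of $\Cs$: since $\chi$ is a group homomorphism, its image is a subgroup of $\Cs$, and since $G$ is finite this image is finite. For the first assertion, fix $x \in G$. By Lagrange's theorem the order of $x$ divides $n = |G|$, so $nx = 0$ in $G$; applying $\chi$ and using multiplicativity (together with $\chi(0) = 1$, which follows from $\chi(0) = \chi(0)^2$ and $\chi(0) \neq 0$) gives $\chi(x)^n = \chi(nx) = \chi(0) = 1$. Hence $\chi(x)$ is an $n$th root of unity, i.e. $\chi(x) \in U_n$, and since $x$ was arbitrary, $\rng(\chi) \subseteq U_n$.

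For the ``moreover'' part I would invoke the standard fact that every finite subgroup of $\Cs$ is cyclic and equals $U_k$ for $k$ its order: a finite subgroup of the multiplicative group of any field is cyclic (since $X^k - 1$ has at most $k$ roots in $\C$, which forces cyclicity), and the unique cyclic subgroup of $\Cs$ of order $k$ is precisely the group $U_k$ of $k$th roots of unity. Applying this to the finite subgroup $\rng(\chi) \le \Cs$ yields $\rng(\chi) = U_k$ with $k = k(\chi) := |\rng(\chi)| = [G : \ker \chi]$, as required. There is essentially no obstacle here; the only point that I would cite rather than reprove is the cyclicity of a finite subgroup of $\Cs$, which is entirely classical.
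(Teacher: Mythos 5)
Your proof is correct and is the standard argument; the paper states Fact~\ref{fact1} without proof, treating it as a classical background fact, and your write-up (Lagrange for $\chi(x)^n = 1$, then cyclicity of finite subgroups of $\Cs$ for $\rng(\chi) = U_k$) is exactly the textbook justification one would supply.
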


\begin{fact}\label{fact2}
$\Gh$ is an orthogonal basis of the vector space $\C^G$.
\end{fact}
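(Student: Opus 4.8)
The plan is to equip $\C^G$ — the space of functions $G \to \C$, which has dimension $|G|$ over $\C$ — with the standard Hermitian inner product $\langle f, g \rangle = \sum_{x \in G} f(x) \overline{g(x)}$, and then to show that $\Gh$ is a pairwise orthogonal set of exactly $|G|$ nonzero vectors. Since any such set in a $|G|$-dimensional inner product space is automatically a basis, this suffices.

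First I would prove the orthogonality relations. The key auxiliary fact is that for any character $\eta \in \Gh$,
\[
\sum_{x \in G} \eta(x) \,=\, \begin{cases} |G| & \text{if } \eta = \chit, \\ 0 & \text{otherwise.} \end{cases}
\]
The first case is immediate; for the second, choose $y \in G$ with $\eta(y) \neq 1$, and observe that re-indexing the sum by $x \mapsto x + y$ gives $\eta(y) \sum_{x} \eta(x) = \sum_x \eta(x + y) = \sum_x \eta(x)$, so $(\eta(y) - 1)\sum_x \eta(x) = 0$ and the sum vanishes. Now, given $\chi, \psi \in \Gh$, by Fact~\ref{fact1} each value $\psi(x)$ lies on the unit circle, so $\overline{\psi(x)} = \psi(x)^{-1} = \psi(-x)$; therefore $x \mapsto \chi(x)\overline{\psi(x)}$ is again a character (namely $\chi \psi^{-1}$), trivial precisely when $\chi = \psi$. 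Applying the auxiliary fact yields $\langle \chi, \psi \rangle = |G|$ if $\chi = \psi$ and $0$ otherwise. In particular the characters are pairwise orthogonal, hence linearly independent — each is a nonzero vector, since $\chi(0) = 1$.

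It remains to check $|\Gh| = |G|$. Here I would invoke Kronecker's decomposition theorem (already used in the paper) to write $G \cong \ZZ_{n_1} \times \cdots \times \ZZ_{n_k}$ with $n_1 \cdots n_k = |G|$. A character of $\ZZ_n$ is determined by the image of the generator $1$, which may be any of the $n$ solutions of $z^n = 1$, and conversely each such choice extends to a character; so $\ZZ_n$ has exactly $n$ characters. A character of a product $G_1 \times G_2$ restricts to a character on each factor via the inclusions $G_i \hookrightarrow G_1 \times G_2$, and is recovered from these restrictions by $\chi(g_1, g_2) = \chi_1(g_1)\chi_2(g_2)$; conversely any such product is a character. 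Hence $|\widehat{G_1 \times G_2}| = |\widehat{G_1}| \cdot |\widehat{G_2}|$, and by induction $|\Gh| = n_1 \cdots n_k = |G|$.

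Combining the two parts, $\Gh$ consists of $|G|$ pairwise orthogonal nonzero elements of the $|G|$-dimensional inner product space $\C^G$, and so it is an orthogonal basis, as claimed. There is no real obstacle here — the one external input is Kronecker's decomposition theorem; if one preferred to avoid it, the equality $|\Gh| = |G|$ could instead be obtained by induction on $|G|$, lifting characters of $G/H$ and extending characters of $H$ along a short exact sequence $1 \to H \to G \to G/H \to 1$ for a cyclic subgroup $H$, but this is slightly more involved than the structure-theorem route.
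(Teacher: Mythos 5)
Your proof is correct and complete: the orthogonality relations (via the vanishing of $\sum_{x}\eta(x)$ for nontrivial $\eta$, using that $\overline{\psi(x)}=\psi(-x)$ by Fact~\ref{fact1}) together with the count $|\Gh|=|G|$ give exactly the standard argument. The paper states Fact~\ref{fact2} as a known result and supplies no proof, so there is nothing to compare against; your write-up fills that gap in the conventional way.
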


\begin{proof}[Proof of Lemma~\ref{lambda}]
Let us start by breaking up the adjacency matrix $A(S)$ into $|S|$ pieces as follows:
$$A(S) = \sum_{s \in S} A_s, \quad \text{where} \quad A_s(x,y) = \left\{
\begin{array} {r@{\quad}ll} 1 & \text{if } \; y - x = s & \\[+1ex]
0 & \text{otherwise.} &
\end{array}\right.$$
Thus, for every $s, x \in G$ and $\chi \in \Gh$,
$$(A_s\chi)(x) \,=\, \chi(x+s) \,=\, \chi(x)\chi(s) \,=\, (\chi(s)\chi)(x),$$
and so every $\chi \in \Gh$ is an eigenvector of each $A_s$ with $\chi(s)$ being the corresponding eigenvalue. Hence, every $\chi \in \Gh$ is an eigenvector of $A(S)$, with eigenvalue $\sum_{s \in S} \chi(s)$. Since the characters of $G$ form an orthogonal basis of $\C^G$, it follows that the set of eigenvalues of $A(S)$ is exactly
$$\bigg\{ \sum_{s \in S} \chi(s) \,\colon\, \chi \in \Gh \bigg\}.$$
The inequality $\lambda(S) \ge -|S|$ follows since $|\chi(s)| = 1$ for every $s \in S$ and $\chi \in \Gh$.
\end{proof}

Let us note for future reference the following fact from the proof above.

\begin{lemma}\label{evectors}
  For every $0 \not\in S \subseteq G$, the characters of $G$ form a basis of eigenvectors of the matrix $A(S)$.
\end{lemma}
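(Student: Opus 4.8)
The plan is to observe that this lemma is essentially a by-product of the computation already carried out in the proof of Lemma~\ref{lambda}. There we checked that for every character $\chi \in \Gh$ and every $x \in G$,
$$(A(S)\chi)(x) \,=\, \sum_{s \in S} \chi(x+s) \,=\, \Big( \sum_{s \in S} \chi(s) \Big) \chi(x),$$
so that each $\chi \in \Gh$ is an eigenvector of $A(S)$ with eigenvalue $\sum_{s \in S} \chi(s)$. It then suffices to invoke Fact~\ref{fact2}, which says that $\Gh$ is an orthogonal basis of $\C^G$; combining this with the previous sentence shows that $\C^G$ has a basis — namely $\Gh$ — consisting of eigenvectors of $A(S)$, which is exactly the assertion of the lemma.

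I do not expect any genuine obstacle here, since the lemma is really just a repackaging of facts established above. The only point worth spelling out is why "$\Gh$ is a basis" and not merely "$\Gh$ spans": there are exactly $|G| = \dim_{\C} \C^G$ characters, and being pairwise orthogonal and nonzero they are automatically linearly independent, so they do form a basis — and this is precisely what Fact~\ref{fact2} records.
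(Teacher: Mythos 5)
Your proposal is correct and matches the paper exactly: Lemma~\ref{evectors} is stated there as an explicit by-product of the proof of Lemma~\ref{lambda}, where it is shown that each $\chi \in \Gh$ is an eigenvector of $A(S)$ with eigenvalue $\sum_{s \in S}\chi(s)$, and Fact~\ref{fact2} supplies that these characters form a basis of $\C^G$. The only cosmetic difference is that you compute $A(S)\chi$ directly while the paper first decomposes $A(S)=\sum_{s\in S}A_s$, but this is the same calculation.
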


\subsection{Sum-free sets with small smallest eigenvalue}

Using the properties described above, we shall prove the following lemma.

\begin{lemma}\label{SF3_big}
  Let $\delta > 0$ be sufficiently small and let $n \in \N$ be sufficiently large. Then, there exist constants $\eps = \eps(\delta) > 0$ and $C = C(\delta)$ such that, for every $m \ge C\sqrt{n}$,
  \[
  \left| \Big\{ I \in \SF_\ge^{(\delta)}(G,m) \,\colon\, \lambda(I) \le (\delta - 1)|I| \Big\} \right| \, \le \, 2^{-\eps m} {{n/2} \choose {m}}.
  \]
\end{lemma}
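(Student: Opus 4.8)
I want to bound the number of sum-free sets $I \in \SF_\ge^{(\delta)}(G,m)$ whose smallest eigenvalue $\lambda(I)$ is small, namely $\lambda(I) \le (\delta-1)|I|$. By Lemma~\ref{lambda}, $\lambda(I) = \min_{\chi \in \Gh} \Re\big( \sum_{s \in I} \chi(s) \big)$, so the hypothesis $\lambda(I) \le (\delta-1)m$ says that there exists a character $\chi$ of $G$ with $\Re\big(\sum_{s \in I} \chi(s)\big) \le (\delta - 1)m$. The strategy is to fix such a character $\chi$, observe that there are only $n$ characters in total (since $|\Gh| = |G| = n$), and for each $\chi$ bound the number of $m$-subsets $I$ of $G$ with $\Re\big(\sum_{s \in I}\chi(s)\big) \le (\delta-1)m$. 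Multiplying the per-character bound by the factor $n$ will be harmless, since $n \le 2^{o(m)}$ when $m \ge C\sqrt n$.

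**Main step: the per-character count.** Fix $\chi \in \Gh$. By Fact~\ref{fact1}, $\chi$ takes values in the $k$th roots of unity where $k = k(\chi)$; write $G_j = \{ g \in G : \chi(g) = \xi_k^j \}$ for $j = 0, \ldots, k-1$, so the $G_j$ partition $G$ into $k$ cosets of the kernel $H = \ker\chi$, each of size $n/k$ (this uses that $\chi$ is a surjective homomorphism onto $U_k$). For an $m$-set $I$, let $a_j = |I \cap G_j|$, so $\sum_j a_j = m$ and $\Re\big(\sum_{s \in I}\chi(s)\big) = \sum_j a_j \cos(2\pi j/k)$. The condition $\Re(\cdots) \le (\delta-1)m$ forces the weighted average $\frac{1}{m}\sum_j a_j \cos(2\pi j/k)$ to be close to $-1$, which (since $\cos \ge -1$ everywhere, with equality only near $j \approx k/2$) forces almost all the mass of $(a_j)$ to concentrate on the indices $j$ with $\cos(2\pi j/k) \le -1 + \eta$ for small $\eta$, i.e. on a small number of cosets near the ``antipodal'' coset. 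If $k=2$ this is the single coset $G_1$, i.e. $I$ lies almost entirely in one coset, contradicting $I \in \SF_\ge^{(\delta)}(G,m)$ once $\delta$ is small — so that case gives zero sets. For $k \ge 3$ there are $O(\sqrt{k} \cdot \eta) \cdot k$, hence $O(\eta k)$, such cosets, and since their union has size $O(\eta) n$, the number of ways to place the bulk $(1-O(\delta))m$ elements of $I$ there is at most $\binom{O(\eta)n}{m}$, with the remaining $O(\delta)m$ elements placed arbitrarily among the other $n$ elements: a standard entropy/binomial estimate of the form $\binom{n}{O(\delta)m}\binom{O(\eta)n}{m} \le e^{O(\delta \log(1/\delta))m}\big(\tfrac{O(\eta)n}{n/2}\big)^{m}\binom{n/2}{m}$, which is at most $2^{-2\eps m}\binom{n/2}{m}$ once $\eta$ (a function of $\delta$, made as small as we like by making $\delta$ small) and then $\delta$ are chosen small enough. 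Summing over the $\le n$ choices of $\chi$ absorbs the extra factor $n$ and yields the claimed $2^{-\eps m}\binom{n/2}{m}$.

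**Where the difficulty lies.** The routine parts are the binomial manipulations and the reduction to a single character. The one genuinely delicate point is the combinatorial/geometric lemma quantifying how concentrated $(a_j)$ must be: I need that if $\sum_j a_j \cos(2\pi j/k) \le (\delta - 1)\sum_j a_j$ then $\sum_{j : \cos(2\pi j/k) > -1+\eta} a_j \le \tfrac{\delta}{\eta}\sum_j a_j$ (immediate by Markov applied to $1 + \cos(2\pi j/k) \ge 0$), together with the count of indices $j \in \{0,\ldots,k-1\}$ with $\cos(2\pi j / k) \le -1 + \eta$ — which is $O(\sqrt{\eta}\, k)$ by a Taylor expansion of cosine near $\pi$ — so that the ``antipodal'' cosets occupy a $(1-O(\sqrt\eta))$-fraction of $I$ but only an $O(\sqrt\eta)$-fraction of $G$. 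I must also handle the edge case where $I$ concentrates into a single coset $G_j$ with $j \neq 0$; if $G_j$ happens not to be contained in (or to be) a subgroup of index $2$ this is fine, but if $G_j = x + H$ with $[G:H]=2$ then $I$ being almost inside $x+H$ is exactly ruled out by $I \in \SF_\ge^{(\delta)}(G,m)$ (which says $|I \cap H| \ge \delta m$ for \emph{every} index-$2$ subgroup, and symmetrically one checks $|I \setminus H|$ cannot be tiny either, since $I \cap (x+H)$ being almost all of $I$ forces $|I \cap H|$ small). Pinning down this last case cleanly is the main obstacle; everything else is bookkeeping.
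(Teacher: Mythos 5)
Your proposal follows essentially the same route as the paper's proof: reduce to a per-character bound, union over the $\le n$ characters, rule out characters with $|\rng(\chi)| = 2$ using the $\SF_\ge^{(\delta)}$ hypothesis, and for $k \ge 3$ show that $\Re\sum_{s\in I}\chi(s)\le(\delta-1)m$ forces $I$ to concentrate in a proper subset of $G$ of size bounded away from $n/2$, after which a binomial estimate finishes. The only difference is mechanical: the paper detects concentration by considering arcs of length $\pi/3$ on the unit circle together with the pigeonhole observation that each such arc contains at most $k/3$ of the $k$-th roots of unity (so at most $n/3$ elements of $G$), while you apply Markov's inequality to $1+\cos(2\pi j/k)$ and a Taylor count of the near-antipodal indices; the two mechanisms are interchangeable. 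Two points in your sketch need tightening. First, the ``antipodal cosets'' do not have total size $O(\sqrt\eta)n$: for $k$ even the coset $G_{k/2}$ (where $\cos=-1$) alone has size $n/k$, so the correct bound on the region is $n/k + O(\sqrt\eta)n \le n/3 + O(\sqrt\eta)n$, which is still $<n/2$ for $\eta$ small — this is precisely why the paper's $\pi/3$ arcs give the clean bound $n/3$. Second, in the binomial estimate the second factor must be $\binom{\,\cdot\,}{m - O(\delta/\eta)m}$, not $\binom{\,\cdot\,}{m}$, so that the $(m/n)^{O(\delta/\eta)m}$ factor cancels the $(n/m)^{O(\delta/\eta)m}$ coming from $\binom{n}{O(\delta/\eta)m}$; without that cancellation $\binom{n}{O(\delta)m}$ alone is $n^{\Theta(\delta m)}$ when $m=\Theta(\sqrt n)$, which is not $e^{O(\delta\log(1/\delta))m}$. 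With these corrections your argument is the same as the paper's.
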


\begin{proof}
  For each character $\chi$ of $G$ and each $A \subseteq G$, define $\lambda(A,\chi) = \Re \left( \sum_{x \in A} \chi(x) \right)$. We shall bound the number of $I \in \SF_\ge^{(\delta)}(G,m)$ such that $\lambda(I,\chi) \le (\delta-1)|I|$ by
  \[
  \frac{2^{-\eps m}}{n} \binom{n/2}{m}.
  \]
  The desired bound will follow since $\lambda(I) = \min_{\chi \in \Gh} \lambda(I,\chi)$ and there are at most $|G|$ characters of $G$. We split into two cases, depending on the number of different values taken by~$\chi$. 

  \bigskip
  \noindent {\bf Case 1.} $|\rng(\chi)| = 2$.
  \medskip
  
  Since $\chi$ is a group homomorphism, it corresponds to a subgroup $H$ of $G$ of index $2$, namely, $H = \chi^{-1}(1)$. Since $|I \cap H| \ge \delta n$ for every such $H$, we have
  \[
  \lambda(I,\chi) \,=\, \Re \left(\sum_{x \in I} \chi(x)\right)  \,=\, |I \cap H| - |I \setminus H| \,\ge\, \big( 2\delta - 1 \big) |I|
  \]
  and hence in this case, there are no $I \in \SF_{\ge}^{(\delta)}(G,m)$ such that $\lambda(I,\chi) \le (\delta - 1)|I|$.

  \bigskip
  \noindent
  {\bf Case 2.} $|\rng(\chi)| \ge 3$.
  \medskip

  Let $k = |\rng(\chi)|$ and recall (from Fact~\ref{fact1}) that $\rng(\chi) = U_k$, where $U_k$ is the multiplicative group of $k$th roots of unity. Observe that $| \chi^{-1}(\xi) | = n/k$ for every $\xi \in U_k$, and consider, for each $\zeta$ on the complex unit circle $S^1$, the open arc $C_ \zeta$ of length $\pi/3$ centred at $\zeta$ on $S^1$. Set $K_\zeta := \chi^{-1}(C_\zeta)$, and note that $|C_\zeta \cap U_k| \le k/3$ for every $\zeta \in S^1$, and hence $|K_\zeta| \le n/3$. Note also that, even though there are infinitely many $C_\zeta$, there are at most $2k$ different sets $K_\zeta$. 
  
  Let $c > 0$ and suppose first that there exists $\zeta \in S^1$ such that $|K_\zeta \cap I| \ge (1 - c)|I|$. The number of such sets $I$ is at most
  $$2k \cdot \sum_{\ell = 0}^{c m} {n - |K_\zeta| \choose \ell}{|K_ \zeta | \choose m - \ell} \,\le\, n^2 {n/3 \choose (1-c)m} {2n/3 \choose c m} \,\le\, \left(\frac{2}{3} + c'\right)^m {n/2 \choose m},$$
  where $c'(c) \to 0$ as $c \to 0$.

  So suppose that $|K_ \zeta \cap I| \le (1 - c)|I|$ for every $\zeta \in S^1$. We claim that, if $\delta > 0$ is sufficiently small, then
  \begin{equation}\label{eq:case2}
  |\lambda(I,\chi)| \,=\, \Big| \sum_{x \in I} \chi(x) \Big| \,\le\, \Big( 1 - c + c \cdot  \cos\big( \pi/6 \big) \Big) \cdot |I| \, < \, \big( 1 - \delta \big)|I|.
  \end{equation}
To see this let $v = \sum_{x \in I} \chi(x)$, note that if $v = 0$ then we are done, and otherwise observe that, by our assumption, $\chi(x)$ can lie within the open arc of length $\pi/3$ centred in direction $v$ for at most $(1-c)|I|$ elements $x \in I$. Since each of the others contribute at most $\cos(\pi /6)$ in the direction of $v$,~\eqref{eq:case2} follows. This is a contradiction, so the proof is now complete.
\end{proof}

\subsection{Sum-free sets with large smallest eigenvalue}

We shall prove the following statement using Theorem~\ref{thm:graphs}. Together with Lemma~\ref{SF3_big} it will easily imply Proposition~\ref{prop:SF2}, and hence Theorem~\ref{q=2}.

\begin{lemma}\label{SF3_small}
  For every finite Abelian group $G$ and every $\delta > 0$, there exist $\eps = \eps(\delta) > 0$ and $C = C(\delta) > 0$ such that 
  $$\left| \Big\{ I \in \SF_\ge^{(\delta)}(G,m) \,\colon\, \lambda(I) \ge (\delta - 1)|I| \Big\} \right| \; \le \; 2^{-\eps m} {{n/2} \choose {m}}$$
  for every $m \ge C\sqrt{n}$.
\end{lemma}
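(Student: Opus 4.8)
The plan is to adapt the Kleitman--Winston encoding argument that underlies Theorems~\ref{thm:graphs} and~\ref{thm:graphs-loc-dense}, in the form of the Main Algorithm of Section~\ref{AlgSec}; the hypothesis $\lambda(I)\ge(\delta-1)|I|$ will play exactly the role that stability played there. Fix a small $\beta=\beta(\delta)>0$ and a large $C=C(\delta,\beta)$, put $d_0=\lceil Cn/m\rceil$, and for $T\subseteq G$ let $\G^*_T$ denote, as in Section~\ref{q=2Sec}, the Cayley graph on $G$ with connection set $T\cup(-T)$; it is $|T\cup(-T)|$-regular with $|T\cup(-T)|\le 2|T|$. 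Two elementary facts drive everything. First, if $T\subseteq I$ then $I$ is an independent set of $\G^*_T$, since $I$ is sum-free, and moreover $2e(\G^*_T[A])\ge\sum_{t\in T}|A\cap(A+t)|$ for every $A\subseteq G$. Second, a \emph{local density} estimate: writing $\lambda(I,\chi)=\Re\sum_{x\in I}\chi(x)$ as in the proof of Lemma~\ref{SF3_big}, the Parseval identity
\[
\big|\{(x,y)\in A\times A:\,x-y\in I\}\big|\;=\;\frac1n\sum_{\chi\in\Gh}\lambda(I,\chi)\,\Big|\sum_{x\in A}\chi(x)\Big|^2
\]
holds for every $A$; isolating $\chit$ (which contributes $m|A|^2/n$) and using $\lambda(I,\chi)\ge\lambda(I)\ge(\delta-1)m$ on the remaining terms (Lemma~\ref{lambda}) together with $\sum_{\chi\ne\chit}|\sum_{x\in A}\chi(x)|^2=n|A|-|A|^2$ gives $\sum_{t\in I}|A\cap(A+t)|\ge\tfrac\delta4\,m|A|$ whenever $|A|\ge(\tfrac12-\tfrac\delta8)n$. (This is precisely Lemma~\ref{lemma:AC} for the directed Cayley graph of $I$.) Consequently, for such $A$, the $d_0$ elements $t\in I$ with the largest $|A\cap(A+t)|$ form a set $T\subseteq I$ for which $\G^*_T[A]$ has average degree at least $\tfrac\delta8|T\cup(-T)|$.

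Before running the algorithm I would dispose of large $m$. If $m\ge(\tfrac12-\tfrac\delta4)n$ then, for $\delta$ small, Kneser's theorem applied to $I+I$ forces the stabiliser $H$ of $I+I$ to have index $2$ with $I+I=H$, whence $I\cap H=\emptyset$ and $I\notin\SF_\ge^{(\delta)}(G,m)$; so the set being counted is empty. Thus I may assume $C\sqrt n\le m<(\tfrac12-\tfrac\delta4)n$, so that $N-m\ge\tfrac\delta8 n$, where $N:=(\tfrac12-\tfrac\delta8)n$.

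The algorithm then mirrors the Main Algorithm. For each $I$ in the collection I maintain a partition $G=S\cup X\cup A$ and a set $T\subseteq S\cap I$ with $|T|=d_0$, keeping the invariant $S\subseteq I\subseteq S\cup A$; initialise $T=S=$ the $d_0$ elements of $I$ smallest in the fixed order on $G$, $A=G\setminus S$, $X=\emptyset$. While $|A|>N$: in \textbf{Case~1}, when $\G^*_T[A]$ has average degree $\ge\tfrac\delta{100}d_0$, perform one Basic-Algorithm step on $\G^*_T[A]$ (select the first $v_i\in I$ in the max-degree order, move it to $S$, and move its predecessors and its $\G^*_T$-neighbours in $A$ to $X$); in \textbf{Case~2}, freeze $A$, list $G$ in decreasing order of $|A\cap(A+t)|$ (ties broken by the fixed order), and go down this list skipping elements of $X$, recording elements of $S$, and observing membership in $I$ for listed elements of $A$ (moving each to $S$ or to $X$), until $d_0$ elements of $I$ have been encountered; let these be the new $T$. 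Output $(S,I\setminus S)$ when $|A|\le N$. The local density estimate guarantees Case~2 always finds its $d_0$ elements of $I$; the heart of the argument — and the main obstacle — is then to verify, in the style of Claims~\ref{MA1}--\ref{MA3}, that (i) each Case~2 step either deletes $\Omega(\delta n)$ vertices from $A$ or leaves $\G^*_T[A]$ with $\ge\tfrac\delta8 d_0|A|$ edges and so forces $\Omega(\delta n)$ further deletions before the next Case~2, giving $O(1/\delta)$ Case~2 steps in all (here one must control the shrinkage of $A$ during the Case~2 listing, using $|A\cap(A+t)|\ge e_t(A')-2|A\setminus A'|$); (ii) between two successive Case~1 steps $A$ loses $\ge\tfrac\delta{100}d_0$ vertices, giving $O(n/\delta d_0)$ Case~1 steps; and, most delicately, (iii) $S$ determines $A$ and $T$ at every stage — the initial $T$ is the set of $d_0$ smallest elements of $S$, each Case~1 choice is the first element of the current $A$ (in the relevant max-degree order) lying in $S$, and in each Case~2 step the stopping point, the new $T$ and the new $A$ are read off from which high-$|A\cap(A+t)|$ elements of $G$ lie in $S$. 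Granting (i)--(iii), the step counts together with the choices of $\beta$ and $C$ yield $|S|\le\beta^2 m$ at termination.

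Since the encoding $I\mapsto(S,\,I\setminus S)$ is injective (as $S$ determines $A$) and $I\setminus S\subseteq A$ with $|A|\le N$, I would conclude
\[
\big|\{I\in\SF_\ge^{(\delta)}(G,m):\lambda(I)\ge(\delta-1)m\}\big|\;\le\;\sum_{t=0}^{\beta^2 m}\binom nt\binom N{m-t},
\]
and finish with the computation used in Claim~\ref{MA5}: by $\binom nt\le(en/t)^t$, by $\binom N{m-t}\le(\tfrac m{N-m})^t\binom Nm$ as in~\eqref{MAeq2}, and by $N-m\ge\tfrac\delta8 n$ (which makes the powers of $n$ cancel), each summand is at most $(\tfrac{8em}{\delta t})^t\binom Nm$, so the sum is at most $\beta^2 m\,(\tfrac{8e}{\delta\beta^2})^{\beta^2 m}\binom Nm$; using $\binom Nm=\binom{(1/2-\delta/8)n}{m}\le(1-\tfrac\delta4)^m\binom{n/2}{m}$ and the fact that $\beta^2\ln(8e/\delta\beta^2)<\delta/8$ for $\beta=\beta(\delta)$ chosen small enough, this is at most $2^{-\eps m}\binom{n/2}{m}$ for a suitable $\eps=\eps(\delta)>0$, once $n$ (hence $m\ge C\sqrt n$) is large. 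The step I expect to be genuinely technical is establishing (i)--(iii) — in particular making the Case~2 bookkeeping precise enough that $S$ reconstructs the run — while the Fourier/local-density input in the first paragraph and the counting in the last are routine.
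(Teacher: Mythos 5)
Your proposal takes a genuinely different route from the paper. The paper's proof is short: choose a \emph{random} $S\subseteq I$ of size $\eps m$ and use Chernoff's inequality (Lemma~\ref{chooseS}) to show that $\lambda(S)\ge(\delta/2-1)|S|$; transfer this to $\lambda(\G^*_S)\ge(\delta/4-1)|S\cup(-S)|$ via Lemma~\ref{SuS}; then apply Theorem~\ref{thm:graphs} as a black box to the $|S\cup(-S)|$-regular graph $\G^*_S$, and finish by multiplying by $\binom{n}{\eps m}$ choices for $S$. You instead propose to rerun a bespoke version of the Main Algorithm of Section~\ref{AlgSec} directly, with the Parseval identity --- which, as you correctly note, is just Lemma~\ref{lemma:AC} applied to the directed Cayley graph of $I$ --- serving as the ``stability'' input in Case~2. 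Both proofs ultimately use the same eigenvalue/Fourier mechanism, but the paper's random-choice-plus-Chernoff step lets it piggyback on the already proved Theorem~\ref{thm:graphs}, whereas your version re-derives a Kleitman--Winston argument from scratch in a harder setting where the reference set $T$ drifts. What you gain is a single deterministic encoding and no appeal to Theorem~\ref{thm:graphs}; what you lose is that the bookkeeping is substantially more delicate (Case~2 now observes $I$-membership along an $A$-dependent priority list, $A$ shrinks during the observation, and the dichotomy ``large deletion or dense $\G^*_T[A]$'' has to be verified).

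The main thing to flag is that you have not actually closed the argument: you state points~(i)--(iii) as the heart of the matter and explicitly leave them unverified. Having worked through them, I believe they can be made to hold --- the pigeonhole from $\sum_{t\in I}|A\cap(A+t)|\ge\tfrac\delta4 m|A|$ does give $\ge\tfrac\delta8 m\gg d_0$ elements $t\in I$ with $|A\cap(A+t)|\ge\tfrac\delta8|A|$, so Case~2 always terminates; the decoding of the Case~2 listing does work because every observed element is irrevocably moved to $S$ or $X$ and $I\cap X=\emptyset$, so $S$-membership at the end reveals the outcome of each observation; the initial $T$ is recoverable as the $d_0$ smallest elements of the final $S$ since later additions to $S$ are all larger in the fixed order; and the Kneser reduction for $m\ge(\tfrac12-\tfrac\delta4)n$ is correct. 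But as written these are a sketch, not a proof, and the omitted verification is precisely the part that is easy to get wrong (e.g., one must make sure that the shrinkage of $A$ during a single Case~2 pass is either small enough to preserve the density of $\G^*_T[A]$, via $|A'\cap(A'+t)|\ge|A\cap(A+t)|-2|A\setminus A'|$, or large enough to count as progress). So the route is sound in outline and would give the lemma, but the paper's proof is considerably simpler and you should be aware that the bulk of the work in your version is in the parts you have deferred.
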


The idea of the proof is as follows: we choose a set $S \subseteq I$ of size $\eps m$ and observe that, since $I$ is sum-free, $I \setminus S$ is an independent set in $\G^*_S$, the Cayley graph of $S$. The key point is that, for some such $S$, our bound on $\lambda(I)$ implies the existence of a non-trivial bound on $\lambda(\G^*_S)$, the smallest eigenvalue of the adjacency matrix of the Cayley graph of $S$. Combined with Theorem~\ref{thm:graphs}, this implies that there are only very few choices for $I \setminus S$, and hence for $I$ itself.

The first step is the following lemma, which shows that our bound on $\lambda(I)$ allows us to find a small set $S$ such that $\lambda(S)/|S|$ is also bounded away from minus one.

\begin{lemma}\label{chooseS}
  If $I \in \SF_\ge^{(\delta)}(G,m)$ satisfies $\lambda(I) \ge (\delta - 1)|I|$, then there exists a set $S \subseteq I$ of size $\eps m$ such that
  \begin{equation}
    \label{eq:lambdaS}
    \lambda(S) \,\ge\, \left( \frac{\delta}{2} - 1 \right)|S|.
 \end{equation}
\end{lemma}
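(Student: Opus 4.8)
The plan is to obtain $S$ by a one‑line probabilistic argument. I would choose $S$ to be a uniformly random subset of $I$ of size $\eps m$ (tacitly assuming $\eps m\in\N$, replacing it by $\lceil\eps m\rceil$ otherwise). By Lemma~\ref{lambda}, the desired bound \eqref{eq:lambdaS} is equivalent to the assertion that
$$\Re\Big(\sum_{s\in S}\chi(s)\Big)\,\ge\,\Big(\tfrac{\delta}{2}-1\Big)|S|\qquad\text{for every }\chi\in\Gh,$$
and I will show that a random $S$ as above satisfies this for all characters simultaneously with positive probability. (Note that $0\notin I$ — a sum-free set contains no solution of $x+y=z$, in particular not $0+0=0$ — so $\lambda(S)$ is well defined.)

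First I would fix a character $\chi\in\Gh$ and set $X_\chi=\Re\big(\sum_{s\in S}\chi(s)\big)=\sum_{s\in S}\Re\chi(s)$. Since $\Pr[x\in S]=\eps$ for each $x\in I$, and since $\lambda(I)=\min_{\chi}\Re\big(\sum_{x\in I}\chi(x)\big)\ge(\delta-1)m$ by hypothesis and Lemma~\ref{lambda}, the first moment of $X_\chi$ satisfies
$$\Ex[X_\chi]\,=\,\eps\sum_{x\in I}\Re\chi(x)\,=\,\eps\,\Re\Big(\sum_{x\in I}\chi(x)\Big)\,\ge\,\eps(\delta-1)m\,=\,(\delta-1)|S|,$$
so the target value $(\tfrac{\delta}{2}-1)|S|$ lies below $\Ex[X_\chi]$ by at least $\tfrac{\delta}{2}|S|=\tfrac{\delta}{2}\eps m$. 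Now $X_\chi$ is a sum of $\eps m$ terms, each in $[-1,1]$, sampled from $I$ without replacement; since the hypergeometric distribution is at least as concentrated as the corresponding binomial, Hoeffding's inequality yields
$$\Pr\Big[X_\chi<\big(\tfrac{\delta}{2}-1\big)|S|\Big]\,\le\,\Pr\Big[X_\chi<\Ex[X_\chi]-\tfrac{\delta}{2}\eps m\Big]\,\le\,\exp\!\big(-c\delta^2\eps m\big)$$
for an absolute constant $c>0$.

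Finally I would take a union bound over the characters of $G$; by Fact~\ref{fact2} there are exactly $n$ of them, so the probability that $X_\chi<(\tfrac{\delta}{2}-1)|S|$ for some $\chi\in\Gh$ is at most $n\exp(-c\delta^2\eps m)$, which is strictly less than $1$ provided $m\ge C\sqrt n$ with $C=C(\delta)$ large enough and $n$ is large. Any $S$ avoiding all these bad events satisfies $\lambda(S)\ge(\tfrac{\delta}{2}-1)|S|$, as required. The main (indeed, the only) delicate point is that the concentration estimate must survive the union bound over all $n$ characters, which forces $m$ to grow faster than $\log n$ — exactly what $m\ge C\sqrt n$ guarantees. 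I also note that the hypothesis $I\in\SF_\ge^{(\delta)}(G,m)$ plays no role in this lemma: the assumption that $I$ meets every index-$2$ subgroup in at least $\delta m$ points is needed only in the complementary regime $\lambda(I)\le(\delta-1)|I|$ treated by Lemma~\ref{SF3_big}.
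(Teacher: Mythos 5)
Your proof is correct and is essentially the same argument the paper gives: choose a uniformly random $\eps m$-subset $S \subseteq I$, observe $\Ex[\lambda(S,\chi)] = \eps\lambda(I,\chi) \ge (\delta-1)|S|$, apply a Chernoff/Hoeffding concentration bound to each character, and take a union bound over the $n$ characters of $G$. Your remarks that sampling without replacement is at least as concentrated as the binomial and that the $\SF_\ge^{(\delta)}$ hypothesis is not used here are accurate but not new content beyond the paper's proof.
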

\begin{proof}
Recall the definition of $\lambda(A,\chi)$ from the proof of Lemma~\ref{SF3_big}. Since $\lambda(I) \ge (\delta - 1)|I|$, it follows from Lemma~\ref{lambda} that $\lambda(I,\chi) \ge (\delta - 1)|I|$ for every $\chi \in \Gh$. Choose a subset $S \subseteq I$ of size $\eps m$ uniformly at random; we claim that $\lambda(S,\chi)$ is tightly concentrated around the mean, i.e., around $\eps\lambda(I,\chi)$. Indeed, by Chernoff's inequality, we have
\[
\Pr\Big( \lambda(S,\chi) \le (\delta/2-1)\eps|I| \Big) \, \le \, e^{- \Omega(m)},
\]
where the implicit constant depends on $\eps$ and $\delta$. There are exactly $n$ characters in $\Gh$, and so, by the union bound, the probability that $S$ does not satisfy~\eqref{eq:lambdaS} is at most $1/2$. Thus there exists a set $S$ as claimed. 
\end{proof}

Next, we show that this bound on $\lambda(S)$ implies a similar bound on $\lambda(\G^*_S)$, the smallest eigenvalue of the adjacency matrix of the Cayley graph of $S$. Recall that the adjacency matrix of $\G^*_S$ is $A\big(S \cup (-S) \big)$, and hence
\[
\lambda\big( \G^*_S \big) \; = \; \lambda\big( S \cup (-S) \big).
\]
We shall use the following lemma, which bounds $\lambda(\G^*_S)$ in terms of $\lambda(S)$.

\begin{lemma}\label{SuS}
Let $0 \not\in S \subseteq G$ and $\delta > 0$. If $\lambda(S) \ge (\delta - 1)|S|$, then 
\[
\lambda\big( \G^*_S \big) \,\ge\, \left( \frac{\delta}{2} - 1 \right) |S \cup (-S)|.
\]
\end{lemma}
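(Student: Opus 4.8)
The plan is to reduce the statement to a pointwise inequality over the characters of $G$ and then finish with elementary counting. By Lemma~\ref{lambda} we have $\lambda(S) = \min_{\chi \in \Gh} \Re\big(\sum_{s \in S} \chi(s)\big)$, and since the adjacency matrix of $\G^*_S$ is $A\big(S \cup (-S)\big)$ (so that $\lambda(\G^*_S) = \lambda\big(S \cup (-S)\big)$, as recorded above), the same lemma gives $\lambda(\G^*_S) = \min_{\chi \in \Gh} \Re\big(\sum_{t \in S \cup (-S)} \chi(t)\big)$. Hence it suffices to fix an arbitrary $\chi \in \Gh$ and prove
\[
\Re\Big(\sum_{t \in S \cup (-S)} \chi(t)\Big) \;\ge\; \Big( \frac{\delta}{2} - 1 \Big)\, |S \cup (-S)|,
\]
knowing that $\Re\big(\sum_{s \in S} \chi(s)\big) \ge \lambda(S) \ge (\delta - 1)|S|$. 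The single fact about characters I would use is that $\chi(-t) = \overline{\chi(t)}$: indeed $\chi(t)\chi(-t) = \chi(0) = 1$ and $|\chi(t)| = 1$ by Fact~\ref{fact1}, so in particular $\Re(\chi(-t)) = \Re(\chi(t))$.

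First I would split $S \cup (-S)$ as the \emph{disjoint} union of $S$ and $(-S) \setminus S$. The map $t \mapsto -t$ carries $(-S) \setminus S$ bijectively onto $S \setminus (-S)$, and combined with $\Re(\chi(-t)) = \Re(\chi(t))$ this yields
\[
\Re\Big(\sum_{t \in S \cup (-S)} \chi(t)\Big) \;=\; \Re\Big(\sum_{s \in S} \chi(s)\Big) \;+\; \Re\Big(\sum_{s \in S \setminus (-S)} \chi(s)\Big).
\]
For the first summand on the right I invoke the hypothesis $\Re\big(\sum_{s\in S}\chi(s)\big) \ge (\delta-1)|S|$; for the second I use only the trivial bound $\Re\big(\sum_{s \in S\setminus(-S)} \chi(s)\big) \ge -|S \setminus (-S)|$, valid since each term has real part at least $-1$.

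It then remains to check the arithmetic. Writing $r = |S \cap (-S)|$, we have $|S\setminus(-S)| = |S| - r$ and $|S \cup (-S)| = 2|S| - r$, so the displayed inequality gives $\Re\big(\sum_{S\cup(-S)}\chi\big) \ge (\delta-1)|S| - (|S|-r) = (\delta-2)|S| + r$, while the target quantity is $(\delta/2-1)(2|S|-r) = (\delta-2)|S| + (1-\delta/2)r$; the difference is $(\delta/2)\,r \ge 0$, which closes the argument. I expect the only genuinely delicate point — and the reason the conclusion carries $\delta/2$ rather than $\delta$ — is exactly this choice of decomposition: if one instead wrote $\sum_{S\cup(-S)}\chi = \sum_S\chi + \sum_{-S}\chi - \sum_{S\cap(-S)}\chi$, the correction term $-\Re\big(\sum_{S\cap(-S)}\chi\big)$ could be as large as $+r$ (when $S$ is almost symmetric), which is too lossy. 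Peeling off only the truly new vertices $(-S)\setminus S$, of which there are $|S|-r$, makes the cost of a possibly asymmetric $S$ precisely the harmless $(\delta/2)r$ slack above, so no extra assumptions on $S$ are needed.
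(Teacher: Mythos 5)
Your proof is correct and follows essentially the same route as the paper's: both decompose $S \cup (-S)$ into the disjoint pieces $S$ and $(-S)\setminus S$, pass to the common character eigenbasis (Lemmas~\ref{lambda} and~\ref{evectors}), bound the contribution of $S$ by the hypothesis and the contribution of the remaining $|S|-r$ elements by the trivial bound $-1$ per term, and finish with the same arithmetic. Your extra observation that $\Re(\chi(-t))=\Re(\chi(t))$ lets you re-index over $S\setminus(-S)$, but since $|S\setminus(-S)|=|(-S)\setminus S|$ this is cosmetic; it does have the small virtue of working with a single fixed $\chi$ throughout, cleanly yielding the inequality $\lambda(S\cup(-S))\ge\lambda(S)+\lambda((-S)\setminus S)$ rather than the equality the paper asserts (which need not hold, as the minima may be attained at different characters — the direction needed is, of course, exactly the one used).
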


\begin{proof}[Proof of Lemma~\ref{SuS}]
By Lemma~\ref{evectors}, the characters of $G$ are a basis of eigenvectors of both $A(S)$ and $A((-S) \setminus S)$. Thus, by Lemma~\ref{lambda},
\begin{eqnarray*}
  \lambda\big( \G^*_S \big) & = & \lambda\big( S \cup (-S) \big) \; = \; \lambda(S) + \lambda\big( (-S) \setminus S \big)\\
  & \ge & ( \delta - 1 ) |S| - |(-S) \setminus S| \; \ge \; \left( \frac{\delta}{2} - 1\right)|S \cup (-S)|,
\end{eqnarray*}
as required. The last inequality follows from the fact that $|(-S) \setminus S| \le |(-S)| = |S|$.
\end{proof}

We can now complete the proof of Lemma~\ref{SF3_small}.

\begin{proof}[Proof of Lemma~\ref{SF3_small}]
Let $I \in \SF_\ge^{(\delta)}(G,m)$ and suppose that $\lambda(I) \ge (\delta - 1)|I|$. By Lemmas~\ref{chooseS} and~\ref{SuS}, there exists a set $S \subseteq I$ with $|S| = \eps m$, such that
\[
\lambda(\G^*_S) \,\ge\, \left( \frac{\delta}{4} - 1 \right) |S \cup (-S)|.
\]
Since $I$ is sum-free, $I \setminus S$ is an independent set in $\G^*_S$. We claim that $\G^*_S$ satisfies the conditions of Theorem~\ref{thm:graphs}. Indeed, $\G^*_S$ is a $d_S$-regular graph on $n$ vertices, where $d_S = |S \cup (-S)|$, and 
$$|I \setminus S| \,=\, \big( 1 - \eps \big) m \,\ge\,  \frac{C(\eps)n}{\eps m} \, \ge\, \frac{C(\eps)n}{|S \cup (-S)|},$$
since $m \ge C\sqrt{n}$. Note also that 
$$\ds\frac{ |\lambda(\G^*_S)| }{ d_S + |\lambda(\G^*_S)| } \, \le \, \frac{ 1 - (\delta/4) }{ 2 - (\delta/4) } \, \le \, \frac{1}{2} - \frac{\delta}{16}.$$ 
Hence, by Theorem~\ref{thm:graphs},
$$I(\G^*_S,(1-\eps)m) \,\le\, {(1/2-\delta/20)n \choose (1-\eps)m},$$
and so
$$\left| \Big\{ I \in \SF_\ge^{(\delta)}(G,m) \,\colon\, \lambda(I) \le (1 - \delta)|I| \Big\} \right| \, \le \, {n \choose \eps m}{(1/2-\delta/20)n \choose (1-\eps)m} \, \le \, 2^{-\eps m} {n/2 \choose m}$$
if $\eps = \eps(\delta) > 0$ is sufficiently small. This proves the lemma.
\end{proof}

Finally, note that Lemmas~\ref{SF3_big} and~\ref{SF3_small} imply Proposition~\ref{prop:SF2}.

\subsection{Proof of Theorem~\ref{q=2}}

First, observe that if $|G|$ is even, then the claimed lower bound on $|\SF(G,m)|$ is a straightforward consequence of the fact that, by Theorem~\ref{thm:SFG-structure}, \( |\SF_0(G)| = \#\big\{ \text{elements of $G$ of order $2$}\big\} \) and that each pair of distinct $B, B' \in \SF_0(G)$ intersects in $|G|/4$ elements. If $|G|$ is odd, then Theorem~\ref{q=2} only gives an upper bound on $|\SF(G,m)|$.

For the upper bound, observe that by Propositions~\ref{prop:SF1} and~\ref{prop:SF2}, we have 
\begin{eqnarray*}
  |\SF(G,m)| & \le & |\SF_\le^{(\delta)}(G,m)| \,+\, |\SF_\ge^{(\delta)}(G,m)|\\
  & \le & \left( \# \big\{ \text{elements of $G$ of order $2$} \big\} +o(1) \right) {{n/2} \choose m} \,+\, 2^{-\eps m} {{n/2} \choose m},
\end{eqnarray*}
for every $m \ge 4 \sqrt{n \log n}$, as required.

\end{document}